\documentclass[a4paper,fleqn]{cas-sc}
\usepackage{mathrsfs}
\usepackage{amsmath,amssymb,amsfonts,amsthm}
\usepackage{overpic}
\usepackage{cases}
\theoremstyle{plain}
\newtheorem{theorem}{Theorem}[section]
\newtheorem{lemma}[theorem]{Lemma}
\newtheorem{coro}[theorem]{Corollary}
\newtheorem{prop}[theorem]{Proposition}
\theoremstyle{definition}

\newtheorem{exam}[theorem]{Example}

\newtheorem{rmk}[theorem]{Remark}
\theoremstyle{definition}

\allowdisplaybreaks
\usepackage{appendix}
\usepackage{subcaption}
\usepackage{rotating}
\usepackage{framed}
\usepackage{tikz-cd}
\usepackage{ulem}
\usepackage[authoryear,longnamesfirst]{natbib}

\def\tsc#1{\csdef{#1}{\textsc{\lowercase{#1}}\xspace}}
\tsc{WGM}
\tsc{QE}
\tsc{EP}
\tsc{PMS}
\tsc{BEC}
\tsc{DE}

\begin{document}
\let\WriteBookmarks\relax
\shorttitle{}
\shortauthors{M. Du et~al.}
\let\printorcid\relax

\title [mode = title]{Stochastic geometrically exact beams: Continuous-time geometric principles and variational discretizations}                    

\author[1]{{Minghang Du}}
\credit{Writing - review \& editing, Visualization, Software, Methodology, Conceptualization}

\author[2]{{Fran\c{c}ois Gay-Balmaz}}
\ead{francois.gb@ntu.edu.sg}
\credit{Writing - review \& editing, Supervision, Conceptualization, Funding acquisition}

\author[3]{{Tianzhi Li}}
\ead{tlee@stu.pku.edu.cn}
\credit{Writing - original draft, Validation, Methodology, Investigation, Software}

\author[1]{{Donghua Shi}}
\cormark[1]
\ead{dshi@bit.edu.cn}
\credit{Writing - review \& editing, Supervision, Funding acquisition, Conceptualization}

\affiliation[1]{organization={School of Mathematics and Statistics, Beijing Key Laboratory on MCAACI, Beijing Institute of Technology}, 
city={Beijing},
citysep={}, 
postcode={102488},
country={China}
}

\affiliation[2]{organization={Division of Mathematical Sciences, Nanyang Technological University}, 
city={Singapore},
citysep={}, 
postcode={637371},
country={Singapore}
}

\affiliation[3]{organization={School of Advanced Manufacturing and Robotics, Peking University}, 
city={Beijing},
citysep={}, 
postcode={100871},
country={China}
}

\cortext[cor1]{Corresponding author}

\begin{abstract}
Geometrically exact beams naturally arise in physical phenomena and engineering practice for modeling flexible structures, such as soft robotic manipulators and DNA structures. To the best of our knowledge, there appear to be no geometric integrators for stochastic geometrically exact beams in a variational formulation. In this paper, we introduce variational principles for the dynamics of stochastic geometrically exact beams, in both continuous- and discrete-time settings. By introducing perturbations via stochastic potentials and stochastic kinematics, we derive the equations of motion from a variational viewpoint. The resulting stochastic geometric integrators simultaneously discretize time and space in a parallel manner, which provides a distributed computational framework for solving the stochastic dynamics of complex beam systems. In particular, the proposed integrators are shown to be a stochastic extension of the well-known deterministic staggered leapfrog scheme for the wave equation, and also a stochastic extension of Yee's scheme for spatially one-dimensional computational electromagnetism. Numerical results are shown to demonstrate the effectiveness of the proposed approaches.
\end{abstract}



\begin{keywords}
Stochastic geometrically exact beams 
\sep 
Variational integrators
\sep 
Structure preservation
\sep 
Stochastic Yee's scheme
\sep
Stochastic staggered leapfrog
\end{keywords}

\maketitle

\section{Introduction}
\textbf{Motivation and related works.} Variational integrators have been a well-known family of geometric integrators that discretize variational principles that determine the equations of motion, rather than discretizing the continuous-time ODEs or PDEs. As such, the resulting numerical schemes are symplectic, preserve important invariants, and exhibit energy conservation over exponentially long time (see \cite{JEM_MW_2001}). For finite-dimensional systems, many remarkable works on variational integrators and related applications have been developed, such as Lie group variational integrators for full-body dynamics \citep{Lee2007}, variational integrators for optimal control of unconstrained/constrained systems \citep{Ober2011,Leyendecker2010,Huang2024}, discrete nonholonomic systems \citep{Zenkov2012,Ball_Zenkov_2015,Chang2019,Simoes2022,Shi2023}, variational discretization of thermodynamic systems \citep{FGB_2018,Cou2020}, variational accelerated optimization \citep{Duruisseaux2021,Duruisseaux2022,Campos2023}, filtering and estimation \citep{Fla2015,Schultz2017,li2023multisymplectic,LW_Filter_2025,li2025structure}, structure-preserving learning \citep{Eldred2024,li2023physics,Eldred2025,LW_2025,BW_AMB_2026}. For infinite-dimensional systems, variational integrators have been widely studied in a wide range of applications. Variational discretizations of fluids have been proposed in \citep{Pavlov2011,Gawlik2011,Gawlik2021}.
A Lie group variational integrator was constructed for geometrically exact beams in \cite{Demoures2015}. In addition, a variational integrator for the infinite-dimensional Chaplygin-Timoshenko sleigh was introduced in \cite{An2020} as a nonholonomic extension of \citep{Wang2016}. More recently, a constraint realization-based Hamel field variational integrator has been proposed for the geometrically exact planar Euler-Bernoulli beam \citep{WSY_2025}.

However, the above variational formulations are mathematical models that depict deterministic phenomena. Mechanical systems are inevitably subject to external noise and uncertainties. Therefore, it is of great interest in various applications to study mechanical systems with stochastic perturbations, such as stochastic geometrically exact beams. This necessitates the extension of deterministic formulations to their stochastic counterparts, in both continuous- and discrete-time settings.

To motivate our variational discretization technique, we begin by reviewing the literature on stochastic variational integrators. For finite-dimensional systems, this direction has been extensively studied, beginning with the seminal work of \cite{NBR_HO_2009}. Stochastic Hamiltonian variational integrators have been proposed in, e.g., \cite{DDH_TT_2018,Kraus2021}. Luesink et al. extended the deterministic Runge--Kutta--Munthe--Kaas methods \citep{luesink2111casimir} to the stochastic setting and constructed a variational integrator that preserves the Lie--Poisson structure. Gay-Balmaz and Wu \citep{MW_FGB_2026} generalized the midpoint integrator on vector spaces to stochastic Lie group integrators based on retraction maps, and they provided a theoretical convergence analysis. In \cite{LGSW_2025_1,LGSW_2025_2}, a stochastic nonholonomic integrator was introduced to simulate finite-dimensional stochastic nonholonomic mechanical systems.

From a field-theoretic or infinite-dimensional perspective, Hong et al. constructed a family of multisymplectic Hamiltonian integrators for stochastic Maxwell equations \citep{HoSu2022}. A structure-preserving discretization was introduced in \cite{HP_2025} for stochastic multisymplectic PDE systems. Additional discretization schemes were developed for stochastic fluid dynamics in \cite{CCHPS_2019,Franken2025,HSS_2026}. 

As a typical example of infinite-dimensional systems, the geometrically exact beam has been a well-known mathematical model for studying flexible mechanical systems with a single spatial variable, such as DNA structures and robot manipulators. In practical applications, beam dynamics are subject to uncertainties and fluctuations that are not captured by a purely deterministic model. However, to the best of our knowledge, neither variational principles for the stochastic geometrically exact beam, nor variational integrators capable of simulating its dynamics have been developed. In this paper, the stochasticity of the beam system will be modeled from the perspectives of both stochastic potentials and stochastic kinematics in a continuous-time variational manner, and further used to construct the associated stochastic variational integrators. Furthermore, we apply our stochastic integrators to the special case of deterministic systems, in order to validate the effectiveness of our approach in the deterministic setting as well. In particular, the proposed integrators exhibit good numerical momentum preservation and almost energy conservation in various examples in the deterministic setting, and Lie group structure preservation even in the stochastic setting.

\begin{figure}[pos=htbp]
\centering
\vspace{1em}
\begin{overpic}[width = 0.9\textwidth]{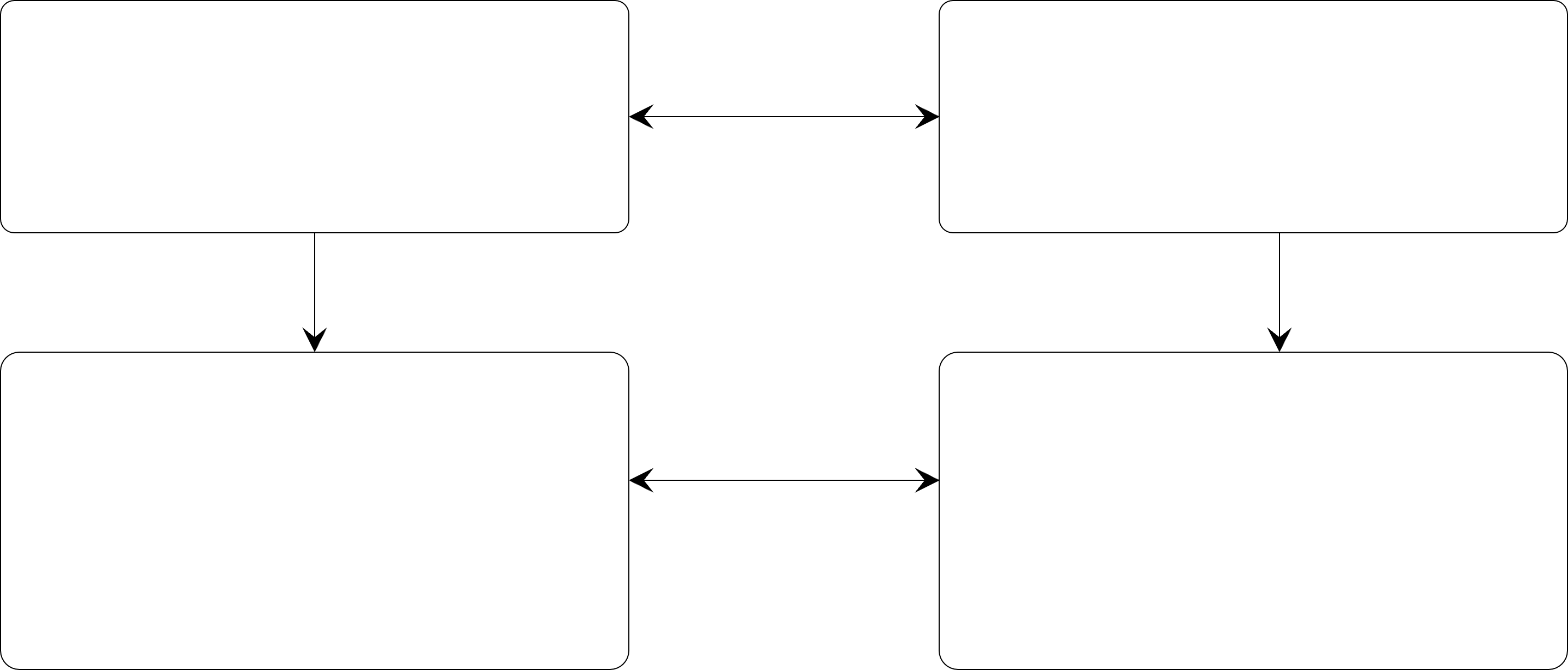}
\put(2,40.3){{\footnotesize \textbf{Stochastic Variational Principle Based on}}}
\put(11,37.8){{\footnotesize \textbf{Stochastic Potential}}}
\put(3,34){{\footnotesize
$
\delta\int_{0}^{S}\int_{a}^{b}
\left[
\ell(g,\zeta,\gamma)dt + \left<\mu,g^{-1}d_{t}g - \zeta dt\right>
\right.
$
}}
\put(3.5,30.5){{\footnotesize
$
\left.
+ \left<\sigma,g^{-1}\partial_{s}g - \gamma\right>dt
- \Phi(g,\mu)\circ dW
\right]ds = 0.
$
}}

\put(42,36.5){{\footnotesize Equivalence Cond.}}
\put(41.8,32.7){{\footnotesize
$
\Phi(g,\mu) = \left<\mu,\Theta\right> + C
$
}}

\put(63,40){{\footnotesize \textbf{Stochastic Variational Principle Based}}}
\put(68.3,37.5){{\footnotesize \textbf{on Stochastic Kinematics}}}
\put(69,33.5){{\footnotesize
$
\delta\int_{0}^{S}\int_{a}^{b}\ell(g,\zeta,\gamma)dtds = 0,
$
}}
\put(70.5,30.5){{\footnotesize
$
g^{-1}d_{t}g = \zeta dt + \Theta\circ dW.
$
}}

\put(4,17){{\footnotesize
$
d_t\mu =
\mathrm{ad}^{*}_{g^{-1}d_tg}\mu
- \partial_{s}\sigma dt
+ T^{*}_{e}L_{g}\frac{\delta\ell}{\delta g}dt
$
}}
\put(9,13){{\footnotesize
$
+ \mathrm{ad}^{*}_{g^{-1}\partial_{s}g}\sigma dt
- T_{e}^{*}L_{g}\frac{\delta\Phi}{\delta g}\circ dW,
$
}}
\put(4,9){{\footnotesize
$
d_t\gamma - \partial_{s}\zeta dt 
= 
\mathrm{ad}_{g^{-1}\partial_{s}g}g^{-1}d_tg
+ \partial_{s}\frac{\delta\Phi}{\delta\mu}\circ dW,
$
}}
\put(4,5.5){{\footnotesize
$
d_tg = g\zeta dt + g\frac{\delta\Phi}{\delta\mu}\circ dW,
\qquad
\partial_{s}g = g\gamma,
$
}}
\put(4,2){{\footnotesize
$
\mu = \delta\ell/\delta\zeta,
\qquad
\sigma = \delta\ell/\delta\gamma.
$
}}

\put(42,13){{\footnotesize Equivalence Cond.}}
\put(43,9.5){{\footnotesize
$
T_{e}^{*}L_{g}\frac{\delta\Phi}{\delta g}(g,\mu) = 0,
$
}}
\put(45,6){{\footnotesize
$
\frac{\delta\Phi}{\delta\mu}(g,\mu) = \Theta.
$
}}

\put(66.5,17){{\footnotesize
$
d_{t}\frac{\delta\ell}{\delta\zeta}
= 
\mathrm{ad}^{*}_{g^{-1}d_{t}g}\frac{\delta\ell}{\delta\zeta}
- \partial_{s}\frac{\delta\ell}{\delta\gamma}dt
$
}}
\put(74,12.5){{\footnotesize
$
+ \mathrm{ad}^{*}_{\gamma}\frac{\delta\ell}{\delta\gamma}dt
+ T^{*}_{e}L_{g}\frac{\delta\ell}{\delta g}dt,
$
}}
\put(66,9){{\footnotesize
$
d_{t}\gamma - \partial_{s}\zeta dt 
= 
\mathrm{ad}_{\gamma}g^{-1}d_{t}g
+ \partial_{s}\Theta\circ dW,
$
}}
\put(66,5.5){{\footnotesize
$
d_{t}g = g\zeta dt + g\Theta\circ dW,
$
}}
\put(66,2){{\footnotesize
$
\partial_{s}g = g\gamma.
$
}}

\put(10.2,24){{\footnotesize
Stochastic
}}
\put(20.5,24){{\footnotesize
Comp. Condition
}}

\put(66.2,24){{\footnotesize
Stochastic Comp.
}}
\put(82.2,24){{\footnotesize
Condition
}}
\end{overpic}
\caption{Variational formulations for stochastic geometrically exact beams.}
\label{continuous}
\end{figure}


\textbf{Goal of this paper}. This work focuses on developing a geometric framework for modeling and computing the dynamics of stochastic geometrically exact beams from a \textit{variational} perspective. Our approach is structured as follows:
\begin{itemize}
  \item[(a)] Introducing stochasticity into the beam system from two different \textit{variational} points of view: stochastic kinematics and stochastic potentials (see Fig.~\ref{continuous}). This variational framework helps to treat stochasticity and dynamics in a unified manner, rather than directly adding noise to the deterministic system.
  \item[(b)] Discretizing the two types of continuous-time variational principles to obtain the associated stochastic geometric integrators, instead of directly discretizing the continuous-time equations of motion. This helps to construct discrete-time dynamics that are physically more reliable.
  \item[(c)] Simulating the stochastic geometrically exact beam using the proposed stochastic numerical integrators. This helps to make it clearer the relation between the proposed methods and existing methods, such as the staggered leapfrog method and the well-known Yee's scheme in computational electromagnetism.
\end{itemize}
In this framework, the key idea is that stochastic perturbations are not directly added into the deterministic system, but are derived from stochastic variational principles, in both continuous- and discrete-time settings. As such, one can interpret the stochasticity from first principles and further analyze the properties of the stochastic beam system. Moreover, the proposed discrete stochastic variational formulations help to make the constructed integrators physically reliable, with structure-preserving properties, such as almost energy conservation, numerical momentum preservation, and Lie group structure preservation. Interestingly, several existing numerical methods, such as the staggered leapfrog method for the wave equation \citep{MoMa2005} and the well-known Yee's scheme for spatially one-dimensional computational electromagnetism \citep{Yee1966}, arise as special cases of the proposed stochastic integrators. These connections will be discussed in detail in the following sections.

\textbf{Organization of the paper}. In Section~\ref{Intro_Hamel}, we review the deterministic field-theoretic variational principle, which serves as the foundation for constructing stochastic variational principles throughout the paper. In Section~\ref{Continuous}, we present two types of field-theoretic stochastic variational principles for geometrically exact beams. The Type-I stochastic variational principle is based on the idea of stochastic kinematics, whereas the Type-II variational principle is based on a stochastic potential. It will be shown that these two variational principles are equivalent under certain conditions. In Section~\ref{Discrete}, we construct two types of stochastic variational integrators corresponding to the continuous-time variational formulations. Furthermore, we show that the two types of integrators are both stochastic extensions of the well-known staggered leapfrog method and Yee's scheme in computational electromagnetism. Finally, in Section~\ref{NR} we numerically demonstrate the effectiveness of the proposed integrators through examples involving the stochastic geometrically exact beam, the stochastic wave equation, and the stochastic flying spaghetti.

\section{Field-theoretic variational principle in Hamel's formalism}\label{Intro_Hamel}
This section presents an overview of the deterministic Hamel's formalism for mechanical field theories that involve a single spatial coordinate. A comprehensive exposition can be found in \cite{Shi_field,An2020}.

\paragraph{The moving frames.} This work studies the stochastic geometrically exact beam from a field-theoretic viewpoint. The physical fields are described by smooth sections of fiber bundles $\pi_{BE}:E\to B$, where the base manifold is spacetime $B = \mathbb{R}\times [0,S]$, with $S$ representing the spatial length of the system (for instance, the length of a geometrically exact beam). Consequently, the total space manifold is
\begin{align*}
E = B\times Q = \mathbb{R}\times [0,S]\times Q,
\end{align*}
where $Q$ is the fiber. The local sections of bundle $\pi_{BE}$ are defined as maps $\phi :U\to Q$, with $U\subset B$ being a subset of the base manifold with nonempty interior. Let $Y$ be an open subset of $Q$. The moving frames are defined to be a family of invertible linear operators
\begin{align*}
\Psi_{q}: V\longrightarrow T_{q}Q
\end{align*}
smoothly dependent on $q\in Y$ (making $Y$ be sufficiently small to guarantee the invertibility of $\Psi_{q}$, if necessary). Here $V$ is a vector space isomorphic to $T_{q}Q$. The dual map $\Psi_{q}^{*}: T_{q}^{*}Q\to V^{*}$ is defined by $\left<\Psi_q^* p,\xi \right> = \left< p,\Psi_q\xi \right>$, where $\left<\cdot,\cdot\right>$ on the right is the pairing between the vector space $T_{q}Q$ and its dual $T^{\ast}_{q}Q$. By construction, the map
\begin{align*}
Y\times V\ni (q,\xi)\mapsto (q,\Psi_q\xi)\in TQ
\end{align*}
is a fiber-preserving diffeomorphism that is linear in the second component. Hence, each $\xi\in V$ defines a smooth vector field $\Psi \xi := \bigcup_{q\in Y}\Psi_{q}\xi$ on $Y$. For any two vectors $\xi,\eta\in V$, an antisymmetric bilinear bracket operation $[\xi,\eta]_{q}$ on $V$ is defined by
\begin{align*}
\Psi_{q}\left[\xi,\eta\right]_{q} = \left[\Psi\xi,\Psi\eta\right]_{J-L}(q),
\end{align*}
where the bracket $\left[\cdot,\cdot\right]_{J-L}$ denotes the standard Jacobi-Lie bracket on $Q$. The Jacobi identity for the Jacobi-Lie bracket and the invertibility of $\Psi_q$ imply that $[\cdot,\cdot]_q$ satisfies the Jacobi identity on $V$. Hence, $(V,[\cdot,\cdot]_q)$ forms a Lie algebra for each $q\in Y$. For maps $\xi,\eta:U\to V$ and $\phi:U\to Y$, define the bracket operation
\begin{align*}
[\xi,\eta]_{\phi}(s,t):= [\xi(s,t),\eta(s,t)]_{\phi(s,t)},
\quad 
(s,t)\in U.
\end{align*}
The dual  bracket $[\xi,\alpha]_{\phi}^{*}$ is defined by $\langle [\xi,\alpha]_{\phi}^{*},\eta\rangle = \langle\alpha,[\xi,\eta]_{\phi}\rangle$, where $\alpha: U\rightarrow V^{*}$. Given a local section $\phi:U\to Y$, define a $V$-valued one-form $\xi := \Psi^{-1}\mathbf{d}\phi$. Equivalently, the form $\mathbf{d}\phi$ can be written as
\begin{align*}
\mathbf{d}\phi = \phi_s\,\mathrm{d}s + \phi_t\,\mathrm{d}t = \Psi_{\phi}\gamma\,\mathrm{d}s + \Psi_{\phi}\zeta\,\mathrm{d}t,
\end{align*}
where $\mathbf{d}$ is the exterior derivative, $\phi_{s}$ and $\phi_{t}$ are partial derivatives of \(\phi\) with respect to \(s\) and \(t\), respectively, and $\zeta$ and $\gamma$ are $V$-valued functions, i.e.,
\begin{align*}
\phi_s = \Psi_{\phi}\gamma,
\quad 
\phi_t = \Psi_{\phi}\zeta,
\quad 
\text{and}
\quad
\xi = \gamma\,\mathrm{d}s + \zeta\,\mathrm{d}t.
\end{align*}
Since variables $\gamma$ and $\zeta$ are derivatives measured using moving frames $\Psi_{q}$, they are
referred to as nonmaterial strain and nonmaterial velocity, respectively.

\paragraph{Hamilton's variational principle in Hamel's formalism.} Let $L:J^{1}E\to\mathbb{R}$ be a Lagrangian, where $J^{1}E$ is the first jet bundle, the space of all 1-jets of local sections of $E$. For a local section \(\phi\) of $\pi_{BE}: B\rightarrow E$, the local representations of the first jet extension reads $(\phi,\mathbf{d}\phi)$, and therefore the Lagrangian reads $L(\phi,\mathbf{d}\phi)$. Using moving frames $\Psi$ and treating $\xi$ as an independent variable, the Lagrangian reads
\begin{equation*}
\ell(\phi,\zeta,\gamma) = \ell(\phi,\xi) := L(\phi,\Psi\xi).
\end{equation*}
Based on this Lagrangian, one obtains the following variational principles for field-theoretic mechanical systems.

\begin{theorem}[\cite{Shi_field}]
\label{vp_hamel}
The following statements are equivalent:
	\begin{enumerate}
		\item[(i)] The map \(\phi\) extremizes the action functional
		\begin{align*}
		    \int_U L(\phi,\mathbf{d}\phi)\,\mathrm{d}s\,\mathrm{d}t,
		\end{align*}
		for all variations \(\delta\phi\) vanishing on \(\partial U\).
        
		\item[(ii)] The map \(\phi\) satisfies the Euler-Lagrange equations
		\begin{align*}
		  \frac{\partial}{\partial t}\frac{\partial L}{\partial\phi_t} + \frac{\partial}{\partial s}\frac{\partial L}{\partial\phi_s} - \frac{\partial L}{\partial\phi} = 0.
		\end{align*}
        
		\item[(iii)] The map \((\phi,\xi)\) extremizess the functional
		\begin{align*}
		  \int_U l(\phi,\xi)\,\mathrm{d}s\,\mathrm{d}t
		\end{align*}
		with respect to variations $\delta\xi$, induced by the variations $\delta\phi = \Psi_{\phi}\eta$, and given by $\delta\xi = \mathbf{d}\eta + [\xi,\eta]_{\phi}$, where $\eta: U\rightarrow V$ vanishes on $\partial U$. 

		\item[(iv)] The map $(\phi,\xi)$ satisfies field-theoretic Hamel's equations
		\begin{align*}
		\frac{\partial}{\partial t}\frac{\partial l}{\partial\zeta}
		+ \frac{\partial}{\partial s}\frac{\partial l}{\partial\gamma}
		- \left[\zeta,\frac{\partial l}{\partial\zeta}\right]_{\phi}^{*}
		- \left[\gamma,\frac{\partial l}{\partial\gamma}\right]_{\phi}^{*}
		- \Psi_{\phi}^{*}\frac{\partial l}{\partial\phi} = 0
		\end{align*}
		coupled with the kinematic equation $\mathbf{d}\phi = \Psi_{\phi}\xi$.
	\end{enumerate}
\end{theorem}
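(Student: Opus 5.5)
I will prove the equivalences along the chain (i)$\Leftrightarrow$(ii), (i)$\Leftrightarrow$(iii), (iii)$\Leftrightarrow$(iv).

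\textbf{(i)$\Leftrightarrow$(ii).} This is the classical first-order field theory computation. I vary $\phi$ by $\delta\phi$ vanishing on $\partial U$ and integrate by parts in $s$ and $t$. The boundary terms drop out, and the fundamental lemma of the calculus of variations gives the Euler--Lagrange equations. Conversely, these equations make the first variation vanish.

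\textbf{(i)$\Leftrightarrow$(iii).} Since $\Psi_\phi$ is invertible, every variation $\delta\phi$ vanishing on $\partial U$ can be written uniquely as $\delta\phi=\Psi_\phi\eta$ with $\eta$ vanishing on $\partial U$. Because $\ell(\phi,\xi)=L(\phi,\Psi_\phi\xi)$ and $\xi=\Psi_\phi^{-1}\mathbf d\phi$, the two action functionals agree along the family. It then suffices to show that the variation of $\xi$ induced by $\delta\phi=\Psi_\phi\eta$ is
\[
\delta\xi=\mathbf d\eta+[\xi,\eta]_\phi .
\]
This is the \emph{key step}. I would take a two-parameter deformation $\phi_\epsilon$ and differentiate $\mathbf d\phi_\epsilon=\Psi_{\phi_\epsilon}\xi_\epsilon$ in $\epsilon$, doing the $t$-component (with $\zeta$) and the $s$-component (with $\gamma$) separately. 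For the $t$-component, equality of mixed partials gives $\partial_\epsilon\partial_t\phi=\partial_t\partial_\epsilon\phi$, that is,
\[
\partial_\epsilon(\Psi_\phi\zeta)=\partial_t(\Psi_\phi\eta).
\]
Expanding both sides by the product rule produces $\Psi_\phi\delta\zeta$ and $\Psi_\phi\partial_t\eta$, plus derivative-of-$\Psi$ terms $D\Psi(\Psi\eta)\zeta-D\Psi(\Psi\zeta)\eta$. In local coordinates on $Y$ these extra terms are exactly the Jacobi--Lie bracket $[\Psi\zeta,\Psi\eta]_{J-L}$ of the frame vector fields evaluated at $\phi$. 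By definition of the bracket this equals $\Psi_\phi[\zeta,\eta]_\phi$. The identity holds pointwise with $\zeta,\eta$ frozen, since the $\partial_t\eta$ contributions are already separated out. Applying $\Psi_\phi^{-1}$ gives
\[
\delta\zeta=\partial_t\eta+[\zeta,\eta]_\phi,
\]
and the identical argument in $s$ gives the $\gamma$-component. The main obstacle is bookkeeping the sign and ordering conventions of the bracket. I would fix these by checking against the left-invariant case on a Lie group, where the formula should reduce to $\delta\zeta=\dot\eta+\mathrm{ad}_\zeta\eta$.

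\textbf{(iii)$\Leftrightarrow$(iv).} I compute
\[
\delta\int_U\ell=\int_U\Big\langle \tfrac{\partial \ell}{\partial\phi},\Psi_\phi\eta\Big\rangle+\Big\langle\tfrac{\partial\ell}{\partial\zeta},\partial_t\eta+[\zeta,\eta]_\phi\Big\rangle+\Big\langle\tfrac{\partial\ell}{\partial\gamma},\partial_s\eta+[\gamma,\eta]_\phi\Big\rangle .
\]
In this expression I use $\Psi^*_\phi$ for the first term and the dual bracket for the bracket terms. Integrating by parts in $t$ and $s$, with $\eta|_{\partial U}=0$, gives
\[
\int_U\Big\langle -\partial_t\tfrac{\partial\ell}{\partial\zeta}-\partial_s\tfrac{\partial\ell}{\partial\gamma}+[\zeta,\tfrac{\partial\ell}{\partial\zeta}]^*_\phi+[\gamma,\tfrac{\partial\ell}{\partial\gamma}]^*_\phi+\Psi^*_\phi\tfrac{\partial\ell}{\partial\phi},\ \eta\Big\rangle .
\]
Since $\eta$ is arbitrary, this vanishes if and only if Hamel's equations hold. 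The kinematic equation $\mathbf d\phi=\Psi_\phi\xi$ holds by the definition of $\xi$, so (iii) and (iv) are equivalent.
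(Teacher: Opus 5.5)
Your proposal is correct and is the standard argument. The paper does not reprove this result and only quotes it from Shi et al. Still, your route (derive $\delta\xi=\mathbf{d}\eta+[\xi,\eta]_\phi$ from the commutation of the variation with $\partial_t,\partial_s$, then integrate by parts against $\eta$) is exactly what the paper itself does in the Lie-group setting of Section~\ref{Hamel_HP}; compare \eqref{lhs} and \eqref{delta_gamma}. The sign you flagged also comes out right directly: with $[X,Y]^i=X^j\partial_jY^i-Y^j\partial_jX^i$, the $D\Psi$ terms moved to the right-hand side are $D\Psi(\Psi\zeta)\eta-D\Psi(\Psi\eta)\zeta=[\Psi\zeta,\Psi\eta]_{J-L}$, which for $\Psi_g=T_eL_g$ reproduces $\delta\zeta=\dot\eta+\mathrm{ad}_\zeta\eta$.
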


\paragraph{Compatibility condition.} 
It is noted that Theorem~\ref{vp_hamel} is not about solving either Euler-Lagrange or Hamel's equations. When seeking the solutions, an additional condition is required to ensure that the solution sets of field-theoretic Hamel's equations and the Euler-Lagrange equations are the same. Indeed, the kinematic equation  defines a generically nonintegrable distribution. Thus, to ensure the integrability of the kinematic equation, one has to use the compatibility condition
\begin{align*}
d\xi + [\xi, \xi]_{\phi} = 0,
\end{align*}
where $\xi$ is a $V$-valued one-form and \([\xi, \xi]_{\phi}\) is the two-form defined by $[\xi, \xi]_{\phi}(u_1, u_2) := [\xi u_1, \xi u_2]_{\phi}$.

\section{Stochastic variational principles for geometrically exact beams}\label{Continuous}
Based on the field-theoretic viewpoint above, the covariant configuration bundle of a geometrically exact beam is the fiber bundle $\pi_{E}: E = B\times SE(3)\rightarrow B$, where $B=\mathbb{R}\times[0,S]$ is the base manifold consisting of time and space, with $S$ representing the length of the undeformed beam. The configuration of the beam is the space of local sections of $\pi_{E}$, that is,
\begin{align}
\mathcal{H}=
\left\{g: B\rightarrow E \bigg|
g(t,s) =
\left(
  \begin{array}{cc}
    R(t,s) & p(t,s) \\
    0 & 1 \\
  \end{array}
\right)\in SE(3)
\right\},
\end{align}
where $R(t,s)$ and $p(t,s)$ represent rotation matrices and translation vectors, respectively. The Lagrangian of the geometrically exact beam is the kinetic energy minus the potential energy, that is,
\begin{equation}
\ell\left(g,\zeta,\gamma\right) 
= 
\frac{1}{2}
\left(
\left<K\zeta,\zeta\right> - \left<P\gamma,\gamma\right>
\right),
\label{beam_lag}
\end{equation}
where $\zeta,\gamma\in\mathfrak{se}(3)$ are the velocity and the strain of the beam measured by moving frames $T_{e}L_{g}: \mathfrak{se}(3)\rightarrow T_{g}SE(3)$, respectively. With these moving frames, the inertial coordinates $\left\{E_{1}, E_{2}, E_{3}\right\}$ are transformed to body frames $\left\{e_{1}(t,s), e_{2}(t,s), e_{3}(t,s)\right\}$ for $(t,s)\in B$ (see Fig.~\ref{beam-pic}). The matrices
\begin{equation*}
K = \mathrm{diag}\{J,M\}
\quad
\text{and}
\quad
P = \mathrm{diag}\{C_{1},C_{2}\},
\end{equation*}
with $M=\rho AI_{3\times 3}$, $J$ being the inertia matrix, $C_{1} = \mathrm{diag}\{EI,2GI,EI\}$, $C_{2} = \mathrm{diag}\{GA,EA,GA\}$,
where $\rho$ is the mass density of the beam, $I_{3\times 3}$ is the identity matrix, $I$ is the principal inertial axis of the cross section, $E$ is Young's modulus, $G=E/2(1+\nu)$ is the shear modulus, and $\nu$ is the Poisson's ratio of the beam. See \cite{Shi_field} for the derivation of the Lagrangian of the geometrically exact beam.
\begin{figure}[pos=t]
\centering
\begin{overpic}[width = 0.45\textwidth]{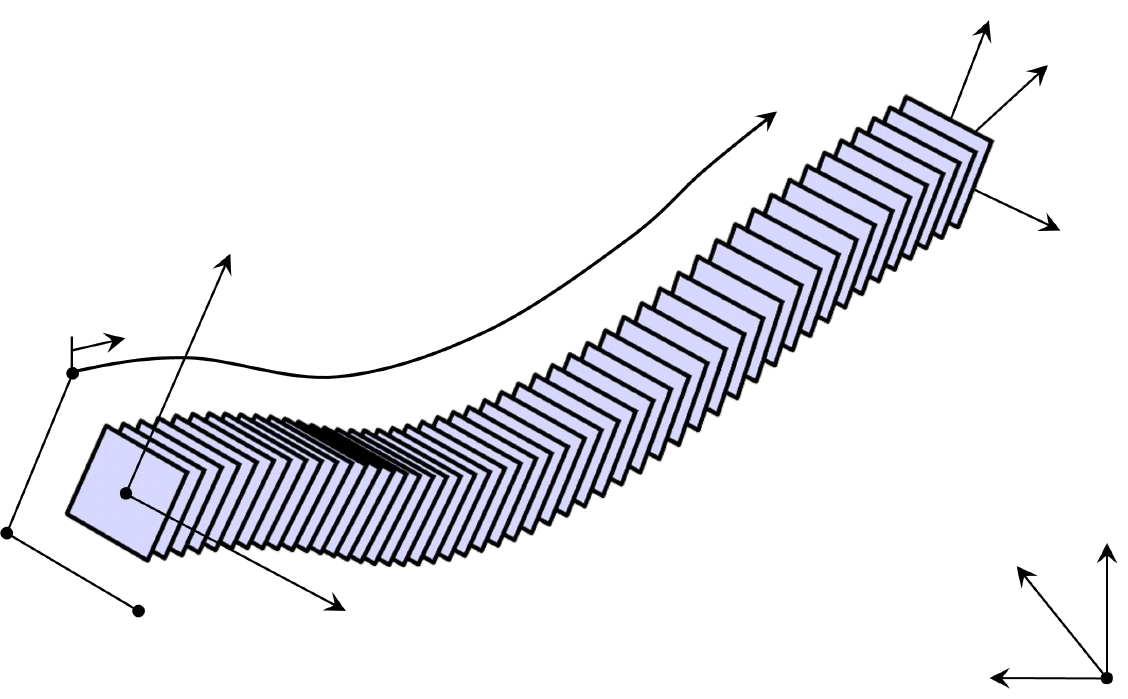}
\put(6,33){$s=0$}
\put(65,53){$s=S$}
\put(-1,22){$a$}
\put(3,7.5){$a$}
\put(85.2,13.8){$E_{1}$}
\put(82,1){$E_{2}$}
\put(95,15.5){$E_{3}$}
\put(28,4.5){$e_{1}(t,0)$}
\put(16,42){$e_{3}(t,0)$}
\put(81,62.5){$e_{3}(t,S)$}
\put(94,56){$e_{2}(t,S)$}
\put(93,38){$e_{1}(t,S)$}
\end{overpic}
\caption{Kinematics of a geometrically exact beam.}
\label{beam-pic}
\vspace{-1em}
\end{figure}


\subsection{Stochasticity introduced by kinematics}\label{Hamel_HP}
In practical applications, the beam dynamics are subject to uncertainties and fluctuations that are not captured by a purely deterministic model. The first way of introducing stochasticity to the beam system is to introduce stochastic perturbations directly into kinematic equations, so that the form of perturbations looks like what one encounters in Euclidean space. Recall that the deterministic kinematic equations of a beam read
\begin{equation}
g^{-1}\dot{g} = \zeta
\quad
\text{and}
\quad
g^{-1}g' = \gamma,
\label{det_kine}
\end{equation}
where $\dot{g}$ and $g'$ represent taking the temporal and spatial derivatives, respectively, and $\zeta$ and $\gamma$ are the convective velocity and constrain. Hence, the stochastic counterpart of the temporal kinematic equation reads
\begin{equation}
\label{tem_kin}
g^{-1}d_{t}g = \zeta dt + \Theta\circ dW,
\end{equation}
where $\Theta:B=\mathbb{R}\times[0,S]\rightarrow \mathfrak{se}(3)$ is a $\mathfrak{se}(3)$-valued vector that characterizes diffusivity of the stochastic process, the symbol $\circ$ implies that the integral is in the Stratonovich sense, and $W$ is a $L^{2}(0,S)$-valued Q-Wiener process. The spatial kinematic equation reads
\begin{equation}
\label{spa_kin}
g' = g\gamma.
\end{equation}
In order to present our variational approach, we make the following preparations. Taking the variation of the left-hand side of \eqref{tem_kin} yields
\begin{equation}
\begin{aligned}
\delta\left(g^{-1}d_{t}g\right) &= - g^{-1}\delta g g^{-1}d_{t}g + g^{-1}\delta\left(d_{t}g\right) \\
&= -\eta g^{-1}d_{t}g + d_{t}\left(g^{-1}\delta g\right) - d_{t}\left(g^{-1}\right)\delta g \\
&= d_{t}\eta 
+ \mathrm{ad}_{g^{-1}d_{t}g}\eta,
\label{lhs}
\end{aligned}
\end{equation}
where $\eta = g^{-1}\delta g$ and $\mathrm{ad}_{\xi}: \mathfrak{se}(3)\rightarrow\mathfrak{se}(3)$ is the adjoint action. The variation of the right-hand side of \eqref{tem_kin} reads
\begin{equation}
\begin{aligned}
\delta \left(\zeta dt + \Theta\circ dW\right)
= \delta\zeta dt.
\label{rhs}
\end{aligned}
\end{equation}
Combining \eqref{lhs} and \eqref{rhs}, we obtain the constrained variation formula
\begin{equation}
\begin{aligned}
d_{t}\eta
=
\delta\zeta dt
- \mathrm{ad}_{g^{-1}d_{t}g}\eta.
\label{tem}
\end{aligned}
\end{equation}
Similarly, equation \eqref{spa_kin} implies
\begin{equation}
\delta\gamma = \delta\left(g^{-1}g'\right) 
= \partial_{s}\eta + 
\mathrm{ad}_{\gamma}\eta,
\label{delta_gamma}
\end{equation}
where $\partial_{s} = \frac{\partial}{\partial s}$ represents taking the spatial derivative. Define the action functional associated to the Lagrangian \eqref{beam_lag} as
\begin{equation}
\label{action_functional}
\mathcal{A} = \int_{0}^{S}\int_{a}^{b}\ell(g,\zeta,\gamma)dtds,
\end{equation}
which is considered on the following space of curves 
\begin{equation*}
\mathcal{C} = \left\{(g,\zeta,\gamma)|g^{-1}d_{t}g = \zeta dt + \Theta\circ dW, \, g' = g\gamma\right\}.
\end{equation*}
Using the constrained variations \eqref{tem} and \eqref{delta_gamma}, we have
\begin{eqnarray*}
\delta\mathcal{A} &=& \delta\int_{0}^{S}\int_{a}^{b}\ell(g,\zeta,\gamma)dtds \\
&=& \int_{0}^{S}\int_{a}^{b}\left[\left<\frac{\delta\ell}{\delta g},\delta g\right> + \left<\frac{\delta\ell}{\delta\zeta},\delta\zeta\right> + \left<\frac{\delta\ell}{\delta\gamma},\delta\gamma\right>\right]dtds  \\
&=& \int_{0}^{S}\int_{a}^{b}\left[\left<T^{*}_{e}L_{g}\frac{\delta\ell}{\delta g}, \eta\right>dt + \left<\frac{\delta\ell}{\delta\zeta}, d_{t}\eta
+ \mathrm{ad}_{g^{-1}d_{t}g}\eta
\right> \right]ds
+ \int_{0}^{S}\int_{a}^{b}\left<\frac{\delta\ell}{\delta\gamma}, \partial_{s}\eta 
+ \mathrm{ad}_{\gamma}\eta
\right> dtds \\
&=&  \int_{0}^{S}\int_{a}^{b}\left<
-d_{t}\frac{\delta\ell}{\delta\zeta}
+ \mathrm{ad}^{\ast}_{g^{-1}d_{t}g}\frac{\delta\ell}{\delta\zeta}
- \frac{\partial}{\partial s}\frac{\delta\ell}{\delta\gamma}dt,
\eta\right> ds
+
\int_{0}^{S}\int_{a}^{b}\left<
\mathrm{ad}^{\ast}_{\gamma}\frac{\delta\ell}{\delta\gamma}
+ T^{*}_{e}L_{g}\frac{\delta\ell}{\delta g},
\eta
\right>dtds \\
&&\quad +
\underbrace{
\int_{0}^{S}\left<\frac{\delta\ell}{\delta\zeta},\eta\right>\bigg|_{t=a}^{t=b}ds
+ \int_{a}^{b}\left<\frac{\delta\ell}{\delta\gamma},\eta\right>\bigg|_{s=0}^{s=S}dt
}_{\text{boundary terms}}.
\end{eqnarray*}
By the arbitrariness of $\eta$, we obtain the stochastic equations of motion of the stochastic beam
\begin{equation*}
\begin{aligned}
d_{t}\frac{\delta\ell}{\delta\zeta}
- \mathrm{ad}^{*}_{g^{-1}d_{t}g}\frac{\delta\ell}{\delta\zeta}
+ \frac{\partial}{\partial s}\frac{\delta\ell}{\delta\gamma}dt
- \mathrm{ad}^{*}_{\gamma}\frac{\delta\ell}{\delta\gamma}dt
- T^{*}_{e}L_{g}\frac{\delta\ell}{\delta g}dt = 0,
\end{aligned}
\end{equation*}
together with the boundary condition
\begin{align*}
\frac{\delta\ell}{\delta\gamma}\bigg|_{s=0}
=
\frac{\delta\ell}{\delta\gamma}\bigg|_{s=S}
= 0.
\end{align*}
To look for solutions of equations of motion, we need to establish the stochastic counterpart of the compatibility condition. The idea is to use the commutativity of the temporal and spatial evolutions. Specifically, taking the partial derivative of \eqref{tem_kin} with respect to the spatial variable $s$ yields
\begin{equation}
-g^{-1}\partial_{s}gg^{-1}d_{t}g + g^{-1}\frac{\partial}{\partial s}d_{t}g
= \partial_{s}\zeta dt + \partial_{s}\Theta\circ dW.
\label{eq1}
\end{equation}
Similarly, taking the stochastic differential $d_{t}$ of \eqref{spa_kin} gives
\begin{equation}
-g^{-1}d_{t}gg^{-1}\partial_{s}g + g^{-1}d_{t}\frac{\partial}{\partial s}g = d_{t}\gamma.
\label{eq2}
\end{equation}
Since the partial derivative $\frac{\partial}{\partial s}$ and the stochastic derivative $d_{t}$ commute, the above \eqref{eq1} and \eqref{eq2} imply
\begin{equation*}
d_{t}\gamma - \partial_{s}\zeta dt
= \mathrm{ad}_{\gamma}g^{-1}d_{t}g
+ \partial_{s}\Theta\circ dW,
\end{equation*}
which is the \textit{stochastic compatibility condition}. Finally, the full dynamics of the stochastic geometrically exact beam reads
\begin{numcases}{}
\label{sto_Hamel}\text{\sc Stochastic field-theoretic equation:} &  \\
\hspace{0.8em}
d_{t}\frac{\delta\ell}{\delta\zeta}
= \mathrm{ad}^{*}_{g^{-1}d_{t}g}\frac{\delta\ell}{\delta\zeta}
- \frac{\partial}{\partial s}\frac{\delta\ell}{\delta\gamma}dt
+ \mathrm{ad}^{\ast}_{\gamma}\frac{\delta\ell}{\delta\gamma}dt
+ T^{*}_{e}L_{g}\frac{\delta\ell}{\delta g}dt,  & \notag \\
\label{sto_tem}\text{\sc Stochastic temporal kinematic equation:}&  \\
\hspace{0.8em} d_{t}g = g\left(\zeta dt + \Theta\circ dW\right),  & \notag \\
\label{sto_spa}\text{\sc Stochastic spatial kinematic equation:}&  \\
\hspace{0.8em} \partial_{s}g = g\gamma,  & \notag\\
\label{sto_com}\text{\sc Stochastic compatibility condition:}&  \\
\hspace{0.8em}
d_{t}\gamma - \partial_{s}\zeta dt = 
\mathrm{ad}_{\gamma}g^{-1}d_{t}g
+ \partial_{s}\Theta\circ dW.  & \notag
\end{numcases}
Equivalently, the above \eqref{sto_Hamel}-\eqref{sto_com} can be written in the following SPDE form
\begin{equation}
\begin{aligned}
d_{t}
\left(
  \begin{array}{c}
    g \\
    \frac{\delta\ell}{\delta\zeta} \\
    \gamma \\
  \end{array}
\right)
=
\left(
  \begin{array}{c}
    g\zeta \\
    \mathrm{ad}^{*}_{\zeta}\frac{\delta\ell}{\delta\zeta}
    + \mathrm{ad}^{*}_{\gamma}\frac{\delta\ell}{\delta\gamma}
    - \partial_{s}\frac{\delta\ell}{\delta\gamma} + T_{e}^{\ast}L_{g}\frac{\delta\ell}{\delta g} \\
    \partial_{s}\zeta 
    + \mathrm{ad}_{\gamma}\zeta  \\
  \end{array}
\right)dt
+
\left(
  \begin{array}{c}
    g\Theta \\
    \mathrm{ad}^{*}_{\Theta}\frac{\delta\ell}{\delta\zeta}  \\
    \partial_{s}\Theta 
    + \mathrm{ad}_{\gamma}\Theta  \\
  \end{array}
\right)\circ dW,
\end{aligned}
\end{equation}
together with the spatial kinematic equation $\partial_{s}g=g\gamma$.

The above variational formulation is motivated by geometrically exact beams, while in fact the proposed approach applies to stochastic field-theoretic mechanical systems $\pi_{BE}:B\times G\rightarrow B$ with Lie-group-valued fiber $G$. One representative example is the wave equation, as shown in the following.
\begin{exam}[Stochastic wave equation]
Consider the linear wave equation, with state  $\phi\in\mathbb{R}$, $t\in\mathbb{R}$ being the temporal variable and $s\in[0,S]$ being the spatial variable. The covariant configuration bundle is therefore $\pi_{BE}: B\times G\rightarrow B$, where the base space $B=\mathbb{R}\times[0,S]$ and the fiber $G=\mathbb{R}$. The Lagrangian of the wave motion is the kinetic energy minus the potential energy
\begin{equation}
\ell(g,\zeta,\gamma) = \frac{1}{2}\left(\zeta^{2} - \gamma^{2}\right).
\label{L_1}
\end{equation}
Similar to the geometrically exact beam, the fiber of this system is also a Lie group, but the trivial one $Q=\mathbb{R}$. By selecting moving frames to be the identity map $T_{e}L_{\phi} = \mathrm{Id}_{\mathbb{R}}$, the operators $\mathrm{ad}$ and $\mathrm{ad}^{*}$ are trivial, that is, $\mathrm{ad}_{\zeta} = 0$ and $\mathrm{ad}^{*}_{\zeta} = 0$. Hence, the stochastic equations of motion \eqref{sto_Hamel}-\eqref{sto_com} for the wave system read
\begin{equation}
\begin{aligned}
d_{t}\zeta &= \gamma_{s}dt, \\
d_{t}\phi &= \zeta dt + \Theta\circ dW,  \\
\phi_{s} &= \gamma,  \\
d_{t}\gamma &= \zeta_{s}dt + \Theta_{s}\circ dW,
\label{wave1}
\end{aligned}
\end{equation}
where $\Theta: \mathbb{R}\times[0,S]\rightarrow\mathbb{R}$ is the diffusivity vector field, and $(\cdot)_{s} = \partial/\partial s$ represents taking the spatial derivative. If $\Theta$ is chosen to be zero, the above equations \eqref{wave1} specialize to
\begin{align*}
\zeta_{t} = \gamma_{s}, 
\quad
\phi_{t} = \zeta, 
\quad
\phi_{s} = \gamma, 
\quad
\gamma_{t} = \zeta_{s},
\end{align*}
which recovers the deterministic wave equation $\left(-\frac{\partial^{2}}{\partial t^{2}} + \triangle\right)\phi = 0$ in \cite{evans2022partial}.
\end{exam}

\subsection{Stochasticity introduced by potential functions}\label{Hamel_HPH}
An alternative way of introducing the stochasticity is to use a stochastic potential function to construct a mixed variational principle. The first way of introducing stochasticity via kinematics provides us with the viewpoint of treating perturbations in a \textit{local} manner, which results from a Euclidean-like stochastic kinematic equation \eqref{tem_kin}. While the second way of introducing stochasticity in this section describes the equations of motion under the influence of a stochastic potential function. Specifically, we define the action functional as
\begin{equation}
\begin{aligned}
\label{action_fun}
\mathcal{A} &= \int_{0}^{S}\int_{a}^{b}
\left[
\ell(g,\zeta,\gamma)dt + \left<\mu,g^{-1}d_{t}g - \zeta dt\right>  
+ \left<\sigma,g^{-1}\partial_{s}g - \gamma\right>dt
- \Phi(g,\mu)\circ dW
\right]ds,
\end{aligned}
\end{equation}
where $W$ is an $L^{2}(0,S)$-valued Q-Wiener process, $\zeta,\gamma\in\mathfrak{se}(3)$ and $\mu,\sigma\in\mathfrak{se}^*(3)$ are multipliers that impose the stochastic kinematic constraints. The stochasticity is described by the stochastic potential $\Phi:SE(3)\times\mathfrak{se}^{*}(3)\rightarrow\mathbb{R}$. Taking the variation of \eqref{action_fun} yields
\begin{align*}
\delta\mathcal{A} &= \int_{0}^{S}\int_{a}^{b}\left[\left(\left<\frac{\delta\ell}{\delta g},\delta g\right> + \left<\frac{\delta\ell}{\delta \zeta},\delta \zeta\right> + \left<\frac{\delta\ell}{\delta \gamma},\delta \gamma\right>\right)dt 
+ \left<\delta\mu, g^{-1}d_tg - \zeta dt\right>
\right. \\
&\left. \qquad\qquad
+ \left<\mu, d_t\eta + \mathrm{ad}_{g^{-1}d_tg}\eta - \delta\zeta dt\right> 
+ \left<\delta\sigma, g^{-1}\partial_{s}g - \gamma \right>dt
+ \left<\sigma, \partial_{s}\eta + \mathrm{ad}_{g^{-1}\partial_{s}g}\eta - \delta\gamma \right>dt
\right.\\
&\qquad\qquad \left.
- \left(
\left<\frac{\delta\Phi}{\delta g}, \delta g\right>
+ \left<\frac{\delta\Phi}{\delta \mu}, \delta\mu\right>
\right)\circ dW\right]ds \\
&= \int_{0}^{S}\int_{a}^{b}
\left[
\left<
T^{\ast}_{e}L_{g}\frac{\delta\ell}{\delta g}
- d_{t}\mu
+ \mathrm{ad}^{*}_{g^{-1}d_{t}g}\mu
- \sigma_{s}
+ \mathrm{ad}^{*}_{g^{-1}\partial_{s}g}\sigma
,
\eta
\right>dt 
- \left<
T_{e}^{*}L_{g}\frac{\delta\Phi}{\delta g},
\eta\right>
\circ dW
\right. \\
&\qquad\qquad \left. + 
\left(
\left<
\frac{\delta\ell}{\delta\zeta}
- \mu,
\delta\zeta
\right>
+
\left<
\frac{\delta\ell}{\delta\gamma}
- \sigma,
\delta\gamma
\right>
\right)dt
+ 
\left<
\delta\mu,
g^{-1}d_{t}g - \zeta dt
\right>
- \left<\delta\mu, \frac{\delta\Phi}{\delta\mu}\right>\circ dW
\right] ds \\
&\quad  + \int_{0}^{S}\int_{a}^{b}
\left<
\delta\sigma,
g^{-1}\partial_{s}g - \gamma
\right>dtds 
+
\underbrace{
\int_{0}^{S}\left<\mu,\eta\right>\Big|_{t=a}^{t=b}ds
+ \int_{a}^{b}\left<\sigma,\eta\right>\Big|_{s=0}^{s=S}dt
}_{\text{boundary terms}}.
\end{align*}
By the boundary condition $\eta(a,s) = \eta(b,s) = 0$, for $s\in [0,S]$, and the arbitrariness of variations $\delta\zeta$, $\delta\gamma$, $\delta\mu$, and $\delta\sigma$, we obtain the stochastic equations of motion
\begin{numcases}{}
\text{\sc Stochastic implicit field-theoretic equation:} & \label{e8}\\
\hspace{0.8em} d_t\mu = \mathrm{ad}^{*}_{g^{-1}d_tg}\mu
- \partial_{s}\sigma dt
+ T^{*}_{e}L_{g}\frac{\delta\ell}{\delta g}dt
+ \mathrm{ad}^{*}_{g^{-1}\partial_{s}g}\sigma dt
- T_{e}^{*}L_{g}\frac{\delta\Phi}{\delta g}\circ dW, & \notag \\
\text{\sc Stochastic temporal kinematic equation:} & \label{e4}\\
\hspace{0.8em} d_tg = g\zeta dt + g\frac{\delta\Phi}{\delta\mu}\circ dW, & \notag\\
\text{\sc Stochastic spatial kinematic equation:} & \label{e5}\\
\hspace{0.8em} \partial_{s}g = g\gamma, & \notag \\
\text{\sc Stochastic temporal Legendre transformation:} & \label{e6}\\
\hspace{0.8em} \mu = \frac{\delta\ell}{\delta\zeta}, & \notag \\
\text{\sc Stochastic spatial Legendre transformation:} & \label{e7}\\
\hspace{0.8em} \sigma = \frac{\delta\ell}{\delta\gamma}. & \notag
\end{numcases}

In order to derive the stochastic compatibility condition, we take the spatial derivative and the temporal stochastic derivative of \eqref{e4} and \eqref{e5}, respectively, which yields
\begin{align}
\label{com1}
&- g^{-1}\partial_{s}gg^{-1}d_tg + g^{-1}\partial_{s}(d_tg)
- \partial_{s}\zeta dt
- \partial_{s}\left(\frac{\delta\Phi}{\delta\mu}\right)\circ dW = 0,\\
\label{com2}
&-g^{-1}d_{t}gg^{-1}\partial_{s}g + g^{-1}d_{t}(\partial_{s}g)
- d_{t}\gamma = 0.
\end{align}
Combining \eqref{com1} and \eqref{com2} gives the stochastic compatibility condition
\begin{eqnarray}
\label{e9}
d_t\gamma - \partial_{s}\zeta dt = \mathrm{ad}_{g^{-1}\partial_{s}g}g^{-1}d_tg
+ \partial_{s}\left(\frac{\delta\Phi}{\delta\mu}\right)\circ dW.
\end{eqnarray}
Equivalently, the above \eqref{e8}-\eqref{e7} and \eqref{e9} can be written in the following SPDE form
\begin{equation*}
\begin{aligned}
d_{t}
\left(
  \begin{array}{c}
    g \\
    \mu \\
    \gamma \\
  \end{array}
\right)
=
\left(
  \begin{array}{c}
    g\zeta \\
    \mathrm{ad}^{*}_{\zeta}\mu 
    + \mathrm{ad}^{*}_{\gamma}\sigma 
    - \partial_{s}\sigma 
    + T_{e}^{\ast}L_{g}\frac{\delta\ell}{\delta g} \\
    \partial_{s}\zeta + \mathrm{ad}_{\gamma}\zeta  \\
  \end{array}
\right)dt
+
\left(
  \begin{array}{c}
    g\frac{\delta\Phi}{\delta\mu} \\
    \mathrm{ad}^{*}_{\delta\Phi/\delta\mu}\mu
    - T_{e}^{\ast}L_{g}\frac{\delta\Phi}{\delta g} \\
    \mathrm{ad}_{\gamma}\frac{\delta\Phi}{\delta\mu}
    + \partial_{s}\left(\frac{\delta\Phi}{\delta\mu}\right) \\
  \end{array}
\right)\circ dW,
\end{aligned}
\end{equation*}
together with the intrinsic relations \eqref{e5}-\eqref{e7}.

\begin{rmk}
\label{equi_continuous}
The first and the second way of introducing the stochasticity are closely related. By comparing the coefficients, one observes that \eqref{sto_Hamel}-\eqref{sto_com} and \eqref{e8}-\eqref{e9} are equivalent if and only if
\begin{align}
\frac{\delta\Phi}{\delta\mu}(g,\mu) = \Theta
\qquad
\text{and}
\qquad
T_{e}^{*}L_{g}\frac{\delta\Phi}{\delta g}(g,\mu) = 0,
\label{equivalence}
\end{align}
which is equivalent to that the stochastic potential is of the following form
\begin{align*}
\Phi(g,\mu) = \left<\mu,\Theta\right> + C,
\label{equivalence}
\end{align*}
where $C\in\mathbb{R}$ is a constant.
\end{rmk}

\begin{exam}[Stochastic wave equation, continued]
\label{sto_wave_phi}
For the stochastic wave motion, the stochastic field-theoretic equations \eqref{e8}-\eqref{e7} and \eqref{e9} specialize to
\begin{equation}
\begin{aligned}
&d_{t}\mu = - \sigma_{s}dt - \frac{\delta\Phi}{\delta \phi}\circ dW, 
\quad
d_{t}\phi = \zeta dt + \frac{\delta\Phi}{\delta\mu}\circ dW, 
\quad
\phi_{s} = \gamma,\\
&\mu = \zeta, 
\quad
\sigma = -\gamma, 
\quad
d_{t}\gamma = \zeta_{s}dt + \partial_{s}\left(\frac{\delta\Phi}{\delta\mu}\right)\circ dW.
\label{wave_2_1}
\end{aligned}
\end{equation}
In particular, if the stochastic potential $\Phi = \phi$, the stochastic field-theoretic equations \eqref{wave_2_1} become, in components,
\begin{align*}
\mathrm{d}_{t}\zeta = \Delta\phi \mathrm{d}t - \mathrm{d}W, 
\quad
\mathrm{d}_{t}\phi = \zeta \mathrm{d}t, 
\quad
\mathrm{d}_{t}\phi_{s} = \zeta_{s}\mathrm{d}t,
\end{align*}
which recovers the stochastic linear wave equations with additive noise in \cite{HoSu2022}. Furthermore, if $\Phi$ is chosen to be zero, equations \eqref{wave_2_1} become
\begin{align*}
\mu_{t} = - \sigma_{s}, 
\quad
\phi_{t} = \zeta, 
\quad
\phi_{s} = \gamma,
\quad
\mu = \zeta, 
\quad
\sigma = -\gamma, 
\quad
\gamma_{t} = \zeta_{s},
\end{align*}
which recovers the deterministic wave equation \citep{evans2022partial}.
\end{exam}

\subsection{Energy analysis}\label{Energy_Analysis}
The stochastic potential (or the stochastic diffusivity vector field) influences the total energy of the system. In order to quantitatively give an energy analysis of the system, it is more convenient to reformulate in the following stochastic Hamiltonian or Lie-Poisson forms. Given a system $\left(\ell,\Phi,W\right)$ with dynamics  \eqref{e8}-\eqref{e7}, the Hamiltonian $h$ is related to the Lagrangian $\ell(\zeta,\gamma)$ by
\begin{equation*}
h(\mu,\gamma) = \left<\mu,\zeta\right> - \ell(\zeta,\gamma),
\end{equation*}
where we assume that $\ell$ is hyperregular and with $\mu= \frac{\delta \ell}{\delta \zeta}$. If the Lagrangian $\ell$ and the stochastic potential $\Phi$ are $G$-invariant, the equations of motion \eqref{e8} and the stochastic compatibility condition \eqref{e9} can be rewritten in the following stochastic Hamiltonian form
\begin{numcases}{}
d_t\mu =
\left(
\mathrm{ad}^{*}_{\frac{\delta h}{\delta\mu}}\mu
+ \partial_{s}\frac{\delta h}{\delta\gamma}
- \mathrm{ad}^{*}_{\gamma}\frac{\delta h}{\delta\gamma}
\right)dt
+ \mathrm{ad}^{*}_{\frac{\delta\Phi}{\delta\mu}}\mu\circ dW, & \label{sto_Hamel_Ham}  \\
d_t\gamma =
\left(
\partial_{s}\frac{\delta h}{\delta\mu}
+ \mathrm{ad}_{\gamma}\frac{\delta h}{\delta\mu}
\right)dt
+ \left(
\mathrm{ad}_{\gamma}\frac{\delta\Phi}{\delta\mu}
+ \partial_{s}\frac{\delta\Phi}{\delta\mu}
\right)\circ dW,  &  \label{sto_com_Ham}
\end{numcases}
which is a stochastic extension of the affine Lie-Poisson formulation for geometrically exact beam as given in \cite{EllisGayBalmazHolmPutkaradzeRatiu2010}.

\begin{prop}\label{ene_evo}
The following two statements are equivalent:
\begin{itemize}
  \item[(a)] The equations of motion are given by \eqref{sto_Hamel_Ham}-\eqref{sto_com_Ham} in the stochastic Hamiltonian form;
  \item[(b)] For any smooth function $f$, its dynamics can be given in the following \textit{Stratonovich-Lie-Poisson} form
    \begin{equation}
    d_{t}f = \left\{f,h\right\}dt + \left\{f,\Phi\right\}\circ dW,
    \label{SLP}
    \end{equation}
    where the affine Lie-Poisson bracket $\left\{\cdot,\cdot\right\}$ is given by Lemma~\ref{ChenShi2025}.
\end{itemize}
\end{prop}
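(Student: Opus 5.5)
The plan is to treat the affine Lie--Poisson bracket of Lemma~\ref{ChenShi2025} as given. For functionals $f,k$ of $(\mu,\gamma)$ on $\mathfrak{se}^*(3)\times\mathfrak{se}(3)$ fields over $[0,S]$, I expect it to take the form
\begin{align*}
\{f,k\}(\mu,\gamma)=\int_0^S\Big[&\Big\langle\mu,\Big[\frac{\delta f}{\delta\mu},\frac{\delta k}{\delta\mu}\Big]\Big\rangle
+\Big\langle\frac{\delta k}{\delta\gamma},\partial_s\frac{\delta f}{\delta\mu}+\mathrm{ad}_\gamma\frac{\delta f}{\delta\mu}\Big\rangle\\
&-\Big\langle\frac{\delta f}{\delta\gamma},\partial_s\frac{\delta k}{\delta\mu}+\mathrm{ad}_\gamma\frac{\delta k}{\delta\mu}\Big\rangle\Big]ds,
\end{align*}
up to the sign convention fixed in that lemma. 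I will use this bracket without re-verifying the Jacobi identity.

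The key tool is the Stratonovich chain rule. Because both equations are written with $\circ\, dW$, for any smooth functional $f(\mu,\gamma)$ we have
\[
d_tf=\int_0^S\Big(\Big\langle d_t\mu,\frac{\delta f}{\delta\mu}\Big\rangle+\Big\langle\frac{\delta f}{\delta\gamma},d_t\gamma\Big\rangle\Big)ds,
\]
with no Itô correction. This is the point where Stratonovich calculus is essential.

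For (a)$\Rightarrow$(b), substitute \eqref{sto_Hamel_Ham}--\eqref{sto_com_Ham} into this identity and separate the $dt$ terms from the $\circ\, dW$ terms. In the $dt$ part, three manipulations are needed:
\begin{itemize}
\item rewrite $\langle\mathrm{ad}^*_{\delta h/\delta\mu}\mu,\delta f/\delta\mu\rangle$ as a bracket term in $\mu$;
\item integrate $\langle\partial_s\,\delta h/\delta\gamma,\delta f/\delta\mu\rangle$ by parts in $s$ to get $-\langle\delta h/\delta\gamma,\partial_s\,\delta f/\delta\mu\rangle$, plus boundary terms that vanish under the boundary conditions;
\item move $\mathrm{ad}^*_\gamma$ across the pairing.
\end{itemize}
These steps should reproduce $\{f,h\}$ exactly. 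The $\circ\, dW$ part has the same structure with $h$ replaced by $\Phi$. Here $\delta\Phi/\delta\gamma=0$, since by $G$-invariance $\Phi$ depends only on $\mu$. Hence only the $\mu$-bracket term and the $\delta f/\delta\gamma$ term survive, and these match $\{f,\Phi\}$.

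For (b)$\Rightarrow$(a), choose $f$ to be linear coordinate functionals, $f(\mu,\gamma)=\int_0^S\langle\mu,\xi\rangle ds$ and $f=\int_0^S\langle\alpha,\gamma\rangle ds$, with arbitrary smooth test fields $\xi$ and $\alpha$. The same computation read backwards gives $\int_0^S\langle d_t\mu-\text{RHS},\xi\rangle ds=0$ and $\int_0^S\langle\alpha,d_t\gamma-\text{RHS}\rangle ds=0$ for all $\xi,\alpha$. This recovers \eqref{sto_Hamel_Ham}--\eqref{sto_com_Ham}.

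The main obstacle I expect is sign and convention bookkeeping: matching the left-versus-right $\mathrm{ad}^*$ sign in \eqref{sto_Hamel_Ham} with the sign in Lemma~\ref{ChenShi2025}, and handling the boundary terms from integration by parts. I would first check the signs on the wave-equation example, where $\mathrm{ad}=0$, and then fix the sign of the affine term $\partial_s$ accordingly.
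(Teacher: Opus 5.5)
Your proposal follows the paper's proof: apply the Stratonovich chain rule, substitute \eqref{sto_Hamel_Ham}--\eqref{sto_com_Ham}, and separate the $dt$ and $\circ\,dW$ parts, using $\delta\Phi/\delta\gamma=0$, which you state and the paper uses only implicitly; for the converse you test against $\mu$ and $\gamma$, which your linear functionals make precise. The only difference is that Lemma~\ref{ChenShi2025} already writes the bracket in the unsymmetrized form $\langle \delta f/\delta\gamma,\cdot\rangle+\langle\cdot,\delta f/\delta\mu\rangle$, so the identification is immediate with no integration by parts or boundary terms; note also that your guessed antisymmetric bracket is, after integrating by parts, exactly minus the lemma's, so you must indeed take the lemma's sign as given.
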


\begin{proof}
Please see Appendix~\ref{app_proof}.
\end{proof}
In particular, taking $f=h$ in the above \eqref{SLP} yields
\begin{equation*}
d_{t}h 
= \left\{h,h\right\}dt + \left\{h,\Phi\right\}\circ dW  
= \left\{h,\Phi\right\}\circ dW, 
\end{equation*}
which implies the following corollary.
\begin{coro}[Energy conservation]\label{Coro_Energy}
If the Hamiltonian $h$ Poisson-commutes with the stochastic potential $\Phi$, that is, $\left\{h,\Phi\right\} = 0$, then the total energy of the stochastic system is a conserved quantity.
\end{coro}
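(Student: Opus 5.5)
The statement to prove is the energy conservation corollary, Corollary~\ref{Coro_Energy}. The plan is to apply Proposition~\ref{ene_evo} with the particular choice $f=h$. The hypotheses are that $\ell$ and $\Phi$ are $G$-invariant and $\ell$ is hyperregular, so the equations of motion \eqref{e8}--\eqref{e9} reduce to the stochastic Hamiltonian form \eqref{sto_Hamel_Ham}--\eqref{sto_com_Ham}. Statement (a) of Proposition~\ref{ene_evo} therefore holds, and statement (b) gives, for every smooth functional $f(\mu,\gamma)$, the Stratonovich--Lie--Poisson evolution \eqref{SLP}. The Hamiltonian $h(\mu,\gamma)$ is itself such a functional (smoothness follows from hyperregularity of $\ell$), so we may take $f=h$ and obtain
\begin{equation*}
d_t h = \{h,h\}\,dt + \{h,\Phi\}\circ dW .
\end{equation*}

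The first key step is to show $\{h,h\}=0$. This follows from antisymmetry of the affine Lie--Poisson bracket of Lemma~\ref{ChenShi2025}. I will check that each term of that bracket is antisymmetric in its two arguments.
\begin{itemize}
  \item The Lie--Poisson part $\langle \mu,[\delta f/\delta\mu,\delta k/\delta\mu]\rangle$ is antisymmetric because the Lie bracket on $\mathfrak{se}(3)$ is.
  \item The $\gamma$-coupling part has the form $\langle \delta f/\delta\gamma,\,\partial_s\delta k/\delta\mu+\mathrm{ad}_\gamma \delta k/\delta\mu\rangle - (f\leftrightarrow k)$, integrated over $[0,S]$. It is antisymmetric by construction.
\end{itemize}
The one subtlety is the affine $\partial_s$ term. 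Its antisymmetry may require an integration by parts in $s$, in which case boundary terms appear. These vanish under the boundary condition $\sigma=\delta\ell/\delta\gamma=0$ at $s=0,S$ derived in Section~\ref{Hamel_HP}, which in Hamiltonian variables reads $\delta h/\delta\gamma=0$ at the endpoints.

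The second step uses the hypothesis $\{h,\Phi\}=0$. With both terms zero, $d_t h=0$ in the Stratonovich sense. I then integrate: Stratonovich calculus obeys the ordinary chain rule, and the relation holds pathwise as an integral identity, so $h(t)=h(a)$ almost surely for all $t$. Hence the total energy $\int_0^S h\,ds$ is conserved. If $h$ in \eqref{SLP} denotes the integrated Hamiltonian functional rather than the density, the identity applies to it directly.

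I expect the main obstacle to be the rigorous justification that the bracket is antisymmetric when evaluated on $h$. Specifically, one must confirm that the spatial boundary terms produced by the affine $\partial_s$ contribution vanish. My first approach is to invoke the stated boundary conditions on $\delta h/\delta\gamma$. Failing that, I would assume periodic or free-end conditions, as the Hamiltonian formulation implicitly does. Everything else is a direct substitution into Proposition~\ref{ene_evo}.
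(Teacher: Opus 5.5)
Your proposal is correct and is essentially the paper's argument: take $f=h$ in the Stratonovich--Lie--Poisson form \eqref{SLP}, use $\{h,h\}=0$, and the hypothesis $\{h,\Phi\}=0$ leaves $d_t h=0$. Your check of antisymmetry is a detail the paper leaves implicit: with the bracket of Lemma~\ref{ChenShi2025}, $\{h,h\}$ reduces to the boundary term $\bigl[\langle \delta h/\delta\gamma,\delta h/\delta\mu\rangle\bigr]_{0}^{S}=-\bigl[\langle\sigma,\zeta\rangle\bigr]_{0}^{S}$, which vanishes under the free-end condition $\sigma=0$ at $s=0,S$, and this also confirms that $h$ must be read as the integrated Hamiltonian functional rather than the pointwise density.
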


\begin{rmk}\label{Choice_Phi}
There are some specific choices of $\Phi$ such that the energy is conserved. (a) A trivial situation occurs when $\Phi$ is a Casimir function \citep{marsden1999}, which Poisson-commutes with $\mu$ and $\gamma$ and therefore makes the equations of motion implicitly stochastic according to \eqref{SLP}. (b) Other interesting but nontrivial candidates of $\Phi$ that conserve energy include smooth functions of the Hamiltonian $h$ (that is, $\Phi = \Psi(h)$, where $\Psi$ is a differentiable function), then the affine Lie-Poisson bracket $\left\{h,\Phi\right\}
= D\Psi(h)\left\{h, h\right\}
= 0$, which implies that the total energy is conserved.
\end{rmk}

\section{Stochastic field-theoretic variational integrators}\label{Discrete}
In the above sections, we proposed two continuous-time variational principles for stochastic field-theoretic systems from a geometric perspective. In this section, we introduce two variational discretizations (Type-I \& II) associated to the proposed two continuous-time formulations. In addition, the relationship between these two discretizations will be investigated in detail. Stochastic field-theoretic variational integrators will be constructed for the examples of the stochastic geometrically exact beam, the stochastic wave equation, and the stochastic flying spaghetti.

\subsection{Discrete stochastic Hamilton's principle (Type-I integrator)}\label{d_ham}
The first variational discretization corresponds to the variational principle proposed in Section~\ref{Hamel_HP}, which results in the so-called Type-I integrator. Let $\Delta t>0$ and $\Delta s>0$ be the temporal and spatial discretization steps, respectively. To start the variational discretization, we first discretize the state variables. The discrete curve approximating $\left\{\left(g,\zeta,\gamma,\right)\right\}\subseteq G\times \mathfrak{g}\times \mathfrak{g}$ is 
\begin{equation*}
c_{d} =
\left\{    \left(g^{0}_{j},\zeta^{1/2}_{j},\gamma^{0}_{j+1/2},g^{1}_{j},\cdots,g^{N-1}_{j},\zeta^{N-1/2}_{j},\gamma^{N-1}_{j+1/2},g^{N}_{j}\right)
\right\},
\end{equation*}
where $g^{n}_{j}$ ($1\leq j \leq J$, $0\leq n \leq N$) is defined by $g^{n}_{j} = g(t_{n}, s_{j})$, and $\zeta^{n+1/2}_{j}$ ($1\leq j \leq J$, $0\leq n \leq N-1$) is the approximation of $\zeta$ at $\left(t_{n+1/2}, s_{j}\right) = \left(\frac{t_{n}+t_{n+1}}{2}, s_{j}\right)$, that is, $\zeta^{n+1/2}_{j} \approx \zeta\left(\frac{t_{n}+t_{n+1}}{2}, s_{j}\right)$. Similarly, $\gamma^{n}_{j+1/2}$ ($0\leq j \leq J-1$, $1\leq n \leq N$) is the approximation of $\gamma$ at $\left(t_{n},s_{j+1/2}\right) := \left(t_{n}, \frac{s_{j} + s_{j+1}}{2}\right)$, i.e., $\gamma^{n}_{j+1/2} \approx \gamma\left(t_{n}, \frac{s_{j} + s_{j+1}}{2}\right)$. The discrete temporal kinematic equation is given by 
\begin{equation}
g^{n+1}_{j} = g^{n}_{j}\tau
\left(
\Delta t\zeta^{n+1/2}_{j} + \Theta^{n+1/2}_{j}
\left(W^{(M)}_{n+1,j} - W^{(M)}_{n,j}\right)
\right),
\label{dis_kin_eq}
\end{equation}
where $\tau:\mathfrak{g}\rightarrow G$ is the retraction map, which is used to locally flatten the Lie group. The discrete spatial kinematic equation is given by
\begin{equation}
g^{n}_{j+1} = g^{n}_{j}\tau
\left(
\Delta s\gamma^{n}_{j+1/2}
\right).
\label{dis_spatial_equation}
\end{equation}
Define the discrete action sum that approximates the action functional \eqref{action_functional} by
\begin{equation}
\begin{aligned}
A_{d}(c_{d}) = \Delta t\Delta s\sum_{n=0}^{N-1}\sum_{j=0}^{J-1}
\ell
\left(g^{n}_{j},\zeta^{n+1/2}_{j}, \gamma^{n}_{j+1/2}
\right),
\label{discrete_act_sum_1}
\end{aligned}
\end{equation}
where 
$c_{d} =
\left\{
\left(g^{0}_{j},\zeta^{1/2}_{j},\gamma^{0}_{j+1/2},g^{1}_{j},\cdots,g^{N-1}_{j},\zeta^{N-1/2}_{j},\gamma^{N-1}_{j+1/2},g^{N}_{j}\right)
\right\}$ is a curve in space
\begin{equation*}
C_{d}(N,J) = \left(G\times\left(\mathfrak{g}\oplus \mathfrak{g}\times G\right)^{N}\right)^{J}
\end{equation*}
subject to constraints \eqref{dis_kin_eq}-\eqref{dis_spatial_equation}. In the above \eqref{dis_kin_eq}, $W^{(M)}$ approximates the $Q$-Wiener process $W$ by truncating the Karhunen-Lo\`{e}ve expansion \citep{loeve1978}
\begin{equation*}
W(t) = \sum_{k=1}^{\infty}\sqrt{\eta_{k}}e_{k}B_{k}(t),
\quad
t \geq 0,
\end{equation*}
where $\left\{B_{k}\right\}_{k\in\mathbb{N}_{+}}$ is a family of independent standard Brownian motions and $\left(e_{k}, \eta_{k}\right)$ are the eigenvectors and eigenvalues of $Q$, respectively, with $\left\{e_{k}\right\}_{k\in\mathbb{N}_{+}}$ being an orthonormal basis of $L^{2}(0,S)$. We follow the idea of truncation in \cite{Hong2023}, that is, 
\begin{equation*}
W^{(M)} (t):= \sum_{k=1}^{M}\sqrt{\eta_{k}}e_{k}B_{k}(t),
\end{equation*}
with $M$ being a positive integer. Defining the discrete variable
$W^{(M)}_{n,j} := W^{(M)}\left(t^{n},s_{j}\right)$ yields the increment
\begin{equation*}
W^{(M)}_{n+1,j} - W^{(M)}_{n,j}
=
\sqrt{\Delta t}\sum_{k=1}^{M}\sqrt{\eta_{k}}e_{k}(s_{j})\beta^{n}_{k},
\end{equation*}
where $\beta^{n}_{k} = \frac{B_{k}(t^{n+1}) - B_{k}(t^{n})}{\sqrt{\Delta t}}\sim N(0,1)$ is distributed according to the standard Gaussian distribution and $s_{j} = jS/J$. In this paper, we take the eigenpairs $\left\{\left(\eta_{k},e_{k}\right)\right\}_{k\in\mathbb{N}}$ to be
\begin{equation*}
\eta_{k} = k^{-(2r+1+\varepsilon)},
\qquad
e_{k}(s) = \sqrt{\frac{2}{S}}\sin\left(\frac{k\pi s}{S}\right),
\label{eigenpair}
\end{equation*}
where $s\in [0,S]$, $k\in\mathbb{N}$, $r>0$, and $\varepsilon\geq 0$.







Now we are ready to present variational discretizations of stochastic geometrically exact beams by the following steps:
\begin{itemize}
    \item[(a)] We first present an \textbf{\textit{exact variational integrator}}, which is obtained by deriving exact variation formulae.
    \item[(b)] In order to improve computational efficiency, we keep the temporal/spatial accuracy up to certain orders to propose two ways of obtaining the \textbf{\textit{approximate variational integrator}}: 
        \begin{itemize}
            \item[(i)] Truncation-then-variation: First truncate the exact variations, and then perform the variation to derive the corresponding approximate variational integrator.
            \item[(ii)] Variation-then-truncation: First perform the variations to obtain the exact discrete equations, and then truncate these equations to obtain an approximate variational integrator.
        \end{itemize}
    \item[(c)] Finally, we show that the obtained two approximate variational integrators in the above (i) and (ii) are in fact the same, and both agree with the exact variational integrator up to temporal order $\mathcal{O}\left(\Delta t\right)$ and spatial order $\mathcal{O}\left(\Delta s^{2}\right)$.
\end{itemize}
The idea is shown in the following Fig.~\ref{Idea_Chap_4}.

\begin{figure}[pos=h]
\centering
\vspace{1em}
\begin{overpic}[width = 0.85\textwidth]{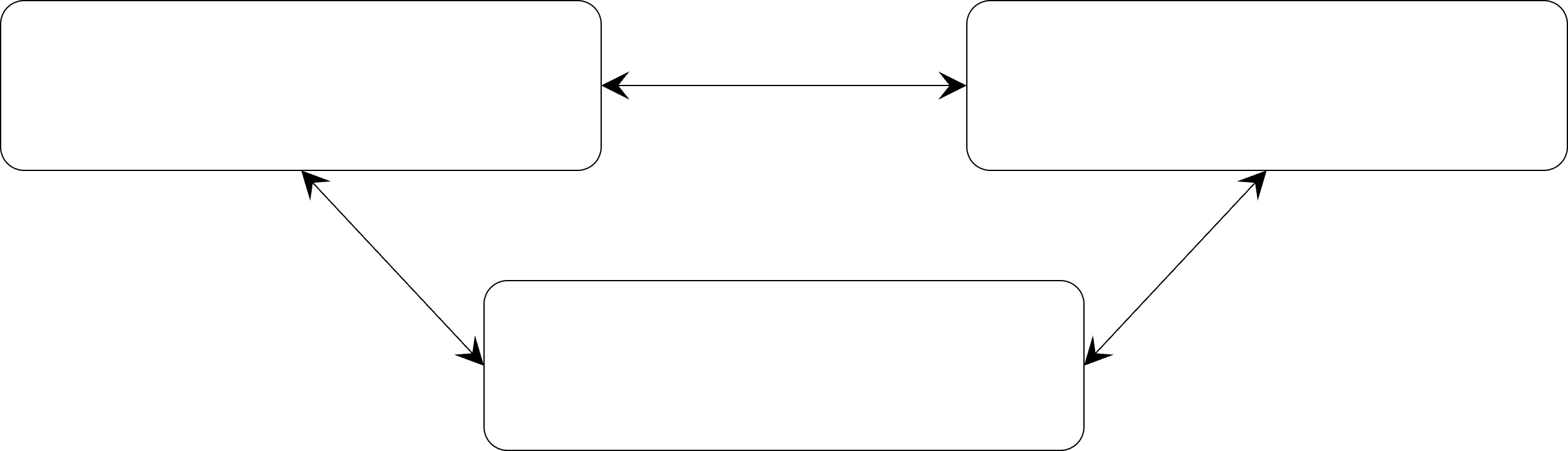}
\put(3,25.8){{\footnotesize \textbf{Approximate Variational Integrator}}}
\put(2.5,22.8){{\footnotesize obtained by truncation-then-variation}}
\put(12.5,19.8){{\footnotesize (Section~\ref{App_Int_Type1})}}
\put(47,24){Agree}
\put(49,17.5){$\circlearrowright$}
\put(28,14){equivalent up to accuracy order $\mathcal{O}\left(\Delta t + \Delta s^{2}\right)$}
\put(19.5,10){Equiv.}
\put(75.5,10){Equiv.}
\put(65,25.8){{\footnotesize \textbf{Approximate Variational Integrator}}}
\put(64.5,22.8){{\footnotesize obtained by variation-then-truncation}}
\put(74.5,19.8){{\footnotesize (Section~\ref{App_VI_VTT})}}
\put(32,8){{\footnotesize \textbf{Stochastic Exact Variational Integrator}}}
\put(33.5,5){{\footnotesize obtained by variation-then-truncation}}
\put(43.5,2){{\footnotesize (Section~\ref{Exact_VI_1})}}
\end{overpic}
\caption{The idea of constructing exact/approximate variational integrators.}
\label{Idea_Chap_4}
\vspace{-1em}
\end{figure}




\subsubsection{Stochastic exact variational integrator (Type-I)}\label{Exact_VI_1}

We first give the exact variation formulae for $\zeta_{j}^{n+1/2}$ and $\gamma_{j+1/2}^{n}$. Then, we use them to construct an exact variational integrator as illustrated in the bottom part of Fig.~\ref{Idea_Chap_4}.
\begin{prop}\label{exact_variations}
The exact variations $\delta\zeta^{n+1/2}_{j}$ and $\delta\gamma^{n}_{j+1/2}$ subject to constraints \eqref{dis_kin_eq} and \eqref{dis_spatial_equation} are given by
\begin{eqnarray}
\delta\zeta^{n+1/2}_{j} \Delta t 
&=& \delta\left(\tau^{-1}\left(\left(g^{n}_{j}\right)^{-1}g^{n+1}_{j}\right)\right)  
= \mathrm{d}_{\tau^{-1}\left(\left(g^{n}_{j}\right)^{-1}g^{n+1}_{j}\right)}\tau^{-1}
\cdot
\left(
-\eta^{n}_{j} + \mathrm{Ad}_{\left(g^{n}_{j}\right)^{-1}g^{n+1}_{j}}\eta^{n+1}_{j}
\right),  \label{var_zeta_2}\\
\delta\gamma^{n}_{j+1/2} \Delta s&=& 
\delta\left(\tau^{-1}\left(\left(g^{n}_{j}\right)^{-1}g^{n}_{j+1}\right)\right)
=
\mathrm{d}_{\tau^{-1}\left(\left(g^{n}_{j}\right)^{-1}g^{n}_{j+1}\right)}\tau^{-1}
\cdot
\left(
-\eta^{n}_{j} + \mathrm{Ad}_{\left(g^{n}_{j}\right)^{-1}g^{n}_{j+1}}\eta^{n}_{j+1}
\right),
\label{var_gamma_2}
\end{eqnarray}
where $\eta^{n}_{j} = \left(g^{n}_{j}\right)^{-1}\delta g^{n}_{j}$, 
$\mathrm{Ad}:G\times\mathfrak{g}\rightarrow\mathfrak{g}$ is the adjoint action, and $\mathrm{d}\tau^{-1}:\mathfrak{g}\times\mathfrak{g}\rightarrow\mathfrak{g}$ is the inverse right trivialized tangent \citep{Bou_Rabee_Marsden_2009}.
\end{prop}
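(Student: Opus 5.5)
The plan is to solve the discrete kinematic constraints for the increments and then vary the resulting expressions. Set $f^{n}_{j}:=(g^{n}_{j})^{-1}g^{n+1}_{j}$. Inverting the retraction in \eqref{dis_kin_eq} (locally, near the identity) gives
\[
\Delta t\,\zeta^{n+1/2}_{j}+\Theta^{n+1/2}_{j}\bigl(W^{(M)}_{n+1,j}-W^{(M)}_{n,j}\bigr)=\tau^{-1}\bigl(f^{n}_{j}\bigr).
\]
As in the continuous case \eqref{rhs}, the diffusivity $\Theta$ and the noise increments are prescribed data and are not varied. Taking the variation therefore gives $\Delta t\,\delta\zeta^{n+1/2}_{j}=\delta\bigl(\tau^{-1}(f^{n}_{j})\bigr)$, which is the first equality in \eqref{var_zeta_2}. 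The same argument applied to \eqref{dis_spatial_equation}, which contains no noise term, gives the first equality in \eqref{var_gamma_2}.

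Next I compute the variation of the group element $f^{n}_{j}$, mirroring \eqref{lhs}. Using $\delta(g^{-1})=-g^{-1}\delta g\,g^{-1}$, we get
\[
\delta f^{n}_{j}=-\eta^{n}_{j}f^{n}_{j}+f^{n}_{j}\eta^{n+1}_{j}.
\]
Right-translating to the identity then gives
\[
\delta f^{n}_{j}\,(f^{n}_{j})^{-1}=-\eta^{n}_{j}+\mathrm{Ad}_{f^{n}_{j}}\eta^{n+1}_{j}.
\]
This right-trivialized form is the one suited to the convention for $\mathrm{d}\tau^{-1}$ in \cite{Bou_Rabee_Marsden_2009}.

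The remaining step is the chain rule for $\tau^{-1}$. By the definition of the right trivialized tangent, for $\xi=\tau^{-1}(f)$ and a variation $\delta f$ we have
\[
\delta\bigl(\tau^{-1}(f)\bigr)=\mathrm{d}_{\xi}\tau^{-1}\cdot\bigl(\delta f\,f^{-1}\bigr).
\]
This is the inverse of the relation $\delta\tau(\xi)=\mathrm{d}_{\xi}\tau(\delta\xi)\,\tau(\xi)$. Substituting the expression from the previous step yields \eqref{var_zeta_2}, and replacing $g^{n+1}_{j}$ by $g^{n}_{j+1}$ throughout yields \eqref{var_gamma_2}.

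The main obstacle is keeping the trivialization conventions consistent. One must check that the definition of $\mathrm{d}\tau^{-1}$ as the inverse of $\mathrm{d}\tau$ matches right translation rather than left, since that choice is exactly what produces $-\eta^{n}_{j}+\mathrm{Ad}\,\eta^{n+1}_{j}$ instead of $-\mathrm{Ad}^{-1}\eta^{n}_{j}+\eta^{n+1}_{j}$. One must also justify that the noise term does not contribute to the variation. Finally, the invertibility of $\tau$ near the identity, and hence the well-definedness of $\tau^{-1}(f^{n}_{j})$, requires the discrete increments to be small.
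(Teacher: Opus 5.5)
Your proposal is correct and follows essentially the same route as the paper's proof in Appendix~A. Both arguments invert the retraction in \eqref{dis_kin_eq}, vary both sides with the noise increment held fixed, apply the chain rule through the right-trivialized tangent $\mathrm{d}\tau^{-1}$, and compute $\delta f^{n}_{j}\,(f^{n}_{j})^{-1}=-\eta^{n}_{j}+\mathrm{Ad}_{f^{n}_{j}}\eta^{n+1}_{j}$; your write-up is cleaner than the appendix, which writes $\Delta t\,\zeta^{n+1/2}_{j}$ where $\Delta t\,\delta\zeta^{n+1/2}_{j}$ is meant and $\mathrm{Ad}^{*}$ where $\mathrm{Ad}$ is meant.
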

\begin{proof}
Please see Appendix~\ref{App_var}.
\end{proof}

Then, we use the exact variation formulas established in Prop.~\ref{exact_variations} to derive the exact variational integrator below.

\begin{theorem}\label{theorem_exact_integrator}
The following statements are equivalent:
\begin{itemize}
    \item[(i)] The discrete curve
    \begin{equation*}
    c_{d} =
    \left\{
    \left(g^{0}_{j},\zeta^{1/2}_{j},\gamma^{0}_{j+1/2},g^{1}_{j},\cdots,g^{N-1}_{j},\zeta^{N-1/2}_{j},\gamma^{N-1}_{j+1/2},g^{N}_{j}\right)
    \right\}
    \end{equation*}
    in $C_{d}(N,J) = \left(G\times\left(\mathfrak{g}\oplus \mathfrak{g}\times G\right)^{N}\right)^{J}$ with kinematics constraints \eqref{dis_kin_eq}-\eqref{dis_spatial_equation} is a critical point of the stochastic action sum \eqref{discrete_act_sum_1}, that is, 
    $$\delta A_{d} = 0,$$
    for variations $\delta g^{n}_{j}$, $\delta\zeta^{n+1/2}_{j}$, and $\delta\gamma^{n}_{j+1/2}$ given by Prop.~\ref{exact_variations}
    and the fixed point condition $\delta g^{0}_{j} = \delta g^{N}_{j} = 0$, $j=1,\dots,J$.
    \item[(ii)] The discrete curve
    \begin{equation*}
    c_{d} =
    \left\{
    \left(g^{0}_{j},\zeta^{1/2}_{j},\gamma^{0}_{j+1/2},g^{1}_{j},\cdots,g^{N-1}_{j},\zeta^{N-1/2}_{j},\gamma^{N-1}_{j+1/2},g^{N}_{j}\right)
    \right\}
    \end{equation*}
    subject to constraints \eqref{dis_kin_eq}-\eqref{dis_spatial_equation} satisfies the discrete stochastic equations of motion
    \begin{eqnarray}
    \begin{aligned}
    &\Delta t\Delta s T_{e}^{\ast}L_{g^{n}_{j}}\frac{\delta\ell}{\delta g}\left(g^{n}_{j},\zeta^{n+1/2}_{j},\gamma^{n}_{j+1/2}\right)
    - \Delta s\left[\mathrm{d_{\tau^{-1}\left(\left(g^{n}_{j}\right)^{-1}g^{n+1}_{j}\right)}\tau^{-1}}\right]^{\ast}
    \frac{\delta\ell}{\delta\zeta}\left(g^{n}_{j},\zeta^{n+1/2}_{j},\gamma^{n}_{j+1/2}\right)  \\
    &\quad
    + \Delta s\mathrm{Ad}^{\ast}_{\left(g^{n-1}_{j}\right)^{-1}g^{n}_{j}}
    \left[\mathrm{d_{\tau^{-1}\left(\left(g^{n-1}_{j}\right)^{-1}g^{n}_{j}\right)}\tau^{-1}}\right]^{\ast}
    \frac{\delta\ell}{\delta\zeta}\left(g^{n-1}_{j},\zeta^{n-1/2}_{j},\gamma^{n-1}_{j+1/2}\right)  \\
    &\quad 
    + \Delta t\mathrm{Ad}^{\ast}_{\left(g^{n}_{j-1}\right)^{-1}g^{n}_{j}}
    \left[\mathrm{d_{\tau^{-1}\left(\left(g^{n}_{j-1}\right)^{-1}g^{n}_{j}\right)}\tau^{-1}}\right]^{\ast}
    \frac{\delta\ell}{\delta\gamma}\left(g^{n}_{j-1},\zeta^{n+1/2}_{j-1},\gamma^{n}_{j-1/2}\right)  \\
    &\quad
    - \Delta t\left[\mathrm{d_{\tau^{-1}\left(\left(g^{n}_{j}\right)^{-1}g^{n}_{j+1}\right)}\tau^{-1}}\right]^{\ast}
    \frac{\delta\ell}{\delta\gamma}\left(g^{n}_{j},\zeta^{n+1/2}_{j},\gamma^{n}_{j+1/2}\right) = 0,
    \label{without_truncation_exact}
    \end{aligned}
    \end{eqnarray}
    and the boundary conditions
    \begin{equation}
    \begin{aligned}    \left[\mathrm{d}_{\tau^{-1}\left(\left(g^{n}_{0}\right)^{-1}g^{n}_{1}\right)}\tau^{-1}\right]^{*}
    \frac{\delta\ell}{\delta\gamma^{n}_{1/2}} = 0
    \quad
    \text{and}
    \quad
    \mathrm{Ad}^{*}_{\left(g^{n}_{J-1}\right)^{-1}g^{n}_{J}}\left[\mathrm{d}_{\tau^{-1}\left(\left(g^{n}_{J-1}\right)^{-1}g^{n}_{J}\right)}\tau^{-1}\right]^{*}
    \frac{\delta\ell}{\delta\gamma^{n}_{J-1/2}} = 0.
    \label{Exact_bc}
    \end{aligned}
    \end{equation}
\end{itemize}
\end{theorem}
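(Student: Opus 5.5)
The plan is to compute $\delta A_d$ directly using the constrained variations of Prop.~\ref{exact_variations}, and then to perform a discrete summation by parts in both the temporal index $n$ and the spatial index $j$. Expanding the variation of \eqref{discrete_act_sum_1} gives
\begin{equation*}
\delta A_d=\Delta t\Delta s\sum_{n=0}^{N-1}\sum_{j=0}^{J-1}\Big[\Big\langle T_e^*L_{g^n_j}\tfrac{\delta\ell}{\delta g},\eta^n_j\Big\rangle+\Big\langle\tfrac{\delta\ell}{\delta\zeta},\delta\zeta^{n+1/2}_j\Big\rangle+\Big\langle\tfrac{\delta\ell}{\delta\gamma},\delta\gamma^n_{j+1/2}\Big\rangle\Big],
\end{equation*}
with all derivatives of $\ell$ evaluated at $(g^n_j,\zeta^{n+1/2}_j,\gamma^n_{j+1/2})$ and $\eta^n_j=(g^n_j)^{-1}\delta g^n_j$.

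Next I substitute \eqref{var_zeta_2} and \eqref{var_gamma_2}. The factors $\Delta t$ and $\Delta s$ in these formulas cancel one of the mesh factors, leaving $\Delta s$ in front of the $\zeta$-term and $\Delta t$ in front of the $\gamma$-term. Each pairing is then moved onto $\eta$ using the duals $[\mathrm{d}\tau^{-1}]^*$ and $\mathrm{Ad}^*$. This splits the $\zeta$-term into two pieces:
\begin{equation*}
-\Delta s\big\langle[\mathrm{d}\tau^{-1}]^*\tfrac{\delta\ell}{\delta\zeta},\eta^n_j\big\rangle
\quad\text{and}\quad
\Delta s\big\langle\mathrm{Ad}^*_{(g^n_j)^{-1}g^{n+1}_j}[\mathrm{d}\tau^{-1}]^*\tfrac{\delta\ell}{\delta\zeta},\eta^{n+1}_j\big\rangle .
\end{equation*}
The $\gamma$-term splits in the same way, with $\eta^n_j$ and $\eta^n_{j+1}$ in place of $\eta^n_j$ and $\eta^{n+1}_j$.

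I then reindex, shifting $n+1\mapsto n$ in the second temporal piece and $j+1\mapsto j$ in the second spatial piece, so that every term is paired with a common $\eta^n_j$. Collecting the coefficient of $\eta^n_j$ for interior indices $1\le n\le N-1$ reproduces exactly the four-term expression in \eqref{without_truncation_exact}, plus the $\delta\ell/\delta g$ term. The stochastic increment enters only implicitly, through $g^{n+1}_j$ in \eqref{dis_kin_eq}. Since the retraction argument is identified with $\tau^{-1}((g^n_j)^{-1}g^{n+1}_j)$, no separate noise term appears; this is the discrete counterpart of \eqref{rhs}.

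The reindexing produces leftover boundary contributions. The temporal leftovers at $n=0$ and $n=N$ vanish by the fixed-endpoint condition $\delta g^0_j=\delta g^N_j=0$. The spatial leftovers at $j=0$ and $j=J$ are not killed, because $\eta$ is free there. Their coefficients must therefore vanish separately, and this yields \eqref{Exact_bc}: the $-[\mathrm{d}\tau^{-1}]^*\frac{\delta\ell}{\delta\gamma}$ term at $j=0$, and the $\mathrm{Ad}^*[\mathrm{d}\tau^{-1}]^*\frac{\delta\ell}{\delta\gamma}$ term at $j=J$. The equivalence then follows from the arbitrariness and independence of the $\eta^n_j$, which may be chosen as arbitrary $\mathfrak g$-valued elements, since $\delta g^n_j=g^n_j\eta^n_j$ is unconstrained once $\delta\zeta$ and $\delta\gamma$ are determined by Prop.~\ref{exact_variations}. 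For (i)$\Rightarrow$(ii), take $\eta$ supported at a single node. For (ii)$\Rightarrow$(i), the summed expression vanishes identically.

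The main obstacle is the index bookkeeping at the edges. The $\zeta$-term at spatial index $j$ runs over $j=0,\dots,J-1$ while $g_j$ runs over $j=1,\dots,J$, so I need to check carefully which nodes appear in the interior equation and which appear only in the boundary relations, and that the ranges in \eqref{without_truncation_exact} and \eqref{Exact_bc} match. I would handle this by writing out the $j=0$ and $j=J$ coefficients explicitly after the shift.
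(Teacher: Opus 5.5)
Your route is the paper's own (Appendix~B). You vary \eqref{discrete_act_sum_1}, insert the exact variations of Prop.~\ref{exact_variations}, dualize with $[\mathrm{d}\tau^{-1}]^{*}$ and $\mathrm{Ad}^{*}$, and shift $n+1\mapsto n$, $j+1\mapsto j$. You then kill the $n=0,N$ terms with the fixed endpoints and read the spatial edge terms as boundary conditions. The interior equations ($1\le j\le J-1$) and the condition at $j=J$ come out as you say. No $\ell$-term of \eqref{discrete_act_sum_1} is indexed by $j=J$, so $\eta^{n}_{J}$ is paired only with the $\mathrm{Ad}^{*}[\mathrm{d}\tau^{-1}]^{*}\frac{\delta\ell}{\delta\gamma}$ leftover.

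The step that fails is the edge bookkeeping you flagged, at $j=0$. Because \eqref{discrete_act_sum_1} sums over $j=0,\dots,J-1$, the variable $\eta^{n}_{0}$ ($1\le n\le N-1$) is paired with more than the leftover $-\Delta t[\mathrm{d}_{\tau^{-1}((g^{n}_{0})^{-1}g^{n}_{1})}\tau^{-1}]^{*}\frac{\delta\ell}{\delta\gamma}(g^{n}_{0},\zeta^{n+1/2}_{0},\gamma^{n}_{1/2})$. It is also paired with the $\frac{\delta\ell}{\delta g}$ term and both $\frac{\delta\ell}{\delta\zeta}$ terms of the $j=0$ column. Arbitrariness of $\eta^{n}_{0}$ therefore only gives that this whole sum vanishes, i.e.\ \eqref{without_truncation_exact} at $j=0$ with the $(j-1)$-term deleted. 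It does not give the first condition of \eqref{Exact_bc} by itself. Conversely, (ii) does not make the $\eta^{n}_{0}$-coefficient vanish unless that $j=0$ equation is added. So ``their coefficients must therefore vanish separately'' is false at $j=0$, and neither direction of the equivalence goes through there.

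The paper's Appendix~B has exactly the same elision. It splits off $\sum_{n=1}^{N-1}\sum_{j=1}^{J-1}$ and silently drops the $j=0$ column's $\frac{\delta\ell}{\delta g}$ and $\frac{\delta\ell}{\delta\zeta}$ contributions. Relatedly, the $n=0,N$ leftovers contain $\eta^{0}_{0}$ and $\eta^{N}_{0}$, which the stated fixed-point condition ($j=1,\dots,J$) does not cover.

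To close the argument, do one of two things. Either replace the first condition of \eqref{Exact_bc} by the $j=0$ equation just described and impose the fixed endpoints for $j=0,\dots,J$. Or change the index ranges of the discrete action so that the end node $j=0$ carries no temporal term, in which case $\eta^{n}_{0}$ really does pair only with the $\gamma$-leftover.
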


\begin{proof}
Please see Appendix~\ref{proof_exact_integrator}.
\end{proof}

\begin{rmk}
The obtained stochastic exact variational integrator in Theorem~\ref{theorem_exact_integrator} respects the discrete variational principle, and therefore preserves the discrete differential geometry structure of the system (such as momentum map and symplecticity as mentioned in \cite{MW_FGB_2026}). In practical implementations, we can achieve fast computation by constructing numerical integrators that agree with this exact variational integrator up to certain temporal/spatial accuracy orders. To this end, we propose the following \textit{approximate variational integrators}. 
\end{rmk}

\subsubsection{Approximate variational integrator (Type-I): Truncation-then-variation}\label{App_Int_Type1}

From the derivation process of the above exact variational integrator, we find that the computational complexity partially comes from the expression of discrete variations \eqref{var_zeta_2} and \eqref{var_gamma_2}, which contains the adjoint action map $\mathrm{Ad}$ and the inverse right trivialized tangent $\mathrm{d}\tau^{-1}$. Hence, here we propose an approximate variational integrator, which is obtained by approximating the variation formulae \eqref{var_zeta_2} and \eqref{var_gamma_2} up to temporal order $\mathcal{O}\left(\Delta t^{3/2}\right)$ and spatial order $\mathcal{O}\left(\Delta s^{2}\right)$, as illustrated in the upper-left part of Fig.~\ref{Idea_Chap_4}.

\begin{prop}\label{app_var_formulae}
The exact variation formulae \eqref{var_zeta_2} and \eqref{var_gamma_2} admit series expansions
\begin{equation}
\begin{aligned}
\delta \left[\tau^{-1}\left(\left(g^{n}_{j}\right)^{-1}g^{n+1}_{j}\right)\right]
&= \delta\zeta^{n+1/2}_{j}\Delta t 
= \eta^{n+1}_{j} - \eta^{n}_{j}
+ \mathrm{ad}_{\tau^{-1}\left(\left(g^{n}_{j}\right)^{-1}g^{n+1}_{j}\right)}\eta^{n+1/2}_{j}
+ \mathcal{O}\left(\Delta t^{3/2}\right),  \\
\delta \left[\tau^{-1}\left(\left(g^{n}_{j}\right)^{-1}g^{n}_{j+1}\right)\right]
&= \delta\gamma^{n}_{j+1/2}\Delta s 
= \eta^{n}_{j+1} - \eta^{n}_{j} + \mathrm{ad}_{\tau^{-1}\left(\left(g^{n}_{j}\right)^{-1}g^{n}_{j+1}\right)}\eta^{n}_{j+1/2}
+ \mathcal{O}\left(\Delta s^{2}\right).
\label{approximate_variation_for}
\end{aligned}
\end{equation}
\end{prop}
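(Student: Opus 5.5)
We start from the exact formulae of Prop.~\ref{exact_variations} and expand the two factors appearing there: the inverse right trivialized tangent $\mathrm{d}\tau^{-1}$ and the adjoint action $\mathrm{Ad}$. Write $\xi := \tau^{-1}\left(\left(g^{n}_{j}\right)^{-1}g^{n+1}_{j}\right) = \Delta t\zeta^{n+1/2}_{j} + \Theta^{n+1/2}_{j}\left(W^{(M)}_{n+1,j}-W^{(M)}_{n,j}\right)$, using \eqref{dis_kin_eq}. Since the Wiener increment is $\sqrt{\Delta t}$ times a Gaussian, $\xi = \mathcal{O}(\Delta t^{1/2})$ in the mean-square sense, while $\xi^{2}$-type terms are $\mathcal{O}(\Delta t)$. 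In the spatial case, $\chi := \tau^{-1}\left(\left(g^{n}_{j}\right)^{-1}g^{n}_{j+1}\right) = \Delta s\gamma^{n}_{j+1/2} = \mathcal{O}(\Delta s)$ deterministically, by \eqref{dis_spatial_equation}.

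The key expansions I will use hold for any retraction with $\tau(0)=e$ and $\mathrm{d}\tau_{0}=\mathrm{Id}$, and whose second-order term is the symmetric one, as for $\exp$ or Cayley:
\begin{equation*}
\mathrm{d}_{\xi}\tau^{-1} = \mathrm{Id} - \tfrac{1}{2}\mathrm{ad}_{\xi} + \mathcal{O}(|\xi|^{2}),
\qquad
\mathrm{Ad}_{\tau(\xi)} = \mathrm{Id} + \mathrm{ad}_{\xi} + \mathcal{O}(|\xi|^{2}).
\end{equation*}
Substituting these into \eqref{var_zeta_2} gives
\begin{align*}
\mathrm{d}_{\xi}\tau^{-1}\left(-\eta^{n}_{j}+\mathrm{Ad}_{\tau(\xi)}\eta^{n+1}_{j}\right)
&= \left(\mathrm{Id}-\tfrac12\mathrm{ad}_{\xi}\right)\left(\eta^{n+1}_{j}-\eta^{n}_{j}+\mathrm{ad}_{\xi}\eta^{n+1}_{j}\right)+\mathcal{O}(|\xi|^{2}) \\
&= \eta^{n+1}_{j}-\eta^{n}_{j}+\mathrm{ad}_{\xi}\tfrac{1}{2}\left(\eta^{n+1}_{j}+\eta^{n}_{j}\right) + \mathcal{O}(|\xi|^{2}) + \mathcal{O}\left(|\xi|\,|\eta^{n+1}_{j}-\eta^{n}_{j}|\right).
\end{align*}
With $\eta^{n+1/2}_{j} := \tfrac12(\eta^{n}_{j}+\eta^{n+1}_{j})$, this is exactly the claimed leading part.

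The step I expect to be the main obstacle is controlling the remainder orders, in particular the stochastic one. The cross term $\mathrm{ad}_{\xi}(\eta^{n+1}_{j}-\eta^{n}_{j})$ is of order $|\xi|$ times the increment of the variation field. Treating $\eta$ as a smooth (in time, Hölder-$1/2$ at worst along the stochastic curve) variation, this increment is $\mathcal{O}(\Delta t^{1/2})$, so the product is $\mathcal{O}(\Delta t)$. Similarly, $\mathcal{O}(|\xi|^{2})$ only gives $\mathcal{O}(\Delta t)$, not $\Delta t^{3/2}$.

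To reach $\Delta t^{3/2}$, I would therefore refine the argument in one of two ways. The first is to interpret the remainder as acting on $\eta$ bilinearly, so that the $|\xi|^{2}$ terms multiply $\eta$-increments, giving $|\xi|^{2}\cdot\mathcal{O}(\Delta t^{1/2}) = \mathcal{O}(\Delta t^{3/2})$. The second is to expand one order further and check that the quadratic terms cancel at $\eta^{n+1/2}_{j}$. Concretely, the second-order terms in $\mathrm{d}\tau^{-1}$ and $\mathrm{Ad}$, namely $\tfrac{1}{12}\mathrm{ad}_{\xi}^{2}$ and $\tfrac12\mathrm{ad}_{\xi}^{2}$, combine with the $-\tfrac12\mathrm{ad}_{\xi}\mathrm{ad}_{\xi}$ product into a multiple of $\mathrm{ad}_{\xi}^{2}$ applied to $\eta^{n+1}_{j}-\eta^{n}_{j}$, up to a term in $\eta$ whose coefficient is checked to vanish for the midpoint choice. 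I would verify this cancellation first for $\tau=\exp$ via the BCH-type series $\mathrm{d}\exp^{-1}_{\xi} = \sum_k \frac{B_k}{k!}\mathrm{ad}_\xi^k$, and then note that a general retraction agrees to this order.

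The spatial formula follows by the same computation with $\chi=\mathcal{O}(\Delta s)$ deterministic, with no stochastic subtlety. Here the midpoint symmetrization kills the first-order error, and the remainder is $\mathcal{O}(\Delta s^{2})$.
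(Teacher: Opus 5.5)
Your argument is the paper's own proof (Appendix~\ref{app_variation}): expand $\mathrm{d}_{\xi}\tau^{-1}$ and $\mathrm{Ad}_{\tau(\xi)}$, cancel the $\tfrac12\mathrm{ad}_{\xi}^{2}\eta^{n+1}_{j}$ terms, and collect the first-order terms into $\mathrm{ad}_{\xi}\eta^{n+1/2}_{j}$. Your refinement (2) is correct and goes beyond the paper: it shows the only surviving quadratic term is $\tfrac{1}{12}\mathrm{ad}_{\xi}^{2}(\eta^{n+1}_{j}-\eta^{n}_{j})$ (for $\tau=\exp$ this follows from the closed form $\delta\xi=\mathrm{ad}_{\xi}\eta^{n+1}_{j}+\mathrm{d}_{\xi}\tau^{-1}(\eta^{n+1}_{j}-\eta^{n}_{j})$), so the $\mathcal{O}(\Delta t^{3/2})$ claim rests on $\eta^{n+1}_{j}-\eta^{n}_{j}=\mathcal{O}(\Delta t^{1/2})$, an assumption the paper uses silently in its step $\mathrm{d}_{V}\tau^{-1}\cdot A=A-\tfrac12\mathrm{ad}_{V}A+\mathcal{O}(\Delta t^{3/2})$. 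Two corrections: there is no $\mathcal{O}(|\xi|\,|\eta^{n+1}_{j}-\eta^{n}_{j}|)$ remainder, because $-\tfrac12\mathrm{ad}_{\xi}(\eta^{n+1}_{j}-\eta^{n}_{j})$ combines exactly with $\mathrm{ad}_{\xi}\eta^{n+1}_{j}$ to give $\mathrm{ad}_{\xi}\eta^{n+1/2}_{j}$; and your refinement (1) is valid only after the cancellation you establish in (2).
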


\begin{proof}
Please see Appendix~\ref{app_variation}.
\end{proof}

Hence, by keeping the temporal accuracy order up to $\mathcal{O}(\Delta t^{3/2})$ and the spatial accuracy order up to $\mathcal{O}(\Delta s^{2})$ in
\eqref{approximate_variation_for}, we obtain the approximate variation formulae
\begin{equation}
\begin{aligned}
\delta\zeta^{n+\frac{1}{2}}_{j}\Delta t = \eta^{n+1}_{j} - \eta^{n}_{j}
+ \mathrm{ad}_{\tau^{-1}\left(\left(g^{n}_{j}\right)^{-1}g^{n+1}_{j}\right)}\eta^{n+\frac{1}{2}}_{j},  
\qquad
\delta\gamma^{n}_{j+\frac{1}{2}}\Delta s = \eta^{n}_{j+1} - \eta^{n}_{j} + \mathrm{ad}_{\tau^{-1}\left(\left(g^{n}_{j}\right)^{-1}g^{n}_{j+1}\right)}\eta^{n}_{j+\frac{1}{2}},
\label{app_variation_formula_1}
\end{aligned}
\end{equation}
which induce the approximate variational integrator as the following theorem shows.

\begin{theorem}\label{Theorem_App_Int}
The approximate variation formulae \eqref{app_variation_formula_1} induce a stochastic variational integrator
\begin{equation}
\begin{aligned}
&\Delta t\Delta s T_{e}^{\ast}L_{g^{n}_{j}}\frac{\delta\ell}{\delta g}\left(g^{n}_{j},\zeta^{n+\frac{1}{2}}_{j},\gamma^{n}_{j+\frac{1}{2}}\right)
- \Delta s
\frac{\delta\ell}{\delta\zeta}\left(g^{n}_{j},\zeta^{n+\frac{1}{2}}_{j},\gamma^{n}_{j+\frac{1}{2}}\right)
+ \frac{\Delta s}{2}\mathrm{ad}^{\ast}_{\tau^{-1}\left(\left(g^{n}_{j}\right)^{-1}g^{n+1}_{j}\right)}
\frac{\delta\ell}{\delta\zeta}\left(g^{n}_{j},\zeta^{n+\frac{1}{2}}_{j},\gamma^{n}_{j+\frac{1}{2}}\right)\\
&\quad
+ \Delta s\frac{\delta\ell}{\delta\zeta}\left(g^{n-1}_{j},\zeta^{n-1/2}_{j},\gamma^{n-1}_{j+1/2}\right)
+ \frac{\Delta s}{2}\mathrm{ad}^{\ast}_{\tau^{-1}\left(\left(g^{n-1}_{j}\right)^{-1}g^{n}_{j}\right)}\frac{\delta\ell}{\delta\zeta}\left(g^{n-1}_{j},\zeta^{n-1/2}_{j},\gamma^{n-1}_{j+1/2}\right)  \\
&\quad
- \Delta t\frac{\delta\ell}{\delta\gamma}\left(g^{n}_{j},\zeta^{n+1/2}_{j},\gamma^{n}_{j+1/2}\right)
+ \frac{\Delta t}{2}\mathrm{ad}^{\ast}_{\tau^{-1}\left(\left(g^{n}_{j}\right)^{-1}g^{n}_{j+1}\right)}
\frac{\delta\ell}{\delta\gamma}\left(g^{n}_{j},\zeta^{n+1/2}_{j},\gamma^{n}_{j+1/2}\right)  \\
&\quad 
+ \Delta t\frac{\delta\ell}{\delta\gamma}\left(g^{n}_{j-1},\zeta^{n+1/2}_{j-1},\gamma^{n}_{j-1/2}\right)
+ \frac{\Delta t}{2}\mathrm{ad}^{\ast}_{\tau^{-1}\left(\left(g^{n}_{j-1}\right)^{-1}g^{n}_{j}\right)}\frac{\delta\ell}{\delta\gamma}\left(g^{n}_{j-1},\zeta^{n+1/2}_{j-1},\gamma^{n}_{j-1/2}\right)
= 0,
\label{App_Integrator_1}
\end{aligned}
\end{equation}
together with the discrete stochastic boundary condition
\begin{equation}
\begin{aligned}
&- \frac{\delta\ell}{\delta\gamma}
\left(
g^{n}_{0},\zeta^{n+1/2}_{0},\gamma^{n}_{1/2}
\right)
+
\frac{1}{2}
\mathrm{ad}^{*}_{
\tau^{-1}\left(\left(g^{n}_{0}\right)^{-1}g^{n}_{1}\right)
}
\frac{\delta\ell}{\delta\gamma}\left(g^{n}_{0},\zeta^{n+1/2}_{0},\gamma^{n}_{1/2}\right) = 0,  \\
&\frac{\delta\ell}{\delta\gamma}
\left(
g^{n}_{J-1},\zeta^{n+1/2}_{J-1},\gamma^{n}_{J-1/2}
\right)
+ \frac{1}{2}
\mathrm{ad}^{*}_{
\tau^{-1}\left(\left(g^{n}_{J-1}\right)^{-1}g^{n}_{J}\right)
}
\frac{\delta\ell}{\delta\gamma}\left(g^{n}_{J-1},\zeta^{n+1/2}_{J-1},\gamma^{n}_{J-1/2}\right)
= 0.
\label{App_boun_con_1}
\end{aligned}
\end{equation}
\end{theorem}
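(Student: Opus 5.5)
The plan is to compute $\delta A_d$ directly from the discrete action sum \eqref{discrete_act_sum_1}, substituting the approximate variation formulae \eqref{app_variation_formula_1} in place of the exact ones. This is the same route as the proof of Theorem~\ref{theorem_exact_integrator}, with $\mathrm{d}\tau^{-1}$ and $\mathrm{Ad}$ replaced by their truncations.

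The key interpretive choice is the meaning of the midpoint variations. I would take $\eta^{n+1/2}_j := \frac12(\eta^n_j+\eta^{n+1}_j)$ and $\eta^n_{j+1/2} := \frac12(\eta^n_j+\eta^n_{j+1})$. This is consistent with the factors $\frac12$ in \eqref{App_Integrator_1}. With it, each approximate variation becomes a linear combination of node variations:
\begin{equation*}
\delta\zeta^{n+1/2}_j\Delta t=\bigl(\mathrm{Id}+\tfrac12\mathrm{ad}_{\xi^{n}_{j}}\bigr)\eta^{n+1}_j-\bigl(\mathrm{Id}-\tfrac12\mathrm{ad}_{\xi^{n}_{j}}\bigr)\eta^n_j,
\qquad \xi^n_j:=\tau^{-1}\bigl((g^n_j)^{-1}g^{n+1}_j\bigr).
\end{equation*}
The analogous expression holds for $\delta\gamma^n_{j+1/2}\Delta s$, with $\chi^n_{j}:=\tau^{-1}((g^n_j)^{-1}g^n_{j+1})$.

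Next I would write
\begin{equation*}
\delta A_d=\Delta t\Delta s\sum_{n,j}\Bigl[\bigl\langle T_e^*L_{g^n_j}\tfrac{\delta\ell}{\delta g},\eta^n_j\bigr\rangle+\bigl\langle\tfrac{\delta\ell}{\delta\zeta},\delta\zeta^{n+1/2}_j\bigr\rangle+\bigl\langle\tfrac{\delta\ell}{\delta\gamma},\delta\gamma^n_{j+1/2}\bigr\rangle\Bigr].
\end{equation*}
I then insert the expressions above and move each operator onto the covector by duality, $\langle\mu,(\mathrm{Id}\pm\frac12\mathrm{ad}_\xi)\eta\rangle=\langle(\mathrm{Id}\pm\frac12\mathrm{ad}^*_\xi)\mu,\eta\rangle$. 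After this, the $\zeta$-terms carry a factor $\Delta s$ and the $\gamma$-terms a factor $\Delta t$, since the other step size cancels.

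The core step is a discrete summation by parts, i.e.\ index shifting. The term multiplying $\eta^{n+1}_j$ from cell $(n,j)$ is reindexed $n\to n-1$, so that in the $\zeta$-sum $\eta^n_j$ collects
\begin{equation*}
-(\mathrm{Id}-\tfrac12\mathrm{ad}^*_{\xi^n_j})\tfrac{\delta\ell}{\delta\zeta}\big|_{(n,j)}+(\mathrm{Id}+\tfrac12\mathrm{ad}^*_{\xi^{n-1}_j})\tfrac{\delta\ell}{\delta\zeta}\big|_{(n-1,j)}.
\end{equation*}
Likewise $j\to j-1$ is used in the $\gamma$-sum. The temporal endpoint terms vanish by the fixed-endpoint condition $\eta^0_j=\eta^N_j=0$. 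For interior $n$ and $1\le j\le J-1$, the arbitrariness of $\eta^n_j$ yields exactly \eqref{App_Integrator_1}.

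The spatial endpoints $j=0$ and $j=J$ are not fixed, so there the leftover coefficients must vanish separately. At $j=0$ only the cell $(n,0)$ contributes, through $\eta^n_0$. At $j=J$ only the cell $(n,J-1)$ contributes, through $\eta^n_J$. Setting these coefficients to zero gives the boundary conditions \eqref{App_boun_con_1}.

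The main obstacle is bookkeeping rather than analysis: the signs in $\mathrm{Id}\pm\frac12\mathrm{ad}^*$ must be tracked correctly after reindexing. The sign of the $\frac12\mathrm{ad}^*$ term on $-\frac{\delta\ell}{\delta\zeta}|_{(n,j)}$ in the stated formula must come out consistent with the dual of $(\mathrm{Id}-\frac12\mathrm{ad}_\xi)$ appearing with a minus sign. I would check this carefully against the sign convention of $\mathrm{ad}^*$ used in the paper, and if necessary cross-check by linearizing Theorem~\ref{theorem_exact_integrator} using $\mathrm{d}_\xi\tau^{-1}=\mathrm{Id}-\frac12\mathrm{ad}_\xi+\cdots$ and $\mathrm{Ad}=\mathrm{Id}+\mathrm{ad}+\cdots$.
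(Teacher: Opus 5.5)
Your proposal follows the paper's own proof. You substitute \eqref{app_variation_formula_1} into $\delta A_d$, dualize, reindex $n\to n-1$ in the $\zeta$-terms and $j\to j-1$ in the $\gamma$-terms, use $\eta^0_j=\eta^N_j=0$, and read the boundary conditions off the spatial endpoint terms. Your reading of $\eta^{n+1/2}_j$ and $\eta^n_{j+1/2}$ as averages is exactly what the expansion in the proof of Proposition~\ref{app_var_formulae} produces.

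The sign you were worried about comes out right. With $\langle\mathrm{ad}^*_\xi\mu,\eta\rangle=\langle\mu,\mathrm{ad}_\xi\eta\rangle$ (the convention used in the continuous derivation), you get $-(\mathrm{Id}-\frac12\mathrm{ad}^*_{\xi^n_j})\frac{\delta\ell}{\delta\zeta}=-\frac{\delta\ell}{\delta\zeta}+\frac12\mathrm{ad}^*_{\xi^n_j}\frac{\delta\ell}{\delta\zeta}$, exactly as in \eqref{App_Integrator_1}, and likewise for the $\gamma$-terms.

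One bookkeeping caveat applies, and the paper glosses over it in the same way. At $j=0$ the coefficient of $\eta^n_0$ is not just the spatial piece $-\Delta t(\mathrm{Id}-\frac12\mathrm{ad}^*_{\chi^n_0})\frac{\delta\ell}{\delta\gamma}$. It also contains $\Delta t\Delta s\,T_e^*L_{g^n_0}\frac{\delta\ell}{\delta g}$ and the temporal terms from the cells $(n,0)$ and $(n-1,0)$. Setting the full coefficient to zero therefore gives \eqref{App_Integrator_1} at $j=0$ with the $(j-1)$-terms deleted, not the first condition in \eqref{App_boun_con_1}. That condition is obtained only by splitting off the spatial flux and requiring it to vanish separately, which the paper does implicitly by restricting its interior sum to $1\le j\le J-1$. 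At $j=J$, your claim that only cell $(n,J-1)$ contributes is exactly right.
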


\begin{proof}
Please see Appendix~\ref{Proof_App_Int}.
\end{proof}
The advantage of the obtained approximate variational integrator is that it gives an elegant discretization scheme that looks similar to the continuous-time formulation \eqref{sto_Hamel}-\eqref{sto_com}, while maintaining the same performance as the exact integrator \eqref{without_truncation_exact} up to a certain accuracy order. Meanwhile, the approximate variational integrator avoids computationally complex operations $\mathrm{Ad}^{*}$ and $\mathrm{d}\tau^{-1}$, and also makes the compatibility condition easy to implement (see Section~\ref{Sec_comp_cond}). In some cases, the approximate integrator becomes the same as the exact integrator (see Example~\ref{Exam_wave}).

\subsubsection{Approximate variational integrator (Type-I): Variation-then-truncation}\label{App_VI_VTT}

The above approximate variational integrator in Theorem~\ref{Theorem_App_Int} results from first keeping the leading-order terms in the variational formulae and then taking variations in the discrete action sum. On the other hand, we can construct an approximate integrator in an opposite but direct way as illustrated in the upper-right part of Fig.~\ref{Idea_Chap_4}. This helps us figure out in which accuracy-order sense the approximate and exact integrators are equivalent. 

Consider maps in the exact variational integrator \eqref{without_truncation_exact}. First, the dual of the inverse trivialized tangent map and the coadjoint action admit series expansions \citep{Iserles2000lie,hairer2006,luesink2111casimir}
\begin{equation*}
\left[\mathrm{d}_{A}\tau^{-1}\right]^{\ast}
= I - \frac{1}{2}\mathrm{ad}^{\ast}_{A} + \mathcal{O}(|A|^{2})
\quad
\text{and}
\quad
\mathrm{Ad}^{\ast}_{\tau(B)} = I + \mathrm{ad}^{\ast}_{B} + \mathcal{O}(|B|^{2}).
\label{ser_exp}
\end{equation*}
Hence, in the exact variational integrator \eqref{without_truncation_exact} we have
\begin{align*}
\nonumber \left[\mathrm{d_{\tau^{-1}\left(\left(g^{n}_{j}\right)^{-1}g^{n+1}_{j}\right)}\tau^{-1}}\right]^{\ast}
= \left[\mathrm{d_{\Delta t\zeta^{n+1/2}_{j} + \Theta^{n+1/2}_{j}
\left(
W^{(M)}_{n+1,j} - W^{(M)}_{n,j}
\right)}
\tau^{-1}}\right]^{\ast}  
= I - \frac{1}{2}\mathrm{ad}^{\ast}_{\Delta t\zeta^{n+1/2}_{j} + \Theta^{n+1/2}_{j}
\left(
W^{(M)}_{n+1,j} - W^{(M)}_{n,j}
\right)}
+ \mathcal{O}(\Delta t).
\label{temp1}
\end{align*}
and
\begin{equation*}
\mathrm{Ad}^{\ast}_{\left(g^{n-1}_{j}\right)^{-1}g^{n}_{j}} 
= \mathrm{Ad}^{\ast}_{\tau\left(\Delta t\zeta^{n-1/2}_{j} + \Theta^{n-1/2}_{j}
\left(
W^{(M)}_{n,j} - W^{(M)}_{n-1,j}
\right)\right)}  
= I + \mathrm{ad}^{\ast}_{\Delta t\zeta^{n-1/2}_{j} + \Theta^{n-1/2}_{j}
\left(
W^{(M)}_{n,j} - W^{(M)}_{n-1,j}
\right)}
+ \mathcal{O}\left(\Delta t\right),
\label{temp2}
\end{equation*}
which implies that
\begin{eqnarray}
\mathrm{Ad}^{\ast}_{\left(g^{n-1}_{j}\right)^{-1}g^{n}_{j}}
\left[\mathrm{d_{\tau^{-1}\left(\left(g^{n-1}_{j}\right)^{-1}g^{n}_{j}\right)}\tau^{-1}}\right]^{\ast}
&=& I + \frac{1}{2}\mathrm{ad}^{\ast}_{\Delta t\zeta^{n-1/2}_{j} + \Theta^{n-1/2}_{j}
\left(
W^{(M)}_{n,j} - W^{(M)}_{n-1,j}
\right)}
+ \mathcal{O}\left(\Delta t\right).
\label{h1}
\end{eqnarray}
Similarly, for the spatial part we have
\begin{eqnarray}
\mathrm{Ad}^{\ast}_{\left(g^{n}_{j-1}\right)^{-1}g^{n}_{j}}
\left[\mathrm{d_{\tau^{-1}\left(\left(g^{n}_{j-1}\right)^{-1}g^{n}_{j}\right)}\tau^{-1}}\right]^{\ast}
=
I + \frac{1}{2}\mathrm{ad}^{\ast}_{\Delta s\gamma^{n}_{j-1/2}}
+ \mathcal{O}\left(\Delta s^{2}\right).
\label{h2}
\end{eqnarray}
Substituting \eqref{h1} and \eqref{h2} into \eqref{without_truncation_exact}, the exact variational integrator becomes
\begin{equation}
\begin{aligned}
&\Delta t\Delta s T_{e}^{\ast}L_{g^{n}_{j}}\frac{\delta\ell}{\delta g}\left(g^{n}_{j},\zeta^{n+\frac{1}{2}}_{j},\gamma^{n}_{j+\frac{1}{2}}\right)
- \Delta s
\frac{\delta\ell}{\delta\zeta}\left(g^{n}_{j},\zeta^{n+\frac{1}{2}}_{j},\gamma^{n}_{j+\frac{1}{2}}\right)
+ \frac{\Delta s}{2}\mathrm{ad}^{\ast}_{\tau^{-1}\left(\left(g^{n}_{j}\right)^{-1}g^{n+1}_{j}\right)}
\frac{\delta\ell}{\delta\zeta}\left(g^{n}_{j},\zeta^{n+\frac{1}{2}}_{j},\gamma^{n}_{j+\frac{1}{2}}\right)\\
&\quad
+ \Delta s\frac{\delta\ell}{\delta\zeta}\left(g^{n-1}_{j},\zeta^{n-1/2}_{j},\gamma^{n-1}_{j+1/2}\right)
+ \frac{\Delta s}{2}\mathrm{ad}^{\ast}_{\tau^{-1}\left(\left(g^{n-1}_{j}\right)^{-1}g^{n}_{j}\right)}\frac{\delta\ell}{\delta\zeta}\left(g^{n-1}_{j},\zeta^{n-1/2}_{j},\gamma^{n-1}_{j+1/2}\right)  \\
&\quad
- \Delta t\frac{\delta\ell}{\delta\gamma}\left(g^{n}_{j},\zeta^{n+1/2}_{j},\gamma^{n}_{j+1/2}\right)
+ \frac{\Delta t}{2}\mathrm{ad}^{\ast}_{\tau^{-1}\left(\left(g^{n}_{j}\right)^{-1}g^{n}_{j+1}\right)}
\frac{\delta\ell}{\delta\gamma}\left(g^{n}_{j},\zeta^{n+1/2}_{j},\gamma^{n}_{j+1/2}\right)  \\
&\quad 
+ \Delta t\frac{\delta\ell}{\delta\gamma}\left(g^{n}_{j-1},\zeta^{n+1/2}_{j-1},\gamma^{n}_{j-1/2}\right)
+ \frac{\Delta t}{2}\mathrm{ad}^{\ast}_{\tau^{-1}\left(\left(g^{n}_{j-1}\right)^{-1}g^{n}_{j}\right)}\frac{\delta\ell}{\delta\gamma}\left(g^{n}_{j-1},\zeta^{n+1/2}_{j-1},\gamma^{n}_{j-1/2}\right)
+ \mathcal{O}\left(\Delta t + \Delta s^{2}\right)
= 0.
\label{var_then_trun_app_vi}
\end{aligned}
\end{equation}
Keeping all terms except for the last term gives us an approximate integrator, which fully agrees with the approximate integrator \eqref{App_Integrator_1} obtained from the approximate variation formulae. Moreover, it is noteworthy that \eqref{var_then_trun_app_vi} implies that the approximate variational integrator \eqref{App_Integrator_1} is equivalent to the exact variational integrator up to the accuracy order $\mathcal{O}\left(\Delta t + \Delta s^{2}\right)$ (as shown in Fig.~\ref{Idea_Chap_4}).

\subsection{Discrete stochastic compatibility condition}\label{Sec_comp_cond}
As in the continuous-time formulation, to get a complete system one supplements the equations of motion with the compatibility condition. In this subsection, we first give a numerically \textit{exact discrete compatibility condition}, and then keep the leading-order terms in the series expansions up to a certain accuracy order to obtain an \textit{approximate compatibility condition}, which is numerically efficient to implement.
\subsubsection{Exact discrete stochastic compatibility condition}
The discrete counterpart of the stochastic compatibility condition \eqref{e9} comes from the idea of the continuous setting. Specifically, the spacetime transition $g^{n}_{j}\longrightarrow g^{n+1}_{j+1}$ could be carried out from either the space-then-time path
$$g^{n}_{j}\overset{\Delta s}{\longrightarrow} g^{n}_{j+1}\overset{\Delta t}{\longrightarrow}  g^{n+1}_{j+1}$$
or the time-then-space path
$$g^{n}_{j}\overset{\Delta t}{\longrightarrow}  g^{n+1}_{j}\overset{\Delta s}{\longrightarrow}  g^{n+1}_{j+1}.$$
That is, the following diagram commutes.
$$
\begin{tikzcd}[color=black]
	{g^{n}_{j}} & {g^{n}_{j+1}} \\
	{g^{n+1}_{j}} & {g^{n+1}_{j+1}}
	\arrow[from=1-1, to=1-2]
	\arrow[from=1-1, to=2-1]
	\arrow[from=1-2, to=2-2]
	\arrow[from=2-1, to=2-2]
\end{tikzcd}
$$
Explicitly, the diagram means that the following two equations
\begin{align*}
g^{n+1}_{j+1} = g^{n}_{j}
\tau\left(\Delta s\gamma^{n}_{j+1/2}\right)
\tau
\left(
\Delta t\zeta^{n+1/2}_{j+1} + \Theta^{n+1/2}_{j+1}
\left(W^{(M)}_{n+1,j+1} - W^{(M)}_{n,j+1}\right)
\right)
\end{align*}
and
\begin{eqnarray}
\begin{aligned}
g^{n+1}_{j+1} = g^{n}_{j}
\tau
\left(
\Delta t\zeta^{n+1/2}_{j} + \Theta^{n+1/2}_{j}
\left(W^{(M)}_{n+1,j} - W^{(M)}_{n,j}\right)
\right)
\tau\left(\Delta s\gamma^{n+1}_{j+1/2}\right)
\end{aligned}
\end{eqnarray}
are equal, which is equivalent to 
\begin{equation}
\begin{aligned}
&\quad\tau\left(\Delta s\gamma^{n}_{j+1/2}\right)
\tau
\left(
\Delta t\zeta^{n+1/2}_{j+1} + \Theta^{n+1/2}_{j+1}
\left(W^{(M)}_{n+1,j+1} - W^{(M)}_{n,j+1}\right)
\right)  \\
&=
\tau
\left(
\Delta t\zeta^{n+1/2}_{j} + \Theta^{n+1/2}_{j}
\left(W^{(M)}_{n+1,j} - W^{(M)}_{n,j}\right)
\right)
\tau\left(\Delta s\gamma^{n+1}_{j+1/2}\right).
\label{commu}
\end{aligned}
\end{equation}

\subsubsection{Approximate discrete compatibility condition}
The compatibility condition \eqref{commu} is numerically exact. However, it is computationally hard to implement in simulations due to the presence of the retraction map $\tau$, making equation \eqref{commu} live in the curved Lie group space and difficult to solve. Thus, we propose an approximate discrete compatibility condition without compromising the numerical accuracy up to $\mathcal{O}\left(\Delta t + \Delta s^{2}\right)$ as the following shows. 

Recall the Baker-Campbell-Hausdorff (BCH) formula \citep{Michor2008,hall2015lie}
$$
\tau(A)\tau(B) = 
\tau
\left(
A + B + \frac{1}{2}\left[A,B\right] 
+ \frac{1}{12}\left(\left[A,\left[A,B\right]\right] - \left[B,\left[B,A\right]\right]\right)
+ \cdots
\right),
$$
where $A$, $B\in\mathfrak{g}$ and $\tau$ is the exponential map. Then, using the BCH formula the left-hand-side of \eqref{commu} is
\begin{equation}
\begin{aligned}
&\quad\tau\left(\Delta s\gamma^{n}_{j+1/2}\right)
\tau
\left(
\Delta t\zeta^{n+1/2}_{j+1} + \Theta^{n+1/2}_{j+1}
\left(W^{(M)}_{n+1,j+1} - W^{(M)}_{n,j+1}\right)
\right)  \\ 
&= \tau
\left(
\Delta s\gamma^{n}_{j+1/2}
+
\Delta t\zeta^{n+1/2}_{j+1} + \Theta^{n+1/2}_{j+1}
\left(W^{(M)}_{n+1,j+1} - W^{(M)}_{n,j+1}\right)  \right.\\
&\qquad\quad
+ \frac{1}{2}
\left[\Delta s\gamma^{n}_{j+1/2},
\Delta t\zeta^{n+1/2}_{j+1} + \Theta^{n+1/2}_{j+1}
\left(W^{(M)}_{n+1,j+1} - W^{(M)}_{n,j+1}\right)
\right]  \\
&\qquad\quad\left.
+ \mathcal{O}\left(\Delta s\Delta t^{1/2}\right)
\right).
\label{lhs-com}
\end{aligned}
\end{equation}
Similarly, the right-hand side of \eqref{commu} is
\begin{equation}
\begin{aligned}
&\quad\tau
\left(
\Delta t\zeta^{n+1/2}_{j} + \Theta^{n+1/2}_{j}
\left(W^{(M)}_{n+1,j} - W^{(M)}_{n,j}\right)
\right)
\tau\left(\Delta s\gamma^{n+1}_{j+1/2}\right)  \\
&= \tau
\left(
\Delta t\zeta^{n+1/2}_{j} + \Theta^{n+1/2}_{j}
\left(W^{(M)}_{n+1,j} - W^{(M)}_{n,j}\right)
+ \Delta s\gamma^{n+1}_{j+1/2} \right.\\
&\qquad\quad
+ \frac{1}{2}
\left[
\Delta t\zeta^{n+1/2}_{j} + \Theta^{n+1/2}_{j}
\left(W^{(M)}_{n+1,j} - W^{(M)}_{n,j}\right),
\Delta s\gamma^{n+1}_{j+1/2}
\right]  \\
&\qquad\quad\left.
+ \mathcal{O}\left(\Delta s\Delta t^{1/2}\right)
\right).
\label{rhs-com}
\end{aligned}
\end{equation}
Substituting \eqref{lhs-com}-\eqref{rhs-com} into \eqref{commu}, the numerically exact stochastic compatibility condition \eqref{commu} is equivalent to
\begin{equation}
\begin{aligned}
&\Delta s\left(\gamma^{n+1}_{j+1/2} - \gamma^{n}_{j+1/2}\right)
- \Delta t\left(\zeta^{n+1/2}_{j+1} - \zeta^{n+1/2}_{j}\right) 
+ \Theta^{n+1/2}_{j}
\left(W^{(M)}_{n+1,j} - W^{(M)}_{n,j}\right)
-  \Theta^{n+1/2}_{j+1}
\left(W^{(M)}_{n+1,j+1} - W^{(M)}_{n,j+1}\right)  \\
&= \frac{1}{2}
\left[
\Delta s\gamma^{n+1}_{j+1/2},
\Delta t\zeta^{n+1/2}_{j} + \Theta^{n+1/2}_{j}
\left(W^{(M)}_{n+1,j} - W^{(M)}_{n,j}\right)
\right]  \\
&\qquad
+ \frac{1}{2}
\left[
\Delta s\gamma^{n}_{j+1/2},
\Delta t\zeta^{n+1/2}_{j+1} + \Theta^{n+1/2}_{j+1}
\left(W^{(M)}_{n+1,j+1} - W^{(M)}_{n,j+1}\right)
\right]
+ \mathcal{O}\left(\Delta s\Delta t^{1/2}\right).
\label{com_cond_50}
\end{aligned}
\end{equation}
By Young's inequality, we have
\begin{align*}
\Delta s\Delta t^{1/2}
\leq
\frac{\Delta s^{2}}{2} + \frac{\left(\Delta t^{1/2}\right)^{2}}{2}  
= \frac{1}{2}\left(\Delta s^{2} + \Delta t\right),
\end{align*}
which implies that the numerically exact stochastic compatibility condition \eqref{com_cond_50} reads
\begin{equation}
\begin{aligned}
&\Delta s\left(\gamma^{n+1}_{j+1/2} - \gamma^{n}_{j+1/2}\right)
- \Delta t\left(\zeta^{n+1/2}_{j+1} - \zeta^{n+1/2}_{j}\right) 
+ \Theta^{n+1/2}_{j}
\left(W^{(M)}_{n+1,j} - W^{(M)}_{n,j}\right)
-  \Theta^{n+1/2}_{j+1}
\left(W^{(M)}_{n+1,j+1} - W^{(M)}_{n,j+1}\right)  \\
&= \frac{1}{2}
\left[
\Delta s\gamma^{n+1}_{j+1/2},
\Delta t\zeta^{n+1/2}_{j} + \Theta^{n+1/2}_{j}
\left(W^{(M)}_{n+1,j} - W^{(M)}_{n,j}\right)
\right]  \\
&\qquad
+ \frac{1}{2}
\left[
\Delta s\gamma^{n}_{j+1/2},
\Delta t\zeta^{n+1/2}_{j+1} + \Theta^{n+1/2}_{j+1}
\left(W^{(M)}_{n+1,j+1} - W^{(M)}_{n,j+1}\right)
\right]
+ \mathcal{O}\left(\Delta s^{2} + \Delta t\right).
\label{com_exact}
\end{aligned}
\end{equation}
Hence, by \eqref{com_exact} the approximate discrete stochastic compatibility condition is
\begin{equation}
\begin{aligned}
&\Delta s\left(\gamma^{n+1}_{j+1/2} - \gamma^{n}_{j+1/2}\right)
- \Delta t\left(\zeta^{n+1/2}_{j+1} - \zeta^{n+1/2}_{j}\right) 
+ \Theta^{n+1/2}_{j}
\left(W^{(M)}_{n+1,j} - W^{(M)}_{n,j}\right)
-  \Theta^{n+1/2}_{j+1}
\left(W^{(M)}_{n+1,j+1} - W^{(M)}_{n,j+1}\right)  \\
&= \frac{1}{2}
\left[
\Delta s\gamma^{n+1}_{j+1/2},
\Delta t\zeta^{n+1/2}_{j} + \Theta^{n+1/2}_{j}
\left(W^{(M)}_{n+1,j} - W^{(M)}_{n,j}\right)
\right]  \\
&\qquad
+ \frac{1}{2}
\left[
\Delta s\gamma^{n}_{j+1/2},
\Delta t\zeta^{n+1/2}_{j+1} + \Theta^{n+1/2}_{j+1}
\left(W^{(M)}_{n+1,j+1} - W^{(M)}_{n,j+1}\right)
\right].
\label{d_sto_compatibility}
\end{aligned}
\end{equation}
In practical numerical implementations, we will use this approximate compatibility condition \eqref{d_sto_compatibility}. This simplifies the resulting algorithm and its implementation without affecting the quality of the algorithm up to the spatial accuracy order $\mathcal{O}\left(\Delta s^{2}\right)$ and temporal accuracy order $\mathcal{O}\left(\Delta t\right)$.

\begin{exam}[Stochastic wave equation, continued]\label{Exam_wave}
For the stochastic wave motion, the discrete action sum associated to \eqref{L_1} is
\begin{equation*}
A_{d} = 
\sum_{n=0}^{N-1}\sum_{j=0}^{J-1}
\frac{\Delta t\Delta s}{2}
\left[
\left(\zeta^{n+1/2}_{j}\right)^{2}
-
\left(\gamma^{n}_{j+1/2}\right)^{2}
\right].
\end{equation*}
Then, the Type-I stochastic integrator given by \eqref{dis_kin_eq}, \eqref{dis_spatial_equation}, \eqref{App_Integrator_1}, and \eqref{d_sto_compatibility} reads
\begin{equation}
\begin{aligned}
&
\Delta s\left(\zeta^{n-1/2}_{j} - \zeta^{n+1/2}_{j}\right)
-
\Delta t\left(\gamma^{n}_{j-1/2} - \gamma^{n}_{j+1/2}\right)
= 0, \\
&
\Delta s\left(\gamma^{n+1}_{j+\frac{1}{2}} - \gamma^{n}_{j+\frac{1}{2}}\right)
- \Delta t\left(\zeta^{n+\frac{1}{2}}_{j+1} - \zeta^{n+\frac{1}{2}}_{j}\right) 
+ \Theta^{n+\frac{1}{2}}_{j}
\left(W^{(M)}_{n+1,j} - W^{(M)}_{n,j}\right)
-  \Theta^{n+\frac{1}{2}}_{j+1}
\left(W^{(M)}_{n+1,j+1} - W^{(M)}_{n,j+1}\right)
= 0, \\
&
\phi^{n+1}_{j} - \phi^{n}_{j}
-
\Delta t \zeta^{n+1/2}_{j}
-
\Theta^{n+1/2}_{j}
\left(
W^{(M)}_{n+1,j} - W^{(M)}_{n,j}
\right)
= 0, \\
&\phi^{n}_{j+1} - \phi^{n}_{j}
-
\Delta s\gamma^{n}_{j+1/2}
= 0, \\
& \gamma^{n}_{1/2} = \gamma^{n}_{J+1/2} = 0,
\label{e11}
\end{aligned}
\end{equation}
which is a stochastic deformation of the deterministic staggered leapfrog scheme for the wave equation \citep{MoMa2005}. Moreover, if one interprets $\zeta$ and $\gamma$ as the electric and magnetic flux densities, respectively, equations \eqref{e11} become a stochastic extension of the well-known Yee's scheme \citep{Yee1966} in computational electromagnetism for the stochastic fields that depend on a single spatial variable.

Moreover, in this trivial Lie group case, the operations $\mathrm{Ad}^{*}$ and $\left[\mathrm{d}\tau^{-1}\right]^{*}$ specialize to identity maps and the adjoint action $\mathrm{ad}$ vanishes, which implies that the exact variation \eqref{var_zeta_2}-\eqref{var_gamma_2} are the same as the approximate variation formulae \eqref{app_variation_formula_1}. Thus, the approximate integrator \eqref{App_Integrator_1} coincides with the exact integrator \eqref{without_truncation_exact} in this case.
\end{exam}

\begin{exam}[Stochastic geometrically exact beam]
For the stochastic geometrically exact beam, the discrete action sum reads
\begin{equation*}
A_{d} = \sum_{n=0}^{N-1}\sum_{j=0}^{J-1}\Delta t\Delta s
\ell\left(g^{n}_{j},\zeta^{n+1/2}_{j},\gamma^{n}_{j+1/2}\right),
\end{equation*}
where $g^{n}_{j}\in SE(3)$ and $\zeta^{n+1/2}, \gamma^{n}_{j+1/2}\in\mathfrak{se}(3)$. The resulting Type-I approximate integrator reads
\begin{align*}
&\Delta s
\left(
\mathcal{K}\zeta^{n-\frac{1}{2}}_{j} - \mathcal{K}\zeta^{n+\frac{1}{2}}_{j}
\right)
-
\Delta t
\left(
\mathcal{P}\gamma^{n}_{j-\frac{1}{2}}
-
\mathcal{P}\gamma^{n}_{j+\frac{1}{2}}
\right)  
-
\frac{\Delta t}{2}
\mathrm{ad}^{*}_{
\Delta s\gamma^{n}_{j+\frac{1}{2}}
}
\mathcal{P}\gamma^{n}_{j+\frac{1}{2}}
-
\frac{\Delta t}{2}
\mathrm{ad}^{*}_{
\Delta s\gamma^{n}_{j-\frac{1}{2}}
}
\mathcal{P}\gamma^{n}_{j-\frac{1}{2}}  \\
& 
+
\frac{\Delta s}{2}
\mathrm{ad}^{*}_{
\Delta t\zeta^{n+\frac{1}{2}}_{j} 
+ 
\Theta^{n+\frac{1}{2}}_{j}
\left(W^{(M)}_{n+1,j} - W^{(M)}_{n,j}\right)
}
\mathcal{K}\zeta^{n+\frac{1}{2}}_{j}
+
\frac{\Delta s}{2}
\mathrm{ad}^{*}_{
\Delta t\zeta^{n-\frac{1}{2}}_{j} 
+ 
\Theta^{n-\frac{1}{2}}_{j}
\left(W^{(M)}_{n,j} - W^{(M)}_{n-1,j}\right)
}
\mathcal{K}\zeta^{n-\frac{1}{2}}_{j}
= 0,
\end{align*}
together with the discrete stochastic compatibility condition
\begin{align*}
&\Delta s\left(\gamma^{n+1}_{j+1/2} - \gamma^{n}_{j+1/2}\right)
- \Delta t\left(\zeta^{n+1/2}_{j+1} - \zeta^{n+1/2}_{j}\right) 
+ \Theta^{n+1/2}_{j}
\left(W^{(M)}_{n+1,j} - W^{(M)}_{n,j}\right)
-  \Theta^{n+1/2}_{j+1}
\left(W^{(M)}_{n+1,j+1} - W^{(M)}_{n,j+1}\right)  \\
&= \frac{1}{2}
\left[
\Delta s\gamma^{n+1}_{j+1/2},
\Delta t\zeta^{n+1/2}_{j} + \Theta^{n+1/2}_{j}
\left(W^{(M)}_{n+1,j} - W^{(M)}_{n,j}\right)
\right]  \\
&\qquad
+ \frac{1}{2}
\left[
\Delta s\gamma^{n}_{j+1/2},
\Delta t\zeta^{n+1/2}_{j+1} + \Theta^{n+1/2}_{j+1}
\left(W^{(M)}_{n+1,j+1} - W^{(M)}_{n,j+1}\right)
\right].
\end{align*}
\end{exam}

\subsection{Discrete stochastic Hamilton-Pontryagin principle (Type-II integrator)}\label{d_hph}
The second way of variational discretization is based on the stochastic potential and corresponds to the continuous-time variational principle proposed in Section~\ref{Hamel_HPH}, which results in the following so-called Type-II integrator. In this setting, one can either construct the \textit{exact} variational integrator or the \textit{approximate} integrator as discussed in Section~\ref{d_ham}. Since it has been clarified in detail the relation between the exact and approximate variational integrators, in this section we directly construct the Type-II approximate integrator, which will be used for numerical simulations in Section~\ref{NR}. The discrete temporal and spatial steps are $\Delta t>0$ and $\Delta s>0$, respectively. The discrete curve approximating $\left\{\left(g,\zeta,\mu,\gamma,\sigma\right)\right\}\subseteq G\times \left(\mathfrak{g}\times \mathfrak{g}^{\ast}\right)^{2}$ is
\begin{equation*}
c_{d} =
\left\{
\left(g^{0}_{j},\zeta^{1/2}_{j},\mu^{1/2}_{j},\gamma^{0}_{j+1/2},\sigma^{0}_{j+1/2},g^{1}_{j},\cdots,g^{N-1}_{j},\zeta^{N-1/2}_{j},\mu^{N-1/2}_{j},\gamma^{N-1}_{j+1/2},\sigma^{N-1}_{j+1/2},g^{N}_{j}\right)
\right\},
\end{equation*}
which is in the space
\begin{equation*}
C_{d}(N,J) = \left(G\times\left(\left(\mathfrak{g}\oplus \mathfrak{g}^{\ast}\right)\times\left(\mathfrak{g}\oplus \mathfrak{g}^{\ast}\right)\times G\right)^{N}\right)^{J},
\end{equation*}
where $g^{n}_{j}$ is defined by $g^{n}_{j} := g(t_{n}, s_{j})$, $\zeta^{n+1/2}_{j}$ and $\mu^{n+1/2}_{j}$ are approximations of $\zeta$ and $\mu$ at $\left(t_{n+1/2}, s_{j}\right) := \left(\frac{t_{n}+t_{n+1}}{2}, s_{j}\right)$, respectively, that is,
\begin{equation*}
\zeta^{n+1/2}_{j} \approx \zeta\left(\frac{t_{n}+t_{n+1}}{2}, s_{j}\right)
\quad
\text{and}
\quad
\mu^{n+1/2}_{j} \approx \mu\left(\frac{t_{n}+t_{n+1}}{2}, s_{j}\right),
\end{equation*}
and $\gamma^{n}_{j+1/2}$ and $\sigma^{n}_{j+1/2}$ are approximations of $\gamma$ and $\sigma$ at $\left(t_{n},s_{j+1/2}\right) := \left(t_{n}, \frac{s_{j} + s_{j+1}}{2}\right)$, respectively, i.e.,
\begin{equation*}
\gamma^{n}_{j+1/2} \approx \gamma\left(t_{n}, \frac{s_{j} + s_{j+1}}{2}\right)
\quad
\text{and}
\quad
\sigma^{n}_{j+1/2} \approx \sigma\left(t_{n}, \frac{s_{j} + s_{j+1}}{2}\right).
\end{equation*}
Define the discrete action sum that approximates the action functional \eqref{action_fun} by
\begin{equation}
\begin{aligned}
A_{d} &= \sum_{n=0}^{N-1}\sum_{j=0}^{J-1}
\left[
\Delta t\Delta s\ell\left(g^{n}_{j}, \zeta^{n+1/2}_{j}, \gamma^{n}_{j+1/2}\right)
+ \Delta s\left<\mu^{n+1/2}_{j}, 
\tau^{-1}\left(\left(g^{n}_{j}\right)^{-1}g^{n+1}_{j}\right)
- \Delta t\zeta^{n+1/2}_{j}\right>  \right.\\
& \left.\qquad\qquad\quad
+ \Delta t\left<\sigma^{n}_{j+1/2}, 
\tau^{-1}\left(\left(g^{n}_{j}\right)^{-1}g^{n}_{j+1}\right)
- \Delta s\gamma^{n}_{j+1/2}\right>
- \Delta s\Phi\left(g^{n}_{j}, \mu^{n+1/2}_{j}\right)
\left(
W^{(M)}_{n+1,j} - W^{(M)}_{n,j}
\right)
\right].
\end{aligned}
\label{action_sum}
\end{equation}

\begin{theorem}[Discrete stochastic Hamilton-Pontryagin principle]\label{Theorem_Type_II}
The following statements are equivalent:
\begin{itemize}
    \item [(i)] The discrete stochastic curve
\begin{equation}
c_{d} =
\left\{
\left(g^{0}_{j},\zeta^{1/2}_{j},\mu^{1/2}_{j},\gamma^{0}_{j+1/2},\sigma^{0}_{j+1/2},g^{1}_{j},\cdots,g^{N-1}_{j},\zeta^{N-1/2}_{j},\mu^{N-1/2}_{j},\gamma^{N-1}_{j+1/2},\sigma^{N-1}_{j+1/2},g^{N}_{j}\right)
\right\}
\label{dis_sto_curve_hp}
\end{equation}
satisfies the following discrete stochastic equations:
\begin{numcases}{}
\diamond\hspace{0.2em}\text{\sc Discrete stochastic temporal kinematic equation:} & \label{dis_tem_eqn_2}\hfill\\
\quad \delta\mu^{n+1/2}_{j}: \tau^{-1}\left(\left(g^{n}_{j}\right)^{-1}g^{n+1}_{j}\right)
- \Delta t\zeta^{n+1/2}_{j}
- \frac{\delta\Phi}{\delta\mu}\left(g^{n}_{j}, \mu^{n+1/2}_{j}\right)
\left(
W^{(M)}_{n+1,j} - W^{(M)}_{n,j}
 \right) = 0, & \notag\hfill\\
\diamond\hspace{0.2em}\text{\sc Discrete stochastic spatial kinematic equation:} & \hfill\\
\quad \delta\sigma^{n}_{j+1/2}: \tau^{-1}\left(\left(g^{n}_{j}\right)^{-1}g^{n}_{j+1}\right)
- \Delta s\gamma^{n}_{j+1/2} = 0, & \notag\hfill\\
\diamond\hspace{0.2em}\text{\sc Discrete stochastic temporal Legendre transform:} & \label{leg_tem} \hfill\\
\quad \delta \zeta^{n+1/2}_{j}: \mu^{n+1/2}_{j} = \frac{\delta\ell}{\delta\zeta}\left(g^{n}_{j}, \zeta^{n+1/2}_{j}, \gamma^{n}_{j+1/2}\right), &   \notag\hfill\\
\diamond\hspace{0.2em}\text{\sc Discrete stochastic spatial Legendre transform:} & \label{leg_spa}\hfill\\
\quad \delta \gamma^{n}_{j+1/2}: \sigma^{n}_{j+1/2} = \frac{\delta\ell}{\delta\gamma}\left(g^{n}_{j}, \zeta^{n+1/2}_{j}, \gamma^{n}_{j+1/2}\right), &  \notag\hfill\\
\diamond\hspace{0.2em}\text{\sc Discrete stochastic implicit field-theoretic equation:} & \label{imp_ham}\hfill\\
\quad \eta^{n}_{j}: \Delta s\left(\mu^{n-1/2}_{j} - \mu^{n+1/2}_{j}\right)
+ \Delta t\left(\sigma^{n}_{j-1/2} - \sigma^{n}_{j+1/2}\right)  & \notag\hfill\\
\qquad\quad
+ \frac{\Delta s}{2}
\mathrm{ad}^{*}_{
\tau^{-1}\left(\left(g^{n}_{j}\right)^{-1}g^{n+1}_{j}\right)
}\mu^{n+1/2}_{j}
+ \frac{\Delta s}{2}
\mathrm{ad}^{*}_{
\tau^{-1}\left(\left(g^{n-1}_{j}\right)^{-1}g^{n}_{j}\right)
}\mu^{n-1/2}_{j}  &  \notag\hfill  \\
\qquad\quad
+ \frac{\Delta t}{2}
\mathrm{ad}^{*}_{
\tau^{-1}\left(\left(g^{n}_{j}\right)^{-1}g^{n}_{j+1}\right)
}\sigma^{n}_{j+1/2}
+ \frac{\Delta t}{2}
\mathrm{ad}^{*}_{
\tau^{-1}\left(\left(g^{n}_{j-1}\right)^{-1}g^{n}_{j}\right)
}\sigma^{n}_{j-1/2}  & \notag\hfill\\
\qquad\quad
- \Delta s T_{e}L^{\ast}_{g^{n}_{j}}
\frac{\delta\Phi}{\delta g}\left(g^{n}_{j}, \mu^{n+1/2}_{j}\right)
\left(
W^{(M)}_{n+1,j} - W^{(M)}_{n,j}
\right)
+ \Delta t\Delta s T_{e}L^{\ast}_{g^{n}_{j}}
\frac{\delta\ell}{\delta g}\left(g^{n}_{j}, \zeta^{n+1/2}_{j}, \gamma^{n}_{j+1/2}\right)
= 0.  & \notag \hfill
\end{numcases}

    \item[(ii)] The discrete stochastic curve \eqref{dis_sto_curve_hp} is a critical point of the discrete action sum \eqref{action_sum}, that is, $\delta A_{d} = 0$, 
    with independent variations $\delta\zeta^{n+1/2}_{j}$, $\delta\mu^{n+1/2}_{j}$, $\delta\sigma^{n}_{j+1/2}$, $\delta\gamma^{n}_{j+1/2}$, $\eta^{n}_{j}$, 
    and variations of $\tau^{-1}\left(\left(g^{n}_{j}\right)^{-1}g^{n+1}_{j}\right)$ and $\tau^{-1}\left(\left(g^{n}_{j}\right)^{-1}g^{n}_{j+1}\right)$ approximated as
    \begin{equation}
    \begin{aligned}
    \delta \left[\tau^{-1}\left(\left(g^{n}_{j}\right)^{-1}g^{n+1}_{j}\right)\right]
    &=
    \eta^{n+1}_{j} - \eta^{n}_{j}
    + \mathrm{ad}_{
    \tau^{-1}\left(\left(g^{n}_{j}\right)^{-1}g^{n+1}_{j}\right)
    }\eta^{n+1/2}_{j},  \\
    \delta \left[\tau^{-1}\left(\left(g^{n}_{j}\right)^{-1}g^{n}_{j+1}\right)\right]
    &=
    \eta^{n}_{j+1} - \eta^{n}_{j}
    + \mathrm{ad}_{
    \tau^{-1}\left(\left(g^{n}_{j}\right)^{-1}g^{n}_{j+1}\right)
    }\eta^{n}_{j+1/2},  
    \label{approx_variation_Type_2}
    \end{aligned}
    \end{equation}
    together with the fixed point condition $\eta^{0}_{j} = \eta^{N}_{j} = 0$, $j=1,...,J$.
\end{itemize}
\end{theorem}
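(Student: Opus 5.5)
The plan is to compute $\delta A_d$ directly from the discrete action sum \eqref{action_sum} using the approximate variations \eqref{approx_variation_Type_2}. I then reorganize the result as a sum of pairings of each independent variation with a coefficient. The equivalence (i)$\Leftrightarrow$(ii) follows because every variation $\delta\zeta^{n+1/2}_j,\delta\mu^{n+1/2}_j,\delta\gamma^{n}_{j+1/2},\delta\sigma^{n}_{j+1/2}$ and every interior $\eta^n_j$ ($1\le n\le N-1$) is arbitrary. So $\delta A_d=0$ holds if and only if each coefficient vanishes, and each coefficient is exactly one of the listed equations.

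First I treat the variations that do not involve $\eta$. Varying $\mu^{n+1/2}_j$ gives $\Delta s$ times $\tau^{-1}((g^n_j)^{-1}g^{n+1}_j)-\Delta t\zeta^{n+1/2}_j-\frac{\delta\Phi}{\delta\mu}(W^{(M)}_{n+1,j}-W^{(M)}_{n,j})$, which is \eqref{dis_tem_eqn_2}. Varying $\sigma^n_{j+1/2}$ gives the spatial kinematic equation. Varying $\zeta^{n+1/2}_j$ gives $\Delta t\Delta s(\frac{\delta\ell}{\delta\zeta}-\mu^{n+1/2}_j)$, which is \eqref{leg_tem}. Varying $\gamma^n_{j+1/2}$ gives $\Delta t\Delta s(\frac{\delta\ell}{\delta\gamma}-\sigma^n_{j+1/2})$, which is \eqref{leg_spa}. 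These four steps are immediate.

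Next I collect the $\eta$-terms. The sources are $\Delta t\Delta s\langle T_e^*L_{g^n_j}\frac{\delta\ell}{\delta g},\eta^n_j\rangle$ and $-\Delta s\langle T_e^*L_{g^n_j}\frac{\delta\Phi}{\delta g},\eta^n_j\rangle(W^{(M)}_{n+1,j}-W^{(M)}_{n,j})$, both written via $\delta g=g\eta$. There are also two terms built from the approximate variations. The first is $\Delta s\langle\mu^{n+1/2}_j,\eta^{n+1}_j-\eta^n_j+\mathrm{ad}_{\xi^{n}_j}\eta^{n+1/2}_j\rangle$ with $\xi^n_j=\tau^{-1}((g^n_j)^{-1}g^{n+1}_j)$. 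The second is the analogous spatial term with $\sigma^n_{j+1/2}$ and $\chi^n_j=\tau^{-1}((g^n_j)^{-1}g^n_{j+1})$. Here I interpret the midpoint quantities as averages, $\eta^{n+1/2}_j=\frac12(\eta^n_j+\eta^{n+1}_j)$ and $\eta^n_{j+1/2}=\frac12(\eta^n_j+\eta^n_{j+1})$; this is the convention consistent with the factors $\frac12$ in \eqref{imp_ham}. Moving $\mathrm{ad}$ to the dual side gives $\frac12\langle\mathrm{ad}^*_{\xi^n_j}\mu^{n+1/2}_j,\eta^n_j+\eta^{n+1}_j\rangle$.

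The key step, and the only real bookkeeping obstacle, is the discrete summation by parts. I shift the index $n\mapsto n-1$ in the terms containing $\eta^{n+1}_j$, and $j\mapsto j-1$ in those containing $\eta^n_{j+1}$. This turns $\sum_n\langle\mu^{n+1/2}_j,\eta^{n+1}_j-\eta^n_j\rangle$ into $\sum_n\langle\mu^{n-1/2}_j-\mu^{n+1/2}_j,\eta^n_j\rangle$ plus temporal boundary terms, which vanish by the fixed-point condition $\eta^0_j=\eta^N_j=0$. The averaged $\mathrm{ad}^*$ terms split into the pair $\frac{\Delta s}{2}\mathrm{ad}^*_{\xi^n_j}\mu^{n+1/2}_j+\frac{\Delta s}{2}\mathrm{ad}^*_{\xi^{n-1}_j}\mu^{n-1/2}_j$, and likewise for the spatial terms with $\sigma^n_{j\pm1/2}$. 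Collecting the coefficient of $\eta^n_j$ reproduces \eqref{imp_ham} term by term.

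The spatial summation by parts leaves terms at $j=0$ and $j=J$. These give boundary conditions analogous to \eqref{App_boun_con_1}, assuming the paper's convention that they are imposed separately or that $\eta$ satisfies suitable spatial boundary conditions. I would state this explicitly, mirroring the proof of Theorem~\ref{Theorem_App_Int} in Appendix~\ref{Proof_App_Int}. The converse direction is the same computation read backwards: if all of \eqref{dis_tem_eqn_2}--\eqref{imp_ham} hold, every coefficient in $\delta A_d$ vanishes, so $\delta A_d=0$.
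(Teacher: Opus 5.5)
Your proposal is correct and follows the same route as the paper: vary the action sum \eqref{action_sum} using the approximate variations \eqref{approx_variation_Type_2}, shift indices in $n$ and $j$ (discrete summation by parts), remove the temporal boundary terms with $\eta^{0}_{j}=\eta^{N}_{j}=0$, and read off the coefficient of each independent variation. Your explicit midpoint convention $\eta^{n+1/2}_{j}=\tfrac12(\eta^{n}_{j}+\eta^{n+1}_{j})$ matches Appendix~\ref{app_variation}, and your flagging of the leftover spatial boundary terms at $j=0,J$ is more careful than the paper, which lumps them into ``Boundary terms'' without comment.
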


\begin{proof}
By taking the variation of \eqref{action_sum} and using \eqref{approx_variation_Type_2}, we have
\begin{align*}
\delta A_{d}
&= \sum_{n=1}^{N-1}\sum_{j=1}^{J-1}
\left[
\Delta t\Delta s\left<\frac{\delta\ell}{\delta g}\left(g^{n}_{j}, \zeta^{n+1/2}_{j}, \gamma^{n}_{j+1/2}\right), \delta g^{n}_{j}\right>
+
\Delta t\Delta s\left<\frac{\delta\ell}{\delta\zeta}\left(g^{n}_{j}, \zeta^{n+1/2}_{j}, \gamma^{n}_{j+1/2}\right), \delta\zeta^{n+1/2}_{j}\right>  \right.  \\
&\qquad
+
\Delta t\Delta s\left<\frac{\delta\ell}{\delta\gamma}\left(g^{n}_{j}, \zeta^{n+1/2}_{j}, \gamma^{n}_{j+1/2}\right), \delta\gamma^{n}_{j+1/2}\right>
+
\left<\delta\mu^{n+1/2}_{j}, 
\tau^{-1}\left(\left(g^{n}_{j}\right)^{-1}g^{n+1}_{j}\right)
- \Delta t\zeta^{n+1/2}_{j}\right>\Delta s  \\
&\qquad
+
\left<\mu^{n+1/2}_{j}, 
\eta^{n+1}_{j} - \eta^{n}_{j}
+ \left[
\tau^{-1}\left(\left(g^{n}_{j}\right)^{-1}g^{n+1}_{j}\right), \eta^{n+1/2}_{j}
\right]
- \Delta t\delta\zeta^{n+1/2}_{j}\right>\Delta s  \\
&\qquad
+
\left<\delta\sigma^{n}_{j}, 
\tau^{-1}\left(\left(g^{n}_{j}\right)^{-1}g^{n}_{j+1}\right)
- \Delta s\gamma^{n}_{j+1/2}\right>\Delta t  \\
&\qquad
+
\left<\sigma^{n}_{j+1/2}, \eta^{n}_{j+1} - \eta^{n}_{j}
+ \left[
\tau^{-1}\left(\left(g^{n}_{j}\right)^{-1}g^{n}_{j+1}\right), \eta^{n}_{j+1/2}
\right]
- \Delta s\delta\gamma^{n}_{j+1/2}\right>\Delta t  \\
&\qquad \left.
-
\Delta s
\left(
\left<\frac{\delta\Phi}{\delta g}\left(g^{n}_{j}, \mu^{n+1/2}_{j}\right), \delta g^{n}_{j}\right>
+ \left<\frac{\delta\Phi}{\delta \mu}\left(g^{n}_{j}, \mu^{n+1/2}_{j}\right), \delta \mu^{n+1/2}_{j}\right>
\right)
\left(
W^{(M)}_{n+1,j} - W^{(M)}_{n,j}
\right)
\right]  \\
& + \text{Boundary terms},
\end{align*}
where we have rearranged the indices $n$ and $j$. By the boundary condition $\delta g^{0}_{j} = \delta g^{N}_{j} = 0$, $j=1,...,J$, (or equivalently $\eta^{0}_{j} = \eta^{N}_{j} = 0$, $j=1,...,J$), and the arbitrariness of $\delta\zeta^{n+1/2}_{j}$, $\delta\gamma^{n}_{j+1/2}$, $\delta\mu^{n+1/2}_{j}$, and $\delta\sigma^{n}_{j+1/2}$, we obtain the desired result.
\end{proof}

The full dynamics is given by \eqref{dis_tem_eqn_2}-\eqref{imp_ham}, together with the discrete stochastic compatibility condition
\begin{equation}
\begin{aligned}
&\Delta s\left(\gamma^{n+1}_{j+1/2} - \gamma^{n}_{j+1/2}\right)
- \Delta t\left(\zeta^{n+1/2}_{j+1} - \zeta^{n+1/2}_{j}\right)  \\
&\quad
+ \frac{\delta\Phi}{\delta\mu}\left(g^{n}_{j},\mu^{n+1/2}_{j}\right)
\left(W^{(M)}_{n+1,j} - W^{(M)}_{n,j}\right)
-  \frac{\delta\Phi}{\delta\mu}\left(g^{n}_{j+1},\mu^{n+1/2}_{j+1}\right)
\left(W^{(M)}_{n+1,j+1} - W^{(M)}_{n,j+1}\right)  \\
&\quad
- \frac{1}{2}
\left[
\Delta s\gamma^{n+1}_{j+1/2},
\Delta t\zeta^{n+1/2}_{j} + \frac{\delta\Phi}{\delta\mu}\left(g^{n}_{j},\mu^{n+1/2}_{j}\right)
\left(W^{(M)}_{n+1,j} - W^{(M)}_{n,j}\right)
\right]  \\
&\quad
- \frac{1}{2}
\left[
\Delta s\gamma^{n}_{j+1/2},
\Delta t\zeta^{n+1/2}_{j+1} + \frac{\delta\Phi}{\delta\mu}\left(g^{n}_{j+1},\mu^{n+1/2}_{j+1}\right)
\left(W^{(M)}_{n+1,j+1} - W^{(M)}_{n,j+1}\right)
\right]
= 0.
\label{dis_compatibility_2}
\end{aligned}
\end{equation}

\begin{rmk}\label{equi_discrete}
By discretizing the stochastic Hamilton's principle, we have obtained the Type-I integrator (and its variants) in Section~\ref{d_ham}. On the other hand, by discretizing the stochastic Hamilton-Pontryagin principle, we have obtained the Type-II integrator in Section~\ref{d_hph}. It is natural to consider the relation between the obtained integrators. In fact, comparing the stochastic integrators in Theorem~\ref{Theorem_App_Int} and \ref{Theorem_Type_II} term by term, one finds that they are equivalent if and only if
\begin{equation*}
\frac{\delta\Phi}{\delta g}\left(g^{n}_{j},\mu^{n+1/2}_{j}\right) = 0
\qquad
\text{and}
\qquad
\frac{\delta\Phi}{\delta \mu}\left(g^{n}_{j},\mu^{n+1/2}_{j}\right)
= \Theta^{n+1/2}_{j},
\end{equation*}
which is equivalent to 
\begin{equation*}
    \Phi\left(g^{n}_{j},\mu^{n+1/2}_{j}\right) = \left<\mu^{n+1/2}_{j},\Theta^{n+1/2}_{j}\right> + C,
\end{equation*}
where $C\in\mathbb{R}$ is a constant, which is a discrete counterpart of the equivalence condition \eqref{equivalence}.
\end{rmk}

\begin{exam}[Discrete stochastic wave equations, continued]
The type-II stochastic integrator \eqref{dis_tem_eqn_2}-\eqref{dis_compatibility_2} for the stochastic wave system becomes
\begin{align*}
&
\Delta s \left(\mu^{n-1/2}_{j} - \mu^{n+1/2}_{j}\right)
+ \left(\sigma^{n}_{j-1/2} - \sigma^{n}_{j+1/2}\right)\Delta t
- \Delta s\frac{\delta\Phi}{\delta\phi}\left(\phi^{n}_{j},\mu^{n+1/2}_{j}\right)
\left(
W^{(M)}_{n+1,j} - W^{(M)}_{n,j}
\right)
= 0, \\
&\Delta s\left(\gamma^{n+1}_{j+1/2} - \gamma^{n}_{j+1/2}\right)
- \Delta t\left(\zeta^{n+1/2}_{j+1} - \zeta^{n+1/2}_{j}\right)  \\
&\quad
+ \frac{\delta\Phi}{\delta\mu}\left(\phi^{n}_{j},\mu^{n+1/2}_{j}\right)
\left(W^{(M)}_{n+1,j} - W^{(M)}_{n,j}\right)
-  \frac{\delta\Phi}{\delta\mu}\left(\phi^{n}_{j+1},\mu^{n+1/2}_{j+1}\right)
\left(W^{(M)}_{n+1,j+1} - W^{(M)}_{n,j+1}\right) = 0,  \\
&\phi^{n+1}_{j} - \phi^{n}_{j}
- \Delta t\zeta^{n+1/2}_{j}
- \frac{\delta\Phi}{\delta\mu}\left(\phi^{n}_{j},\mu^{n+1/2}_{j}\right)
\left(
W^{(M)}_{n+1,j} - W^{(M)}_{n,j}
\right) = 0, \\
&\phi^{n}_{j+1} - \phi^{n}_{j}
- \Delta s\gamma^{n}_{j+1/2} = 0, \\
&\mu^{n+1/2}_{j} = \zeta^{n+1/2}_{j}, \\
&\sigma^{n}_{j+1/2} = - \gamma^{n}_{j+1/2}, \\
&
\gamma^{n}_{1/2} = \gamma^{n}_{J+1/2} = 0.
\end{align*}
In particular, when the stochastic potential $\Phi$ is taken to be zero, the above integrator specializes to the well-known Yee's scheme \citep{Yee1966} in the deterministic setting.
\end{exam}

\subsection{Numerical updating scheme}
The proposed two types of stochastic integrators share a common numerical updating scheme, which enables parallel computation and space-time staggered propagation as the following Fig.~\ref{num_scheme} shows. The discrete configurations $g^{n}_{j}$, the velocities $\zeta^{n+1/2}_{j}$, and the strains $\gamma^{n}_{j+1/2}$ are represented by circles $\bigcirc$, squares $\square$, and triangles $\triangle$, respectively. It is seen in Fig.~\ref{num_scheme} that the update of the temporal velocity $\zeta^{n+1/2}_{j}$ only depends on adjacent states as implied by \eqref{App_Integrator_1} and \eqref{imp_ham}. Similarly, the update of the spatial stain $\gamma^{n}_{j+1/2}$ only depends on its adjacent states. This \textit{space-time staggered} scheme is important, because it facilitates parallel computation, which is critical for fast simulations.

\begin{figure}[pos=ht]
\centering
\vspace{0.5em}
\begin{overpic}[width = 0.75\textwidth]{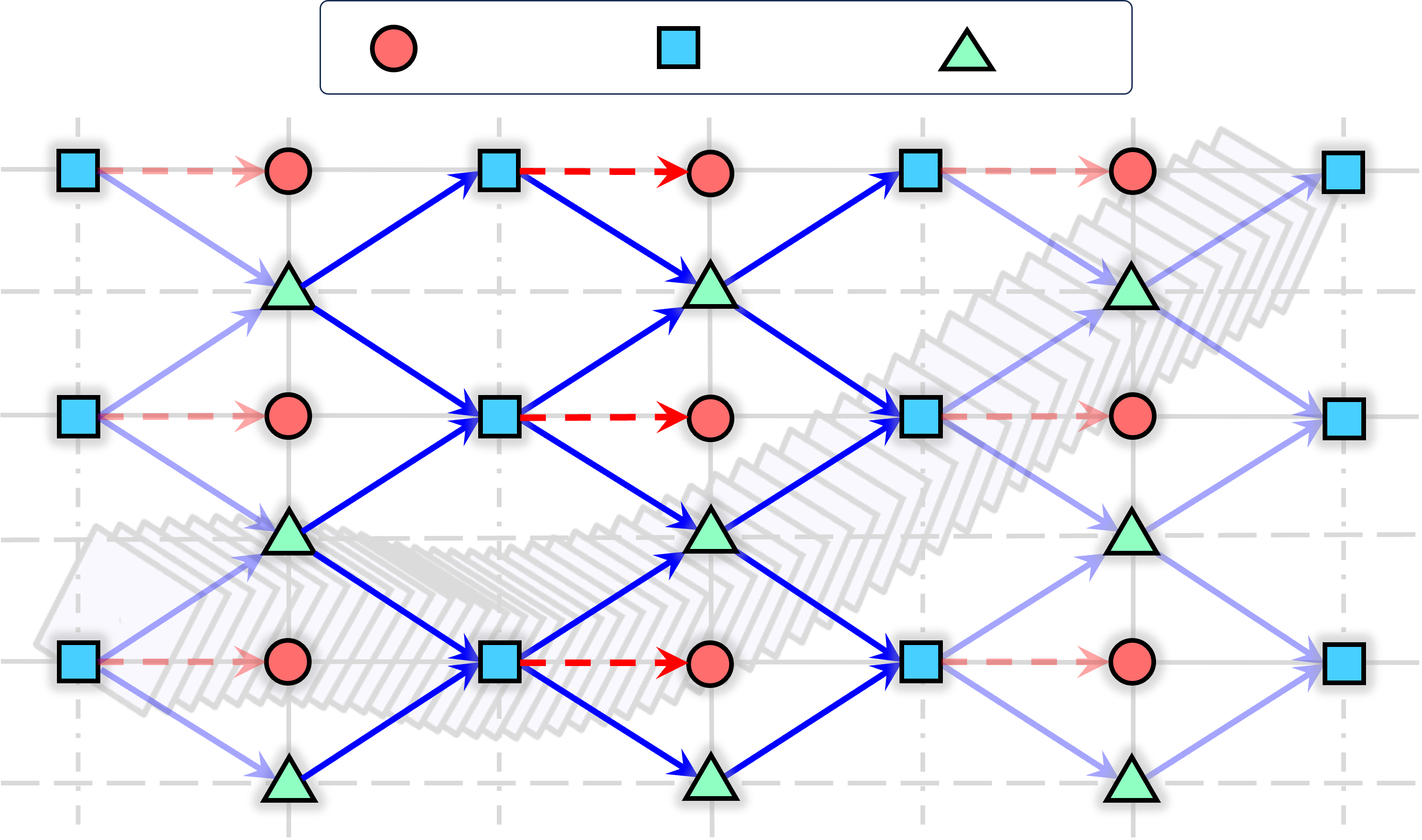}
\put(0,52){{\color{red}$\longrightarrow$} \footnotesize Time}
\put(0,52){$\bullet$}
\put(-0.2,54){{\color{blue}$\big\uparrow$}}
\put(-1,58){\footnotesize Space}
\put(30.5,55){\large $g^{n}_{j}$}
\put(50,55){\large $\zeta^{n+1/2}_{j}$}
\put(70.5,55){\large $\gamma^{n}_{j+1/2}$}
\put(-6,46.5){$j+1$}
\put(-6,38){$j+\frac{1}{2}$}
\put(-2,29.5){$j$}
\put(-6,20.5){$j-\frac{1}{2}$}
\put(-6,11.6){$j-1$}
\put(-6,3.5){$j-\frac{3}{2}$}
\put(37,48.8){$\zeta^{n-1/2}_{j+1}$}
\put(36.3,32.5){$\zeta^{n-1/2}_{j}$}
\put(63.5,32.8){$\zeta^{n+1/2}_{j}$}
\put(78.5,32.8){$g^{n+1}_{j}$}
\put(48.5,42){$\gamma^{n}_{j+1/2}$}
\put(48.5,25){$\gamma^{n}_{j-1/2}$}
\put(17.5,-2){$n-1$}
\put(31.5,-2){$n-1/2$}
\put(49.5,-2){$n$}
\put(61.5,-2){$n+1/2$}
\put(77,-2){$n+1$}
\end{overpic}
\vspace{0.5em}
\caption{Space-time staggered updating scheme of the proposed stochastic integrators.}
\label{num_scheme}
\vspace{-1em}
\end{figure}

\section{Numerical results}\label{NR}
In this section, we utilize the proposed stochastic integrators to simulate the stochastic geometrically exact beam and the stochastic wave equations. The proposed methods are implemented on the platform MATLAB$^{\circledR}$ R2022a with a computer equipped with 11th Gen Intel(R) Core(TM) i7-11800H $@$2.30 GHz processor and 32.0 GB RAM. More descriptions and accompanied videos can be found on the website: \url{https://tianzhi-li.github.io/Stochastic-Beam}

\subsection{Stochastic wave equations}\label{Sec_Sto_Wave}
For the example of wave equations, the simulation duration time is $[0,20]s$. The temporal and spatial steps are chosen as $\Delta t = 2\times 10^{-4}$ $s$ and $\Delta s = \frac{2\pi}{100}$ $m$, respectively. The total iteration number is $N=100000$. The system is initiated with the initial positions $\phi^{0}_{j} = \cos(j\Delta s)$ and the initial velocities  $\zeta^{1/2}_{j} = 0$, for $j=1,...,J$. The stochastic perturbations are described by the diffusivity vector fields 
\begin{align*}
\Theta = 5\times 10^{-4},
\quad
10^{-3},
\quad 
5\times 10^{-3},
\quad
\text{and}
\quad
10^{-2}\in\mathbb{R}
\end{align*}
for the Type-I stochastic integrator introduced in \S~\ref{d_ham}, and the stochastic potentials 
\begin{equation}
\Phi\left(g,\mu\right) = \int_{0}^{S}5\times10^{-4}\cdot\mu ds, 
\quad
\int_{0}^{S}10^{-3}\cdot\mu ds, 
\quad
\int_{0}^{S}5\times10^{-3}\cdot\mu ds, 
\quad
\mathrm{and}
\quad
\int_{0}^{S}10^{-2}\cdot\mu ds.
\label{sto_potential_wave}
\end{equation}
for the Type-II stochastic integrator introduced in \S~\ref{d_hph}. In this case, it follows from Remark~\ref{equi_discrete} that the Type-I and II integrators are equivalent.

\begin{figure*}[pos=t]
    \begin{subfigure}[t]{0.348\textwidth}
    \centering
    \begin{minipage}[t]{0.05\textwidth}  
        \vspace{0.95cm}
        \rotatebox{90}{\textbf{Deterministic}}
    \end{minipage}
    \begin{minipage}[t]{0.92\textwidth}   
        \centering
        \caption*{\hspace{1.8em}Energy conservation}
        \includegraphics[width=\textwidth]{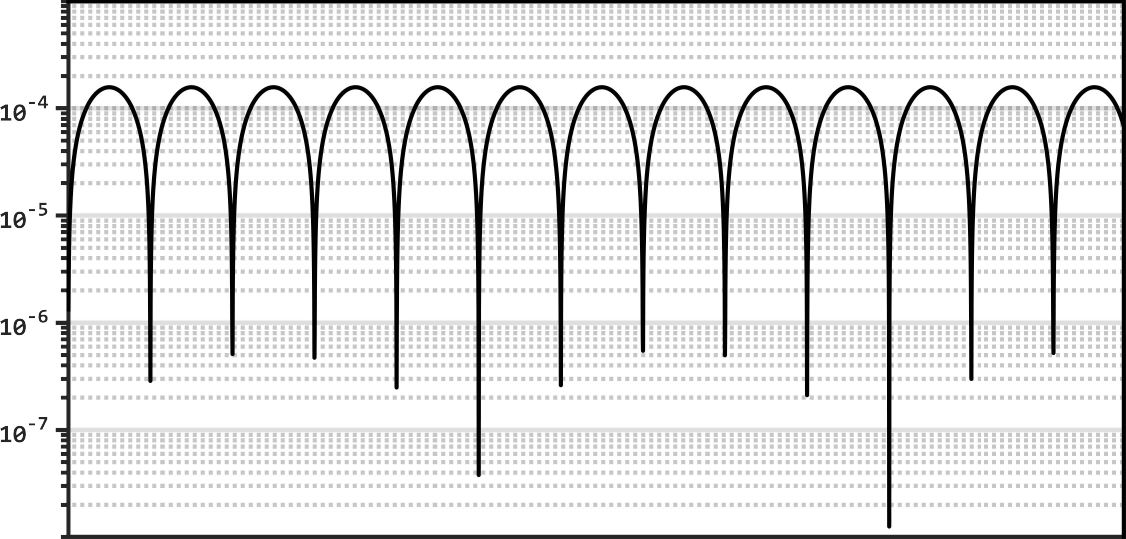}
    \end{minipage}
    \end{subfigure}
    \hfill
    \begin{subfigure}[t]{0.2999\textwidth}
        \caption*{Contour}
        \centering
        \includegraphics[width=\textwidth]{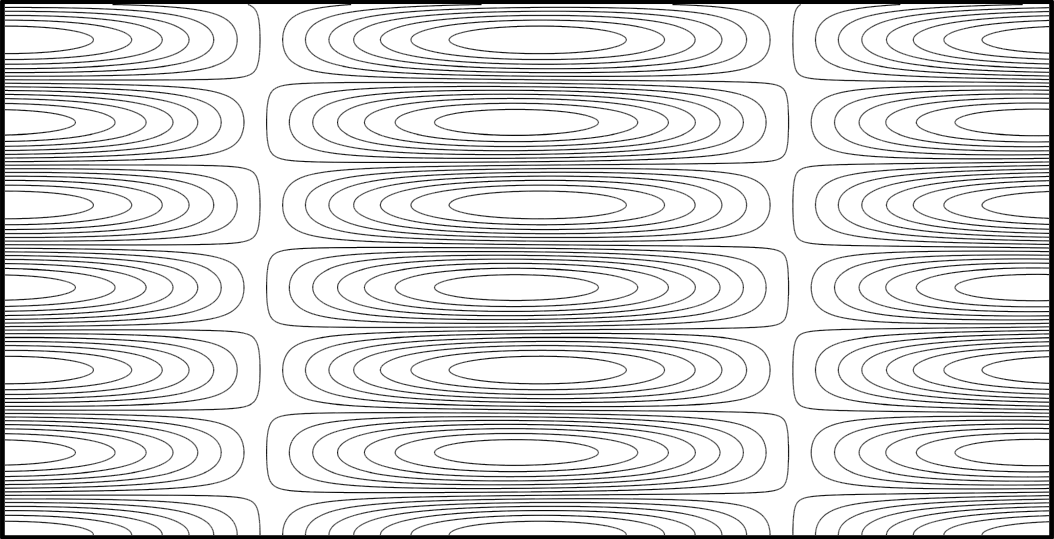}
    \end{subfigure}
    \hfill
    \begin{subfigure}[t]{0.2999\textwidth}
        \caption*{Wave value}
        \centering
        \includegraphics[width=\textwidth]{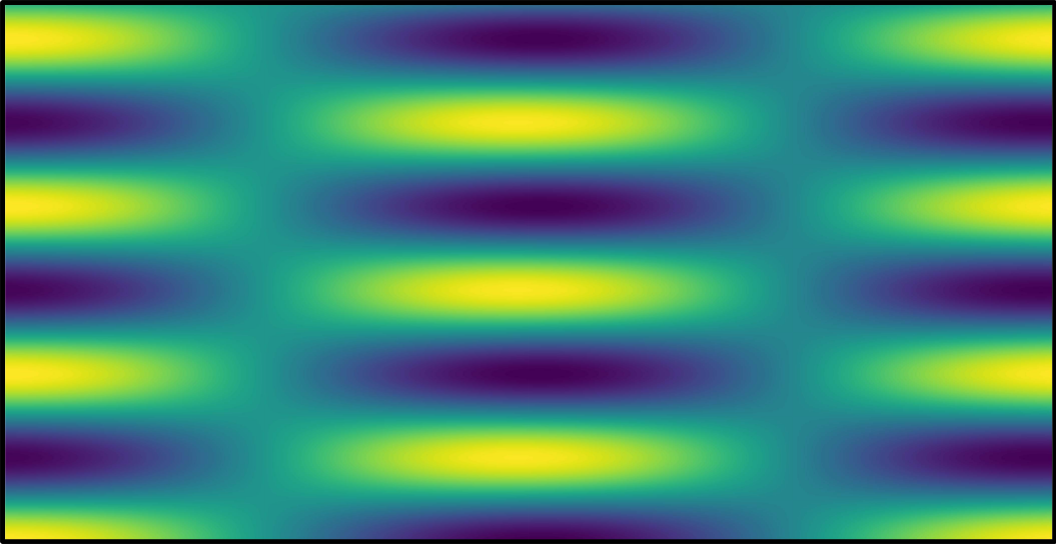}
    \end{subfigure}

    \vspace{0.8em}
    \begin{subfigure}[t]{0.348\textwidth}
    \centering
    \begin{minipage}[t]{0.05\textwidth}  
        \vspace{-2.3cm}
        \rotatebox{90}{$\boldsymbol{\Theta} = 5\times10^{-4}$}
    \end{minipage}
    \begin{minipage}[t]{0.92\textwidth}   
        \centering
        \includegraphics[width=\textwidth]{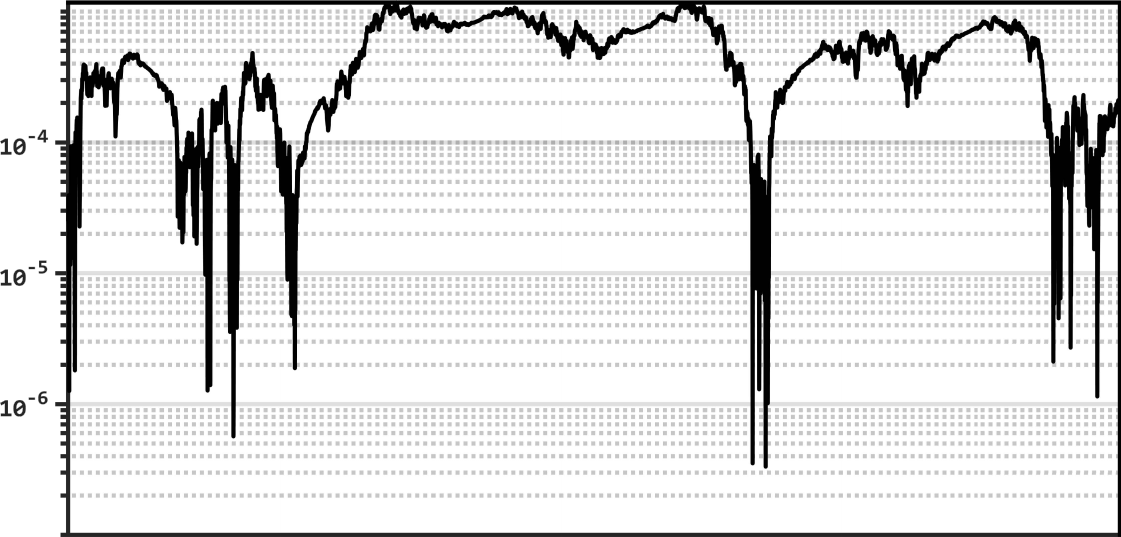}
    \end{minipage}
    \end{subfigure}
    \hfill
    \begin{subfigure}[t]{0.2999\textwidth}
        \centering
        \includegraphics[width=\textwidth]{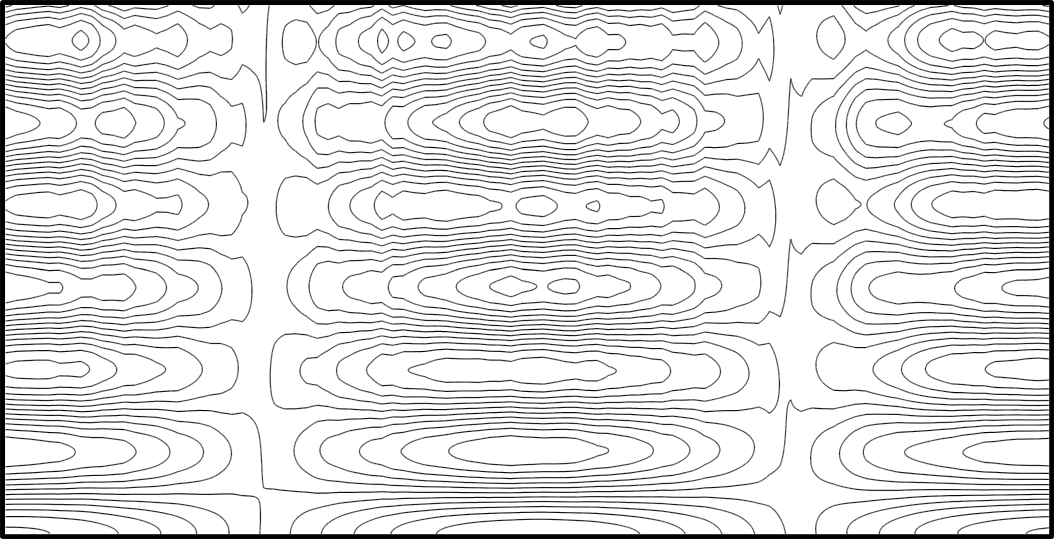}
    \end{subfigure}
    \hfill
    \begin{subfigure}[t]{0.2999\textwidth}
        \centering
        \includegraphics[width=\textwidth]{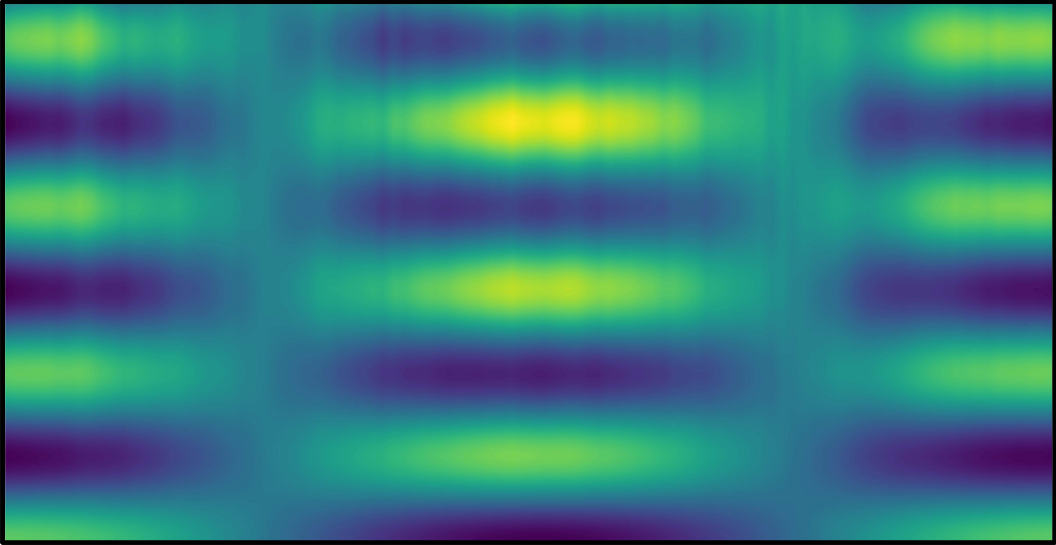}
    \end{subfigure}

    \vspace{0.8em}
    \begin{subfigure}[t]{0.348\textwidth}
    \centering
    \begin{minipage}[t]{0.05\textwidth}  
        \vspace{-1.95cm}
        \rotatebox{90}{$\boldsymbol{\Theta} = 10^{-3}$}
    \end{minipage}
    \begin{minipage}[t]{0.92\textwidth}   
        \centering
        \includegraphics[width=\textwidth]{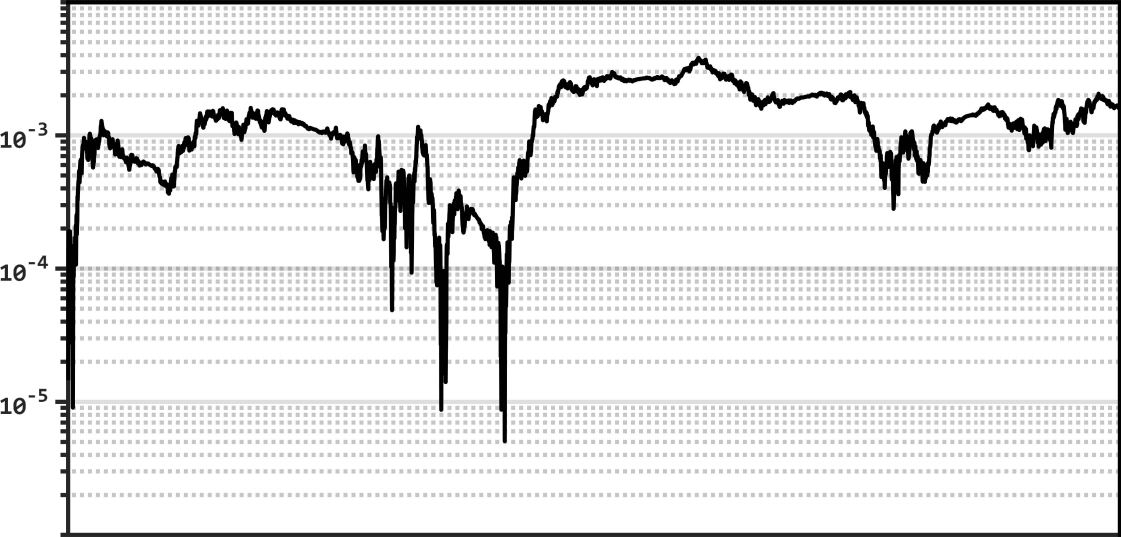}
    \end{minipage}
    \end{subfigure}
    \hfill
    \begin{subfigure}[t]{0.2999\textwidth}
        \centering
        \includegraphics[width=\textwidth]{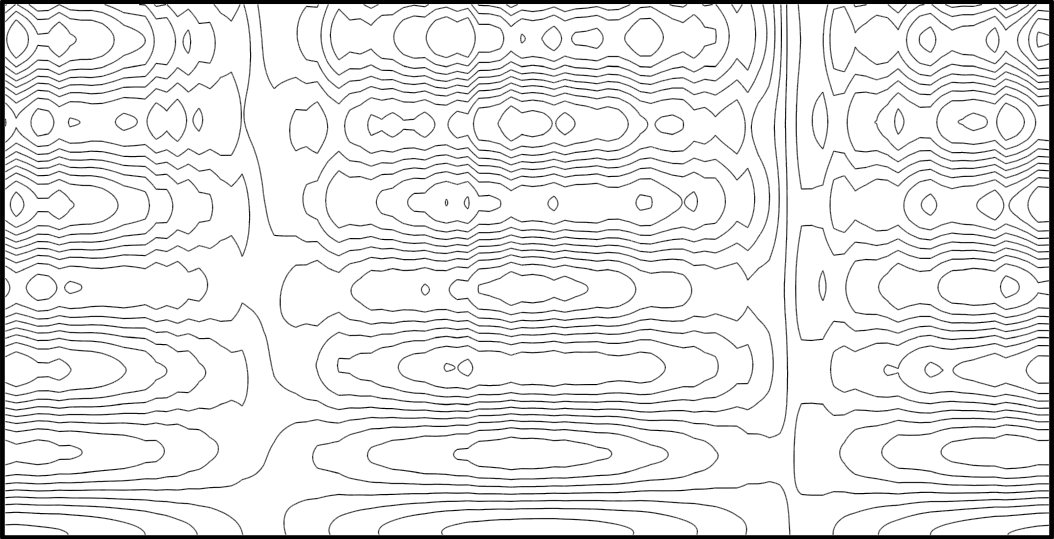}
    \end{subfigure}
    \hfill
    \begin{subfigure}[t]{0.2999\textwidth}
        \centering
        \includegraphics[width=\textwidth]{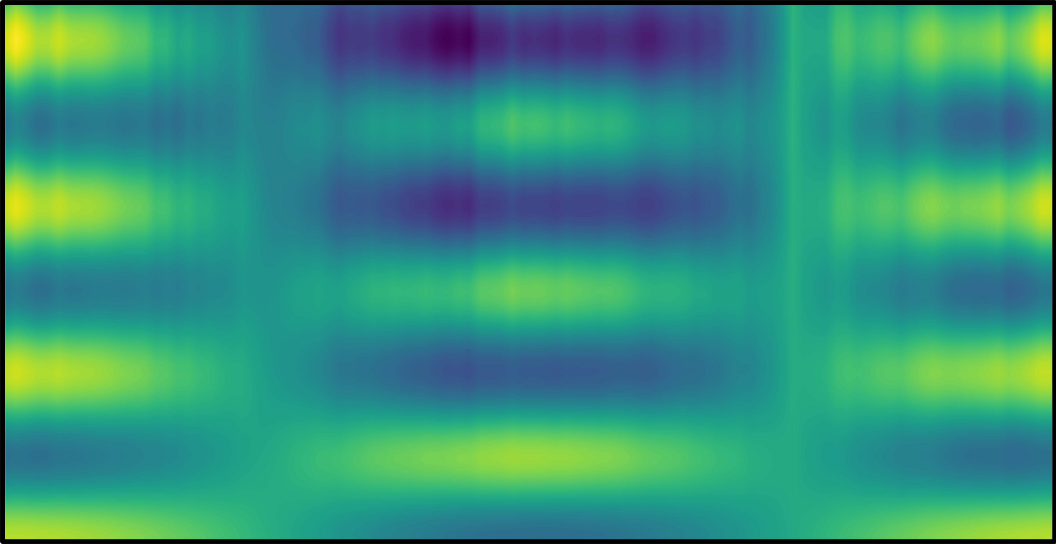}
    \end{subfigure}

    \vspace{0.8em}
    \begin{subfigure}[t]{0.348\textwidth}
    \centering
    \begin{minipage}[t]{0.05\textwidth}  
        \vspace{-2.3cm}
        \rotatebox{90}{$\boldsymbol{\Theta} = 5\times10^{-3}$}
    \end{minipage}
    \begin{minipage}[t]{0.92\textwidth}   
        \centering
        \includegraphics[width=\textwidth]{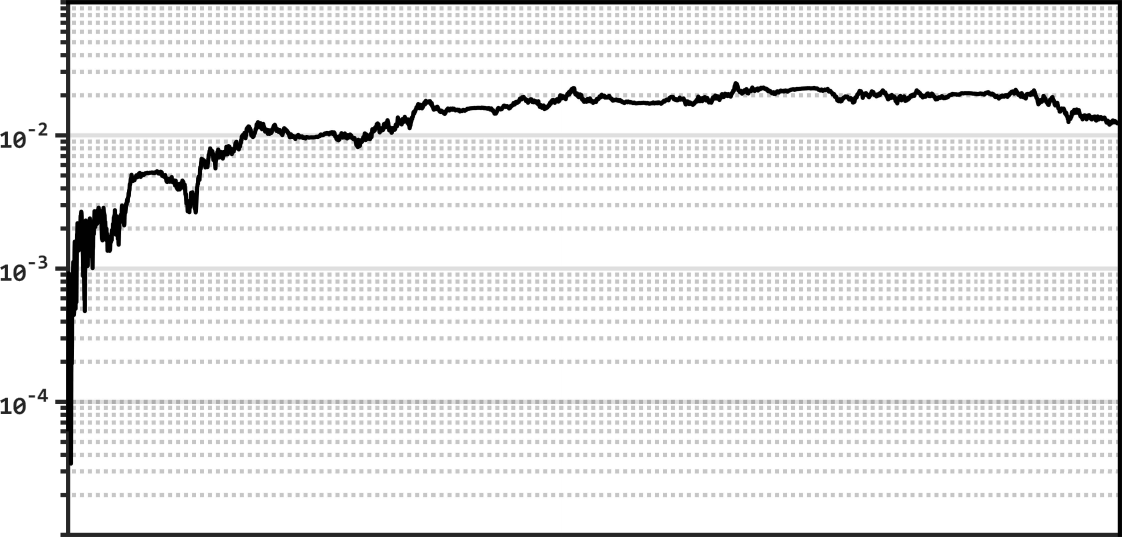}
    \end{minipage}
    \end{subfigure}
    \hfill
    \begin{subfigure}[t]{0.2999\textwidth}
        \centering
        \includegraphics[width=\textwidth]{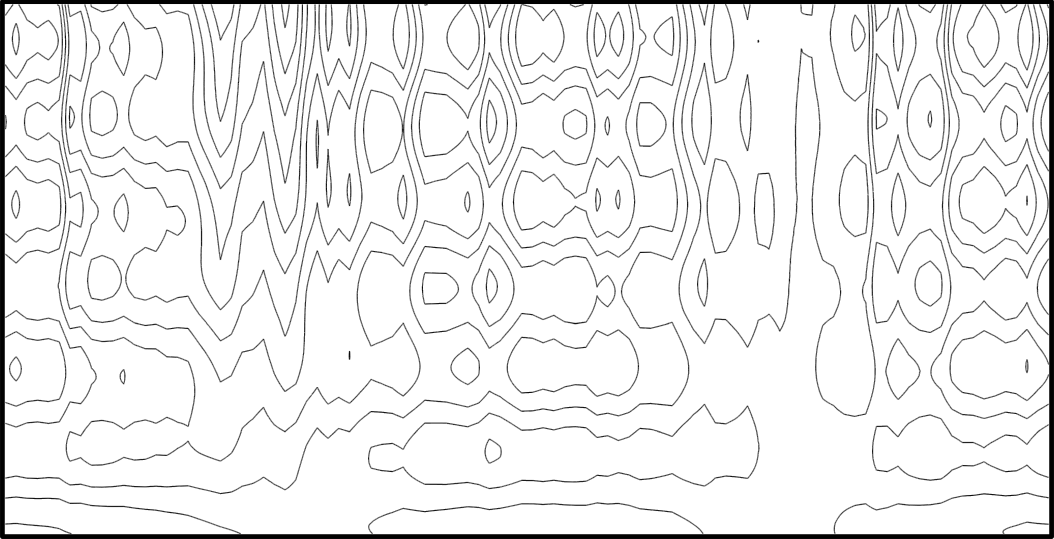}
    \end{subfigure}
    \hfill
    \begin{subfigure}[t]{0.2999\textwidth}
        \centering
        \includegraphics[width=\textwidth]{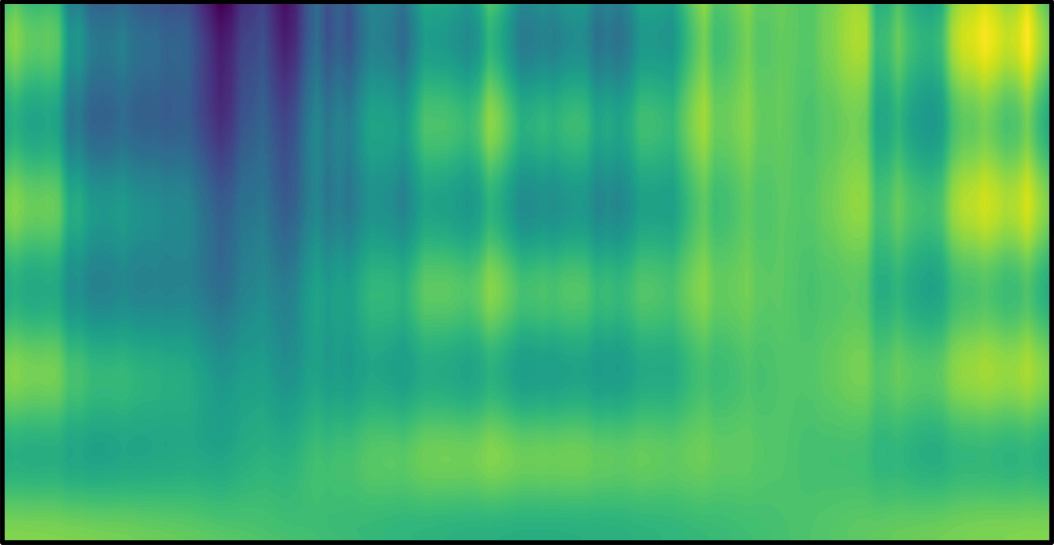}
    \end{subfigure}

    \vspace{0.8em}
    \begin{subfigure}[t]{0.348\textwidth}
    \centering
    \begin{minipage}[t]{0.05\textwidth}  
        \vspace{-1.95cm}
        \rotatebox{90}{$\boldsymbol{\Theta} = 10^{-2}$}
    \end{minipage}
    \begin{minipage}[t]{0.92\textwidth}   
        \centering
        \includegraphics[width=\textwidth]{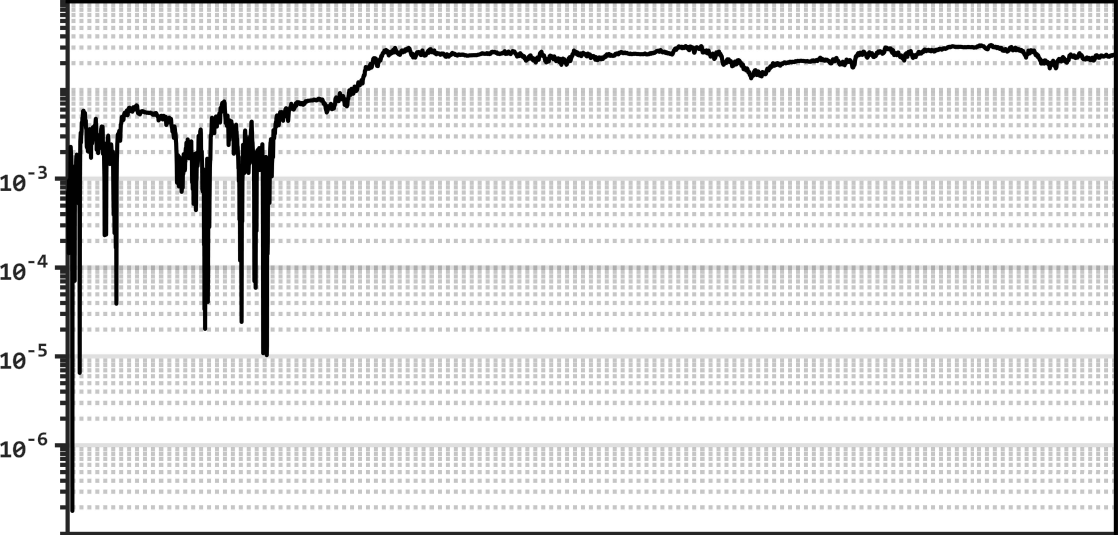}
    \end{minipage}
    \end{subfigure}
    \hfill
    \begin{subfigure}[t]{0.2999\textwidth}
        \centering
        \includegraphics[width=\textwidth]{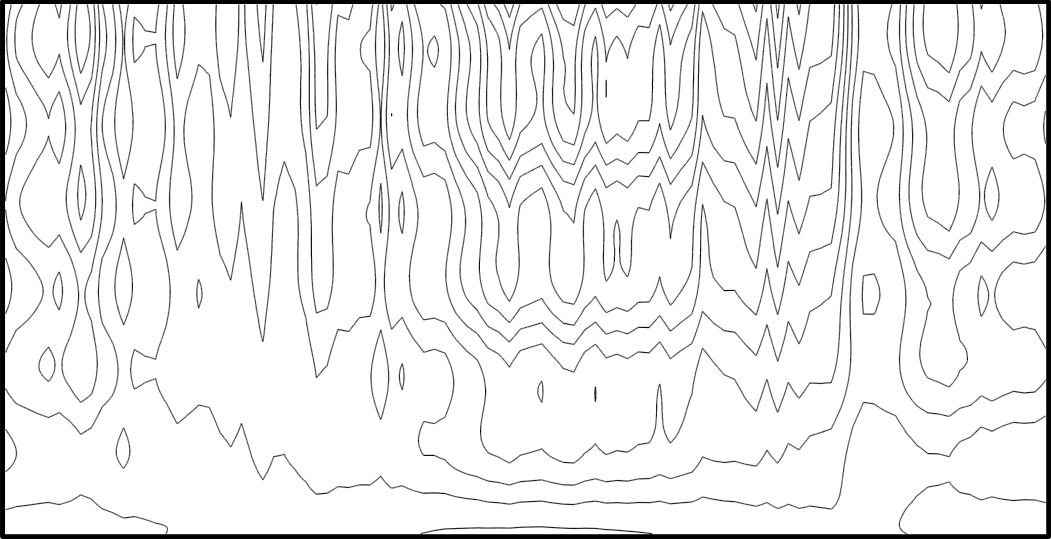}
    \end{subfigure}
    \hfill
    \begin{subfigure}[t]{0.2999\textwidth}
        \centering
        \includegraphics[width=\textwidth]{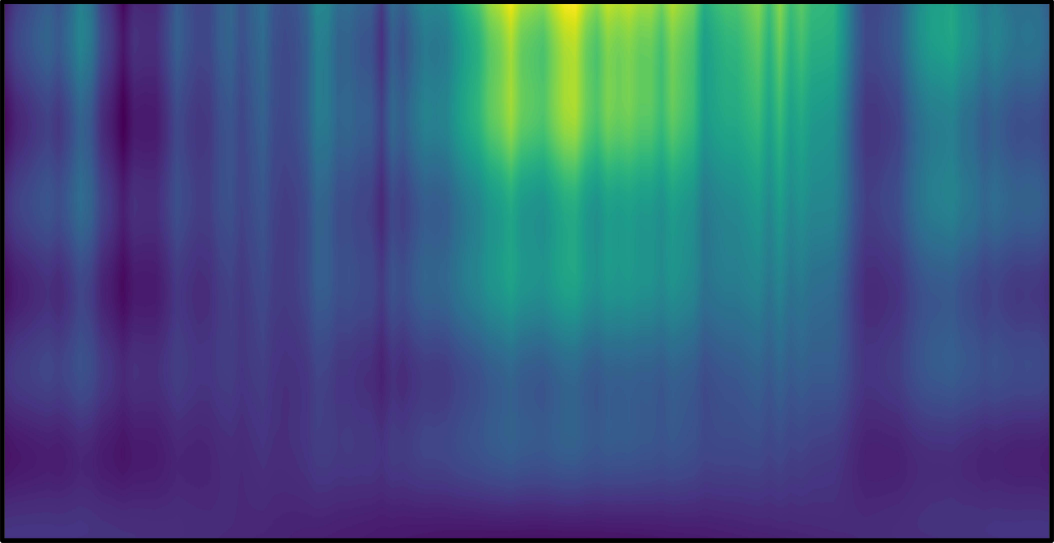}
    \end{subfigure}
    \caption{\textbf{Left}: Energy error performance. \textbf{Middle}: Contours of the wave equation under different perturbations. \textbf{Right}: Wave value at every spacetime node.}
    \label{Wave_Plot}
\end{figure*}
The resulting numerical evolution of the system is shown by contour maps and 2D-view plots in Fig.~\ref{Wave_Plot}. Furthermore, it is observed that the discrete total energy is nearly conserved by the deterministic integrators (the first row of Fig.~\ref{Wave_Plot}). The energy fluctuation depends on the size of time step. With time step $\Delta t = 10^{-4}$, $10^{-5}$, and $10^{-6}$, the energy fluctuation is of order $\mathcal{O}(10^{-3})$, $\mathcal{O}(10^{-4})$, and $\mathcal{O}(10^{-5})$, respectively, within the time horizon $T=20$ s, thereby following the typical energy behavior of symplectic numerical integrators.

In the stochastic setting, due to the existence of stochastic potential/diffusivity vector field, the total energy is in general not conserved by a stochastic system. But for systems that admits the energy conservation (system with a stochastic potential that Poisson-commutes with the Hamiltonian as discussed in Corollary~\ref{Coro_Energy}), the proposed integrators nearly preserves the energy for exponentially long time. Here in the example of stochastic wave equations, the stochastic potential \eqref{sto_potential_wave} Poisson-commutes with the reduced Hamiltonian, i.e., $\{h,\Phi\}$ = 0, which can be easily checked. Consequently, the discrete total energy 
$
E_{d} = 
\sum_{n=0}^{N-1}\sum_{j=0}^{J-1}
\frac{\Delta t\Delta s}{2}
\left[
\left(\zeta^{n+1/2}_{j}\right)^{2}
+
\left(\gamma^{n}_{j+1/2}\right)^{2}
\right]$
is numerically nearly preserved by the proposed integrators (the last four rows of Fig.~\ref{Wave_Plot}).

\subsection{Stochastic bending beam}\label{bending_beam}
\begin{figure*}[pos=htpb]
\centering
    \begin{subfigure}[pos=t]{0.29\textwidth}
    \centering
    \begin{minipage}[pos=t]{0.1\textwidth}  
        \vspace{1.5cm}
        \rotatebox{90}{Iter. $n=4500$}
    \end{minipage}
    \begin{minipage}[pos=t]{0.85\textwidth}   
        \centering
        \caption*{\hspace{1.8em}\textbf{Deterministic}}
        \vspace{-0.3em}
        \includegraphics[width=0.95\textwidth]{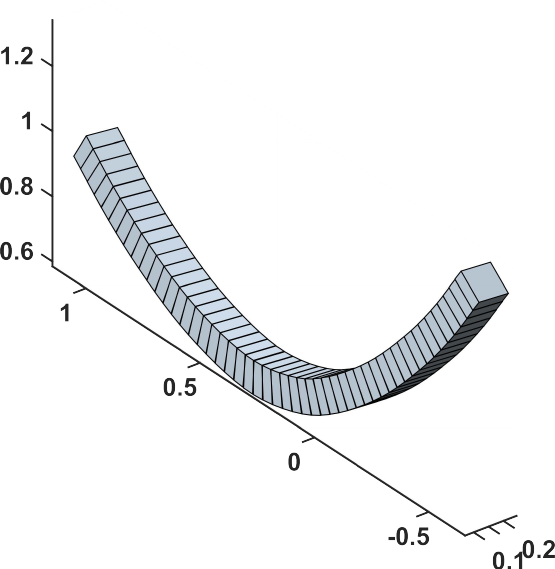}
    \end{minipage}
    \end{subfigure}
    \hspace{2em}
    \begin{subfigure}[pos=t]{0.2465\textwidth}
        \caption*{\textbf{Stochastic}: $\Theta = 5\times 10^{-3}\boldsymbol{1}_{6}$}
        \vspace{-0.3em}
        \centering
        \includegraphics[width=0.95\textwidth]{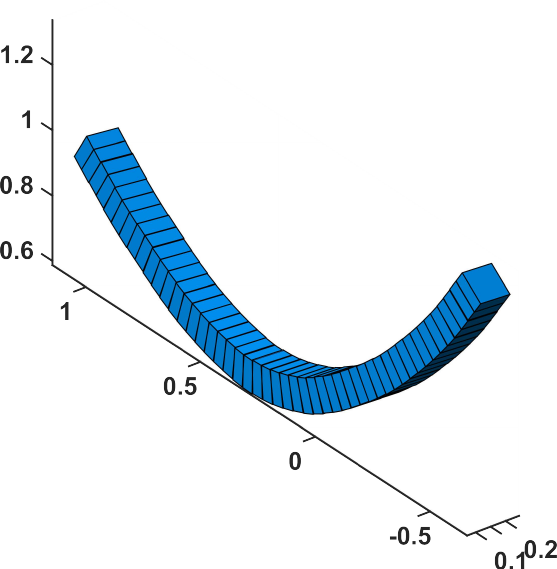}
    \end{subfigure}
    \hspace{2em}
    \begin{subfigure}[pos=t]{0.2465\textwidth}
        \caption*{\textbf{Stochastic}: $\Theta = 5\times 10^{-2}\boldsymbol{1}_{6}$}
        \vspace{-0.3em}
        \centering
        \includegraphics[width=0.95\textwidth]{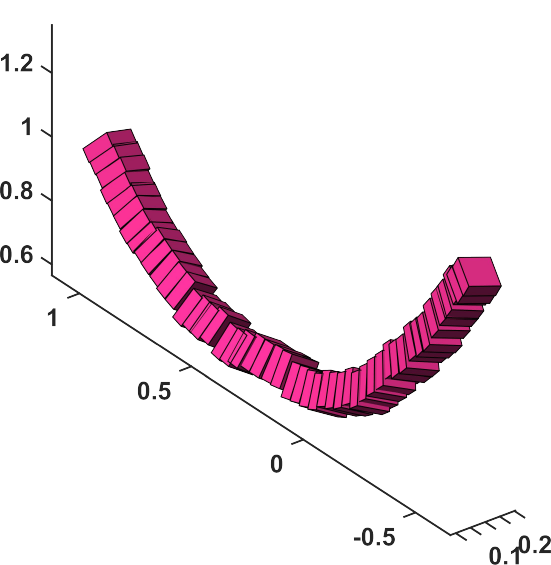}
    \end{subfigure}

    \begin{subfigure}[pos=t]{0.29\textwidth}
    \centering
    \begin{minipage}[pos=t]{0.1\textwidth}  
        \vspace{-3cm}
        \rotatebox{90}{$n=9000$}
    \end{minipage}
    \begin{minipage}[pos=t]{0.85\textwidth}   
        \centering
        \includegraphics[width=0.95\textwidth]{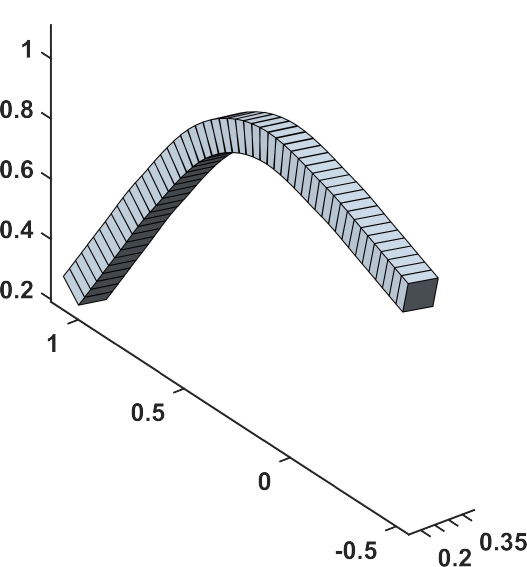}
    \end{minipage}
    \end{subfigure}
    \hspace{2em}
    \begin{subfigure}[pos=t]{0.2465\textwidth}
        \centering
        \includegraphics[width=0.95\textwidth]{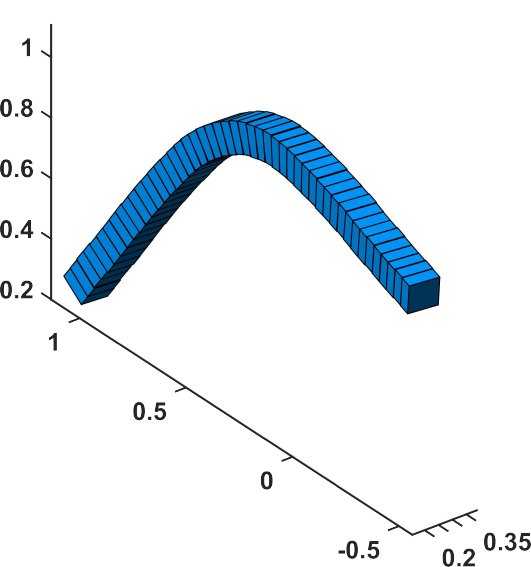}
    \end{subfigure}
    \hspace{2em}
    \begin{subfigure}[pos=t]{0.2465\textwidth}
        \centering
        \includegraphics[width=0.95\textwidth]{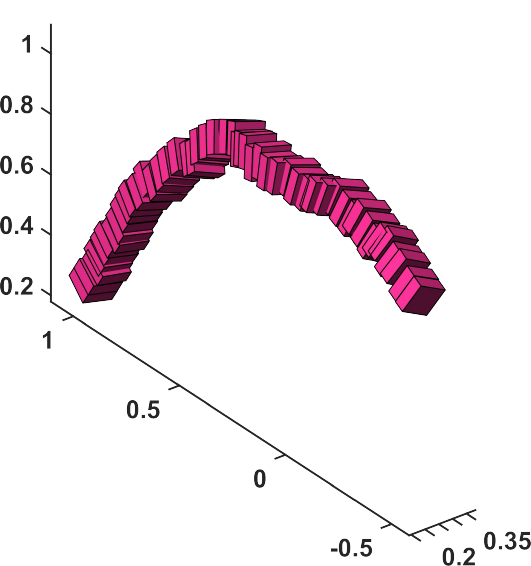}
    \end{subfigure}

    \begin{subfigure}[pos=t]{0.29\textwidth}
    \centering
    \begin{minipage}[pos=t]{0.1\textwidth}  
        \vspace{-3.5cm}
        \rotatebox{90}{$n=13500$}
    \end{minipage}
    \begin{minipage}[pos=t]{0.85\textwidth}   
        \centering
        \includegraphics[width=0.95\textwidth]{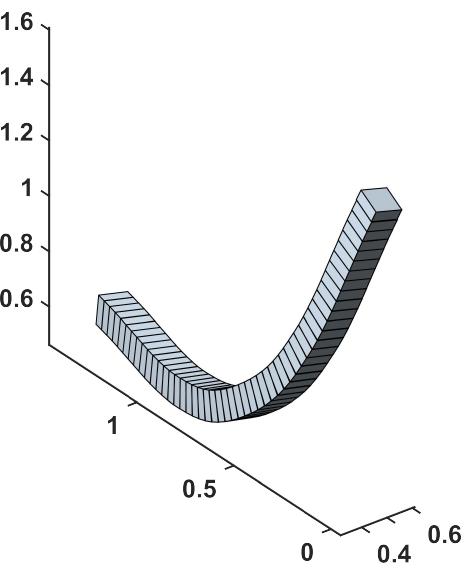}
    \end{minipage}
    \end{subfigure}
    \hspace{2em}
    \begin{subfigure}[pos=t]{0.2465\textwidth}
        \centering
        \includegraphics[width=0.95\textwidth]{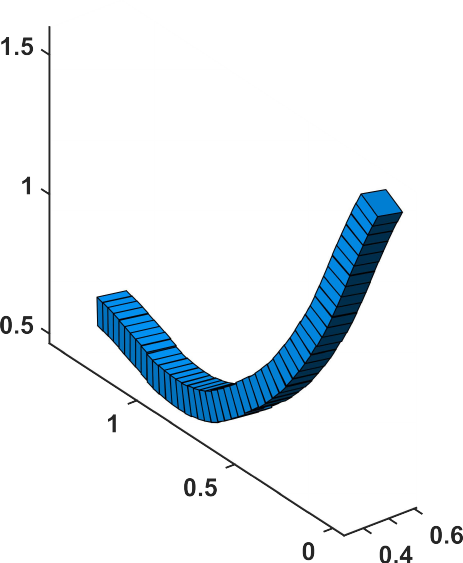}
    \end{subfigure}
    \hspace{2em}
    \begin{subfigure}[pos=t]{0.2465\textwidth}
        \centering
        \includegraphics[width=0.95\textwidth]{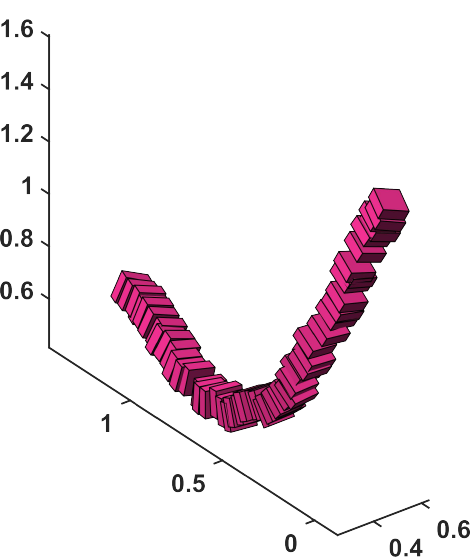}
    \end{subfigure}

    \begin{subfigure}[pos=t]{0.29\textwidth}
    \centering
    \begin{minipage}[pos=t]{0.1\textwidth}  
        \vspace{-4.5cm}
        \rotatebox{90}{$n=18000$}
    \end{minipage}
    \begin{minipage}[pos=t]{0.85\textwidth}   
        \centering
        \includegraphics[width=0.95\textwidth]{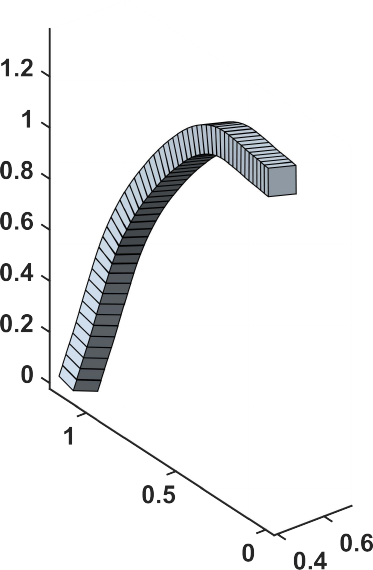}
    \end{minipage}
    \end{subfigure}
    \hspace{2em}
    \begin{subfigure}[pos=t]{0.2465\textwidth}
        \centering
        \includegraphics[width=0.95\textwidth]{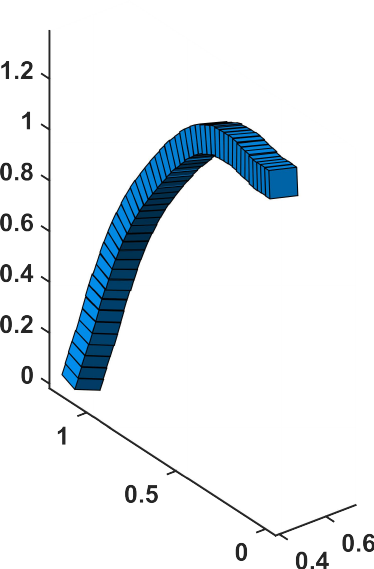}
    \end{subfigure}
    \hspace{2em}
    \begin{subfigure}[pos=t]{0.2465\textwidth}
        \centering
        \includegraphics[width=0.95\textwidth]{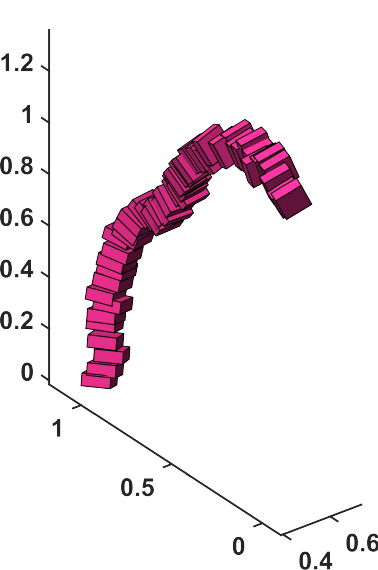}
    \end{subfigure}
    \caption{Snapshots of the beam at different iteration steps. \textbf{Left}: deterministic beam simulated by the numerical method in \cite{An2020}. \textbf{Middle}: stochastic beam simulated by the proposed stochastic variational integrators with $\Theta = 5\times 10^{-3}\boldsymbol{1}_{6\times 1}$ or $\Phi\left(g,\mu\right) = \int_{0}^{S}\left<\mu,5\times 10^{-3}\mathbf{1}_{6\times 1}\right> ds$. \textbf{Right}: stochastic beam simulated by the proposed stochastic variational integrators with $\Theta = 5\times 10^{-2}\boldsymbol{1}_{6\times 1}$ or $\Phi\left(g,\mu\right) = \int_{0}^{S}\left<\mu, 5\times 10^{-2}\mathbf{1}_{6\times 1}\right> ds$.}
    \label{Beam_Snapshots}
\end{figure*}
For the numerical simulation of a stochastic bending beam, the following parameters are chosen (see Table~\ref{beam_para}). These beam parameters have been introduced by \cite{Simo1985} and used in \cite{Simo1986}, \cite{Demoures2015}, and \cite{An2020}. 

\begin{table}[width=.685\linewidth,cols=5,pos=h]
\caption{Parameters of the beam.}
\label{beam_para}
\begin{tabular*}{\tblwidth}{@{} ccccc@{} }
\toprule
Mass density  & Section edge length & Poisson ratio   &  Length      & Young's modulus      \\
$\rho$        & $a$                 & $\nu$                 & $S$        & $E$           \\
\midrule
1000 kg/m     & 0.1 m               & 0.35       & 2$\pi$/3 m    & $10^{7}$ Pa     \\
\bottomrule
\end{tabular*}
\end{table}
The spatial discretization number of the beam is $J=50$, and therefore the spatial discretization step is $\Delta s = S/J = \frac{2\pi}{3J}$ $m$. The temporal discretization step is $\Delta t = 5\times 10^{-5}$ $s$. The total iteration number is $N=20000$. The beam is initiated with the positions of cross sections 
\begin{equation*}
b^{0}_{j}
=
\left(
0,
\cos\left(\frac{\pi}{6}+j\Delta s\right),
\sin\left(\frac{\pi}{6}+j\Delta s\right)
\right),
\quad 
j=1,...,J,
\end{equation*}
the initial velocities
\begin{equation*}
\zeta^{1/2}_{j}
=
\left(
0,0,0,\sin\frac{j\Delta s\pi}{S},1,0
\right),
\quad
j=1,...,J,
\end{equation*}
and the boundary conditions $\gamma^{n}_{j+1/2} = 0$, $j=1,...,J$.
\begin{figure*}[pos=htpb]
    \begin{subfigure}[t]{0.36\textwidth}
    \centering
    \begin{minipage}[t]{0.1\textwidth}  
        \vspace{1cm}
        \rotatebox{90}{Energy performance}
    \end{minipage}
    \begin{minipage}[t]{0.85\textwidth}   
        \centering
        \caption*{\hspace{1.8em}\textbf{Deterministic}}
        \includegraphics[width=\textwidth]{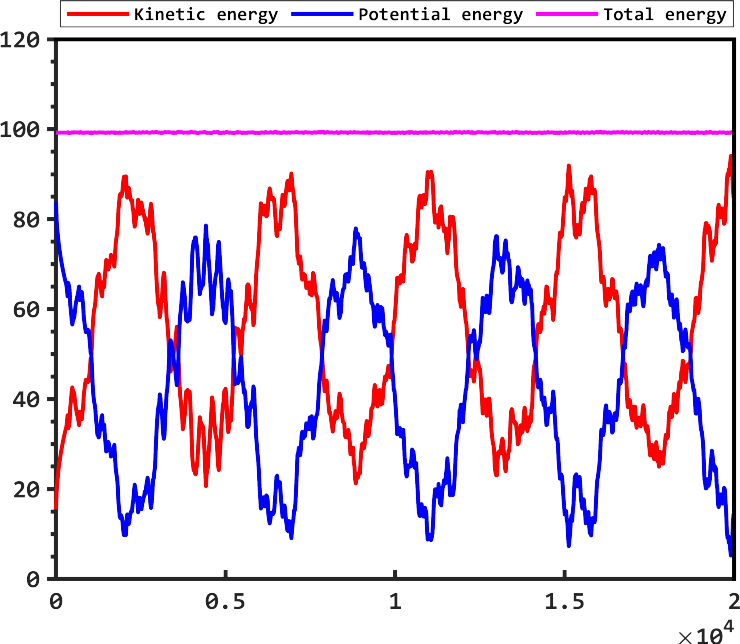}
    \end{minipage}
    \end{subfigure}
    \hfill
    \begin{subfigure}[t]{0.306\textwidth}
        \caption*{\textbf{Stochastic}: $\Theta = 5\times 10^{-3}\cdot\boldsymbol{1}_{6\times 1}$}
        \centering
        \includegraphics[width=\textwidth]{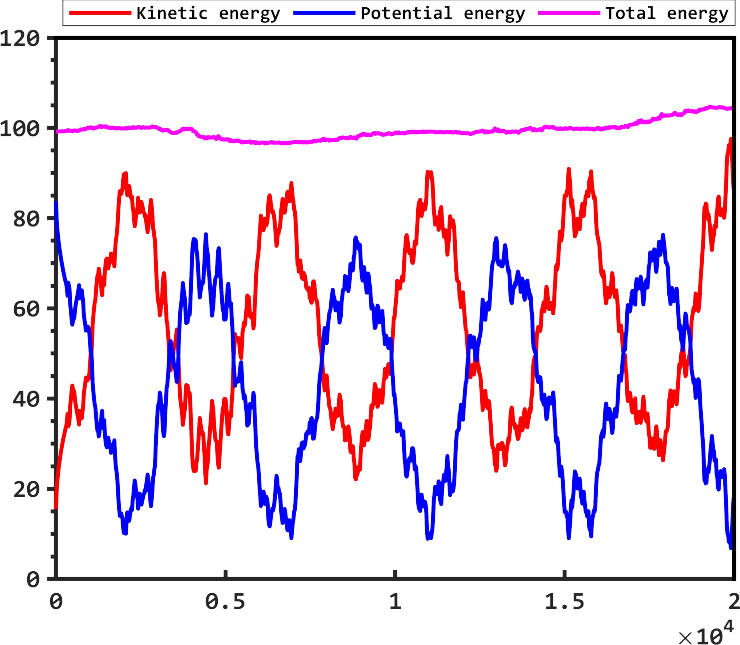}
    \end{subfigure}
    \hfill
    \begin{subfigure}[t]{0.306\textwidth}
        \caption*{\textbf{Stochastic}: $\Theta = 5\times 10^{-2}\cdot\boldsymbol{1}_{6\times 1}$}
        \centering
        \includegraphics[width=\textwidth]{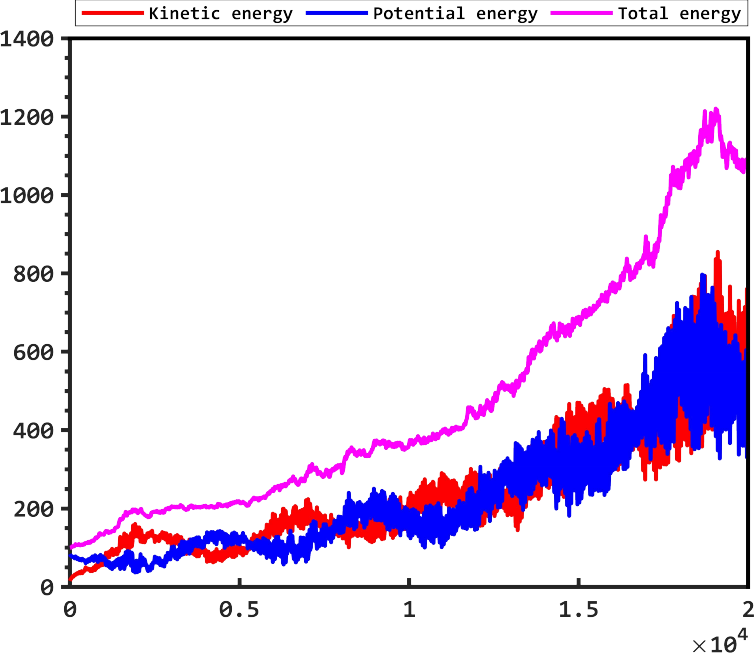}
    \end{subfigure}

    \begin{subfigure}[t]{0.36\textwidth}
    \centering
    \begin{minipage}[t]{0.1\textwidth}  
        \vspace{-3.85cm}
        \rotatebox{90}{Angular momentum}
    \end{minipage}
    \begin{minipage}[t]{0.85\textwidth}   
        \centering
        \includegraphics[width=\textwidth]{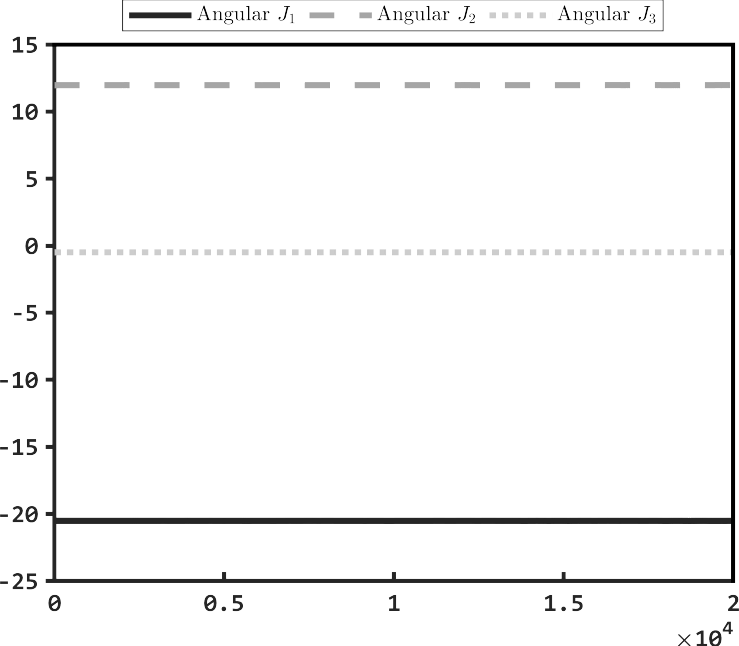}
    \end{minipage}
    \end{subfigure}
    \hfill
    \begin{subfigure}[t]{0.306\textwidth}
        \centering
        \includegraphics[width=\textwidth]{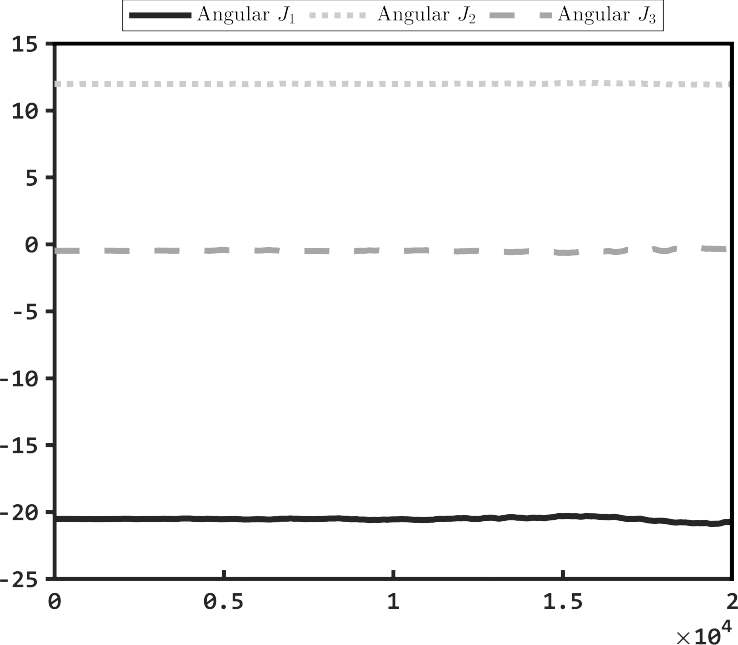}
    \end{subfigure}
    \hfill
    \begin{subfigure}[t]{0.306\textwidth}
        \centering
        \includegraphics[width=\textwidth]{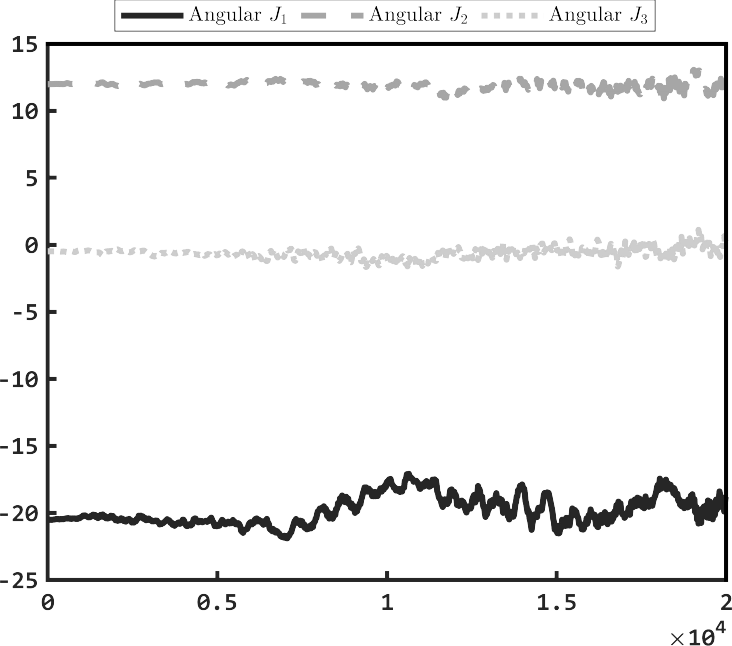}
    \end{subfigure}

    \begin{subfigure}[t]{0.36\textwidth}
    \centering
    \begin{minipage}[t]{0.1\textwidth}  
        \vspace{-3.8cm}
        \rotatebox{90}{Linear momentum}
    \end{minipage}
    \begin{minipage}[t]{0.85\textwidth}   
        \centering
        \includegraphics[width=\textwidth]{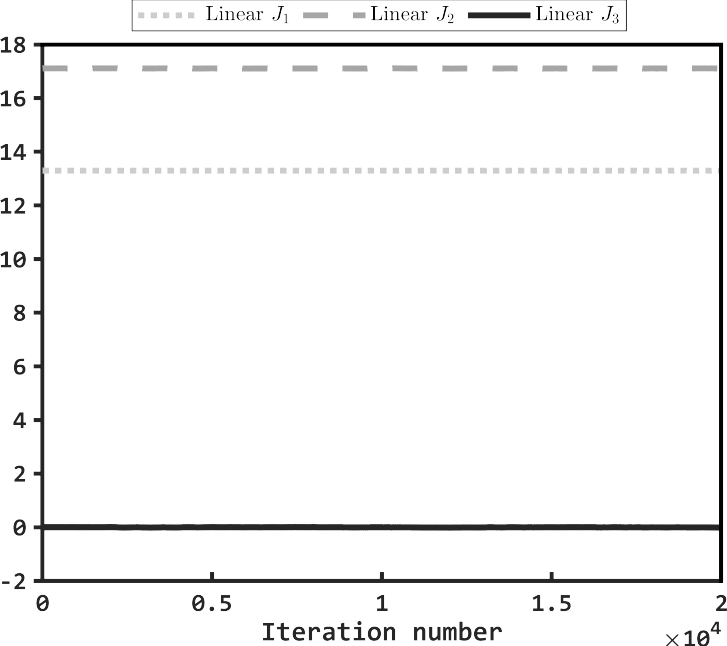}
    \end{minipage}
    \end{subfigure}
    \hfill
    \begin{subfigure}[t]{0.306\textwidth}
        \centering
        \includegraphics[width=\textwidth]{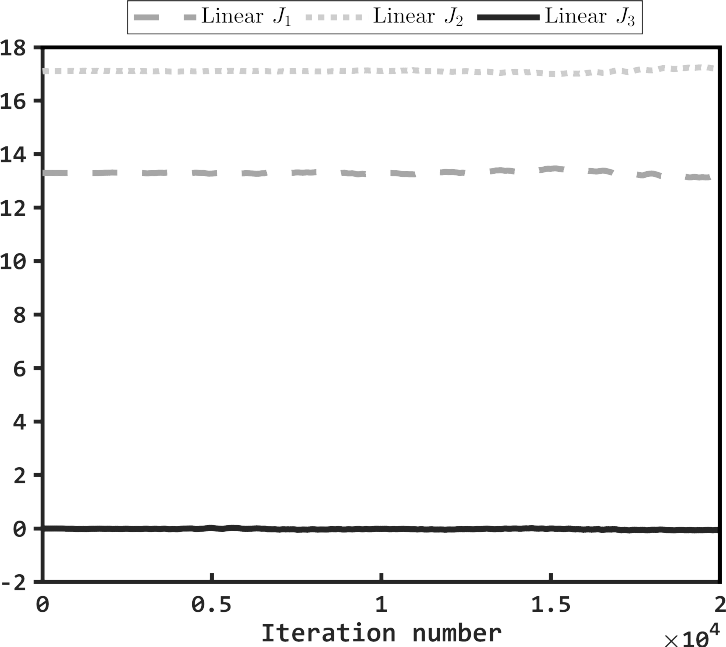}
    \end{subfigure}
    \hfill
    \begin{subfigure}[t]{0.306\textwidth}
        \centering
        \includegraphics[width=\textwidth]{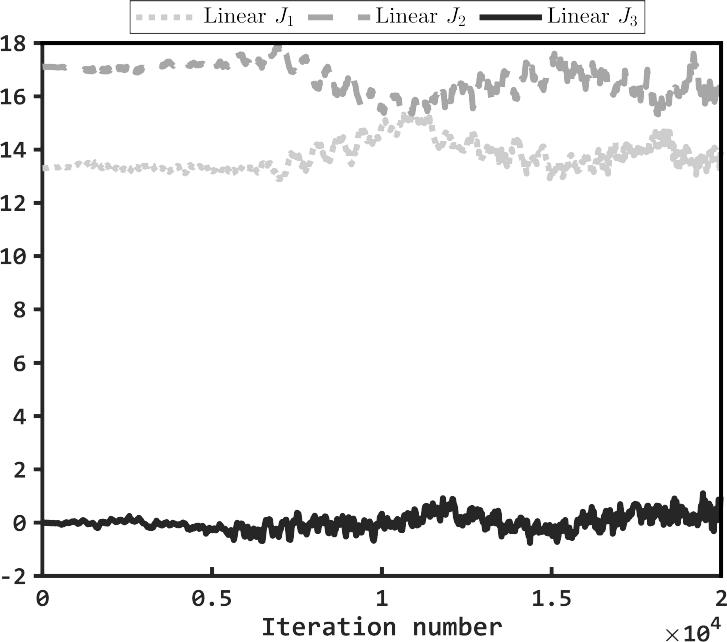}
    \end{subfigure}
    \caption{\textbf{Top}: Energy: the total, kinetic, and potential energy. \textbf{Middle}: Angular momentum. \textbf{Bottom}: Linear momentum.}
    \label{Ene_Mom}
\end{figure*}
The stochastic perturbations are described by the diffusivity vector fields 
\begin{align*}
\Theta = 0.005\boldsymbol{1}_{6\times 1}\in\mathfrak{se}(3)
\quad
\text{and}
\quad
\Theta = 0.05\boldsymbol{1}_{6\times 1}\in\mathfrak{se}(3)
\end{align*}
for the Type-I stochastic integrator introduced in \S~\ref{d_ham}, and the stochastic potentials 
\begin{align}
\Phi\left(g,\mu\right) = \int_{0}^{S}\left<\mu, 0.005\mathbf{1}_{6\times 1}\right> ds
\quad
\text{and}
\quad
\Phi\left(g,\mu\right) = \int_{0}^{S}\left<\mu, 0.05\mathbf{1}_{6\times 1}\right> ds
\label{potentials_bending_beam}
\end{align}
for the Type-II stochastic integrator introduced in \S~\ref{d_hph}.

The simulated trajectories of the stochastic bending beam are shown in Fig.~\ref{Beam_Snapshots}. When we choose $\Theta = 0$ and $\Phi = 0$, the proposed integrators specialize to the deterministic ones. In this case, it is seen from the first column of Fig.~\ref{Ene_Mom} that the discrete total energy
\begin{equation*}
E\left(\zeta^{n+1/2}_{j}, \gamma^{n}_{j+1/2}\right) =
\sum_{j=1}^{J}\frac{\Delta s}{2}\left(\left<\zeta^{n+1/2}_{j},\mathcal{K}\zeta^{n+1/2}_{j}\right>
+ \left<\gamma^{n}_{j+1/2},\mathcal{P}\gamma^{n}_{j+1/2}\right>\right)
\end{equation*}
is nearly preserved by the proposed variational discretizations. The proposed integrators in \S~\ref{Exact_VI_1} and \S~\ref{d_hph} belong to the class of \textit{approximate variational integrators}, which not only approximate the action integral by applying a discrete variational principle as \textit{variational integrators} do, but also additionally approximate variations. Hence, not being fully variational, the approximate variational integrators in general do not inherit the exact preservation in momentum
\begin{equation*}
M\left(g^{n}_{j},\zeta^{n+1/2}_{j}\right) =
\sum_{j=1}^{J}\Delta s\mathrm{Ad}^{*}_{\left(g^{n}_{j}\right)^{-1}}\mathcal{K}\zeta^{n+1/2}_{j}.
\end{equation*}
However, it is a pleasant surprise that a good \textit{numerical} momentum preservation has been observed in simulations (the last two figures in the first column of Fig.~\ref{Ene_Mom}). In the stochastic setting, the momentum of the stochastic beam varies over time, which exactly reflects the fact that the system is influenced by the stochasticity (the last two rows of Fig.~\ref{Ene_Mom}). Furthermore, since the potentials \eqref{potentials_bending_beam} do not Poisson-commute with the Hamiltonian, the total energy is perturbed as shown in the first row of Fig.~\ref{Ene_Mom}.

\subsection{Stochastic flying spaghetti}
Here we give an interesting example of a flying stochastic geometrically exact beam, which is a stochastic extension of the well-known deterministic flying spaghetti \citep{JS_LV_1988}. The initial configuration of the spaghetti is given by the first plot in Fig.~\ref{initial_spaghetti}. The spaghetti is initiated by an impulse described by the generalized force $\left\{0,M_{y},M_{z},F_{x},0,0\right\}$, where $F_{x} = M_{z}/10$, $M_{y} = M_{z}/2$, and $M_{z}$ is given by the function shown in the second plot in Fig.~\ref{initial_spaghetti}.
\begin{figure*}[pos=htpb]
    \centering
    \begin{overpic}[width = 0.2\textwidth]{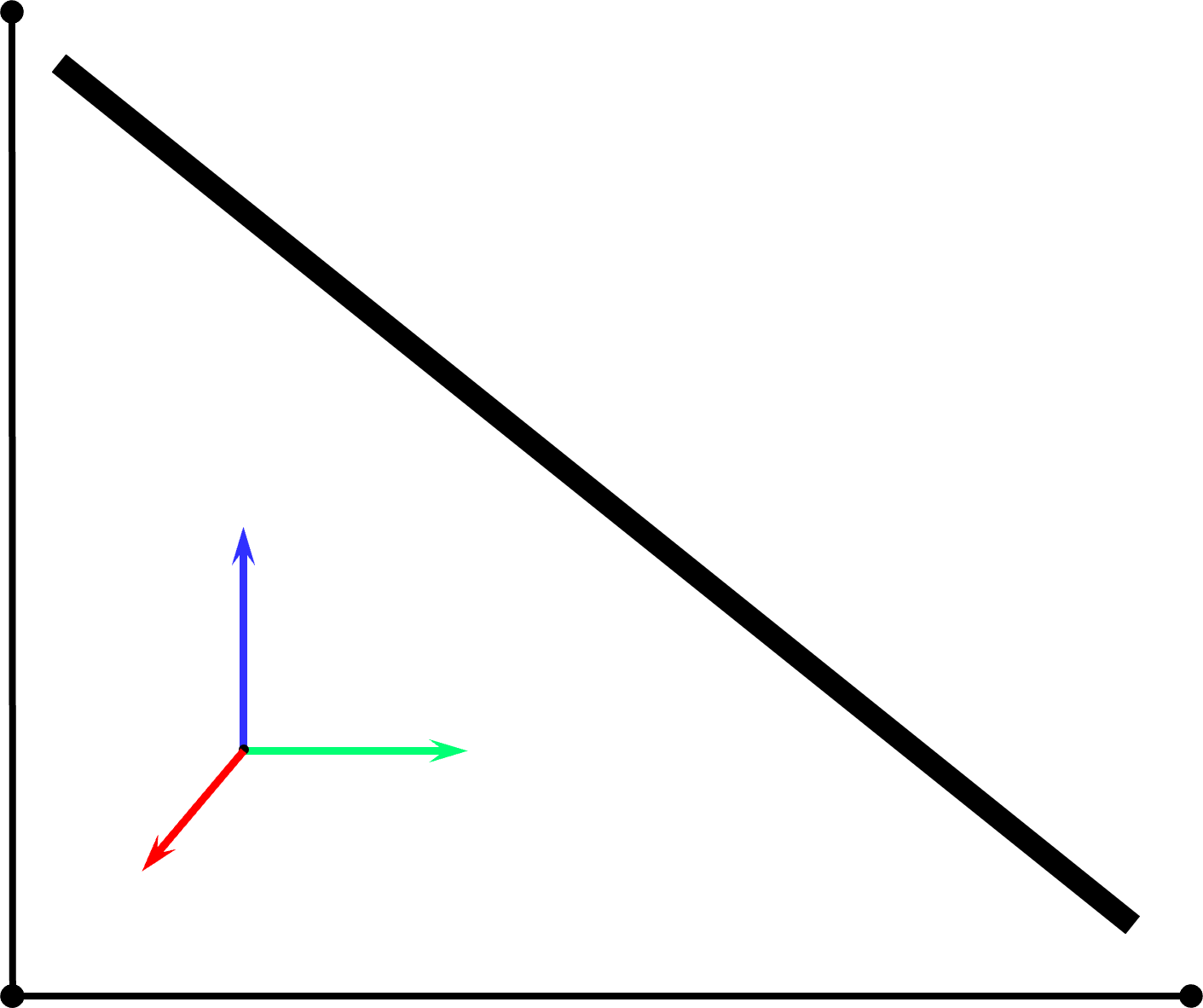}
        \put(8,6){$z$}
        \put(18,43){$y$}
        \put(39,19.5){$x$}
        \put(39,-8.5){$x=6$}
        \put(-25,33){$y=8$}
    \end{overpic}
    \hspace{5em}
    \begin{subfigure}{0.3\textwidth}
        \includegraphics[width=\textwidth]{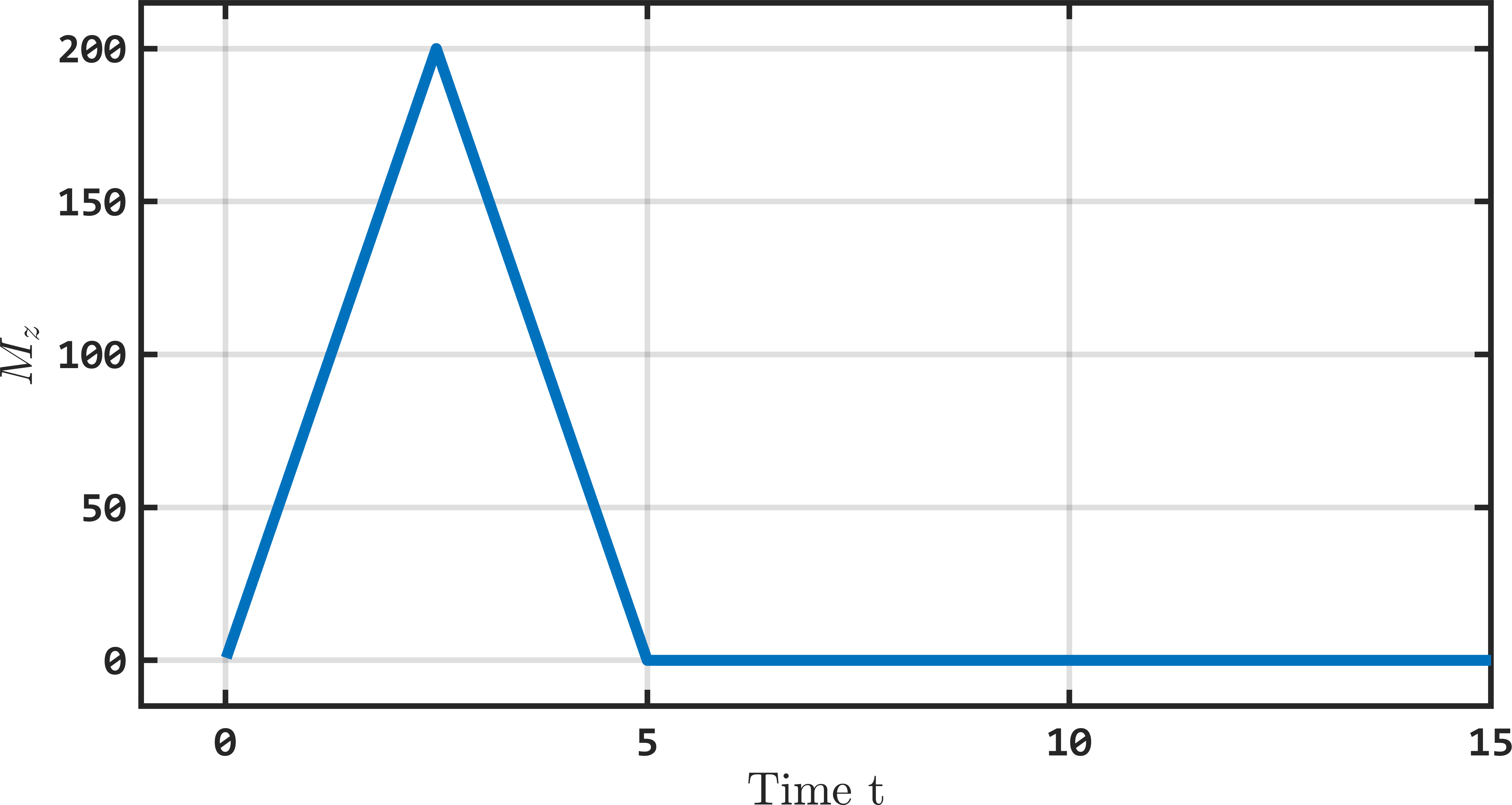}
    \end{subfigure}
    \vspace{0.5em}
    \caption{\textbf{Left}: Initial configuration of the spaghetti. \textbf{Right}: Function $M_z$.}
    \label{initial_spaghetti}
\end{figure*}

The stochasticity is described by the following diffusivity vector fields, respectively,
\begin{equation*}
\Theta = 10^{-2}\cdot\boldsymbol{1}_{6\times 1}\in\mathfrak{se}(3), 
\quad
3\times10^{-2}\cdot\boldsymbol{1}_{6\times 1}\in\mathfrak{se}(3), 
\quad
\text{and}
\quad
5\times10^{-2}\cdot\boldsymbol{1}_{6\times 1}\in\mathfrak{se}(3)
\end{equation*}
for the Type-I stochastic integrator, or equivalently by the stochastic potentials
\begin{equation*}
\Phi(g,\mu) = \int_{0}^{S}\left<\mu, 10^{-2}\cdot\boldsymbol{1}_{6\times 1}\right>ds,
\quad
\int_{0}^{S}\left<\mu, 3\times 10^{-2}\cdot\boldsymbol{1}_{6\times 1}\right> ds,
\quad
\text{and}
\quad
\int_{0}^{S}\left<\mu, 5\times 10^{-2}\cdot\boldsymbol{1}_{6\times 1}\right> ds
\end{equation*}
for the Type-II stochastic integrator. The resulting motion of the spaghetti is illustrated in the first row of Fig.~\ref{flying-spaghetti}, showing large deformations and rotations. The second row of Fig.~\ref{flying-spaghetti} shows that the proposed integrators exhibit excellent performance in preserving the Lie group structure of the beam in both deterministic and stochastic settings, with the accuracy order of $\mathcal{O}(10^{-13})$.

\begin{figure*}[pos=htpb]
    \begin{subfigure}[t]{0.55\textwidth}
    \centering
    \begin{minipage}[t]{0.05\textwidth}  
        \vspace{1.1cm}
        \rotatebox{90}{\textbf{Deterministic}}
    \end{minipage}
    \begin{minipage}[t]{0.92\textwidth}   
        \centering
        \caption*{\hspace{1.8em}Trajectories of the flying spaghetti}
        \includegraphics[width=\textwidth]{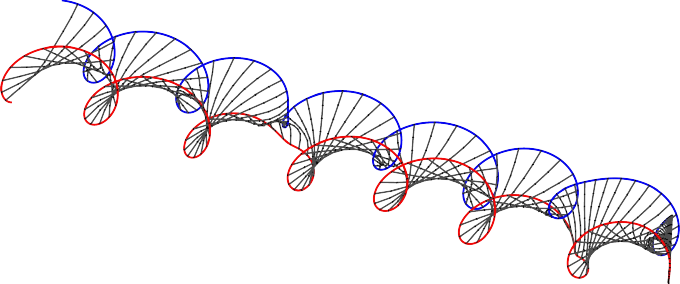}
    \end{minipage}
    \end{subfigure}
    \hfill
    \begin{subfigure}[t]{0.4\textwidth}
        \caption*{Preservation in Lie group SE(3) structure}
        \centering
        \includegraphics[width=\textwidth]{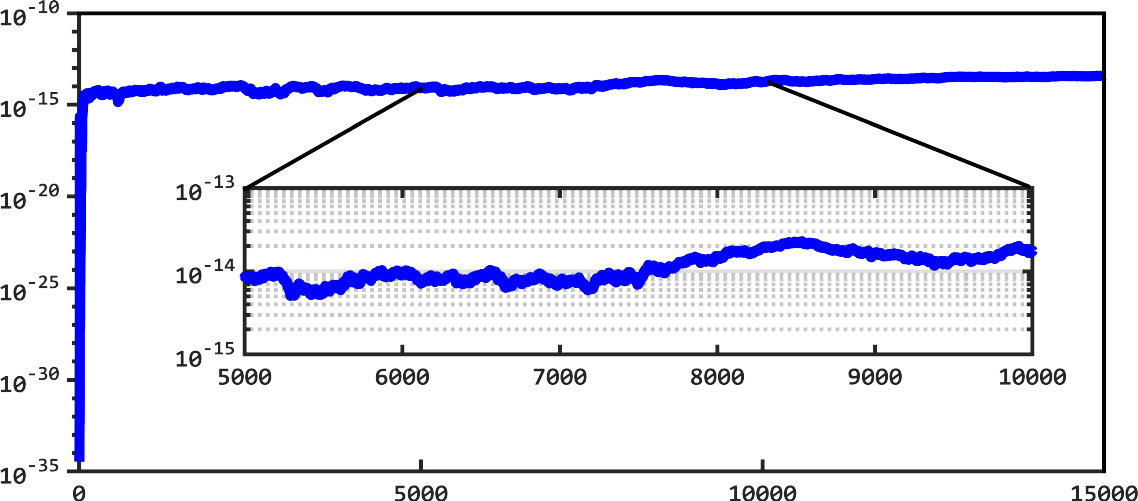}
    \end{subfigure}

    \vspace{0.8em}
    \begin{subfigure}[t]{0.55\textwidth}
    \centering
    \begin{minipage}[t]{0.05\textwidth}  
        \vspace{-3.4cm}
        \rotatebox{90}{\textbf{Sto.:} $\boldsymbol{\Theta} = 10^{-2}\cdot\boldsymbol{1}_{6\times 1}$}
    \end{minipage}
    \begin{minipage}[t]{0.92\textwidth}   
        \centering
        \includegraphics[width=\textwidth]{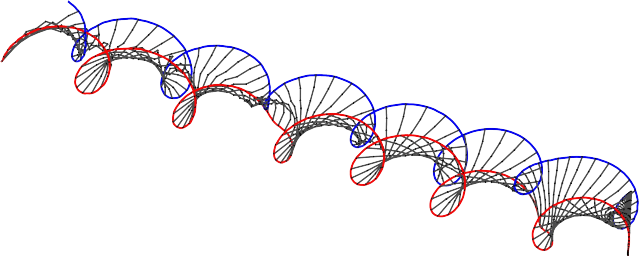}
    \end{minipage}
    \end{subfigure}
    \hfill
    \begin{subfigure}[t]{0.4\textwidth}
        \centering
        \includegraphics[width=\textwidth]{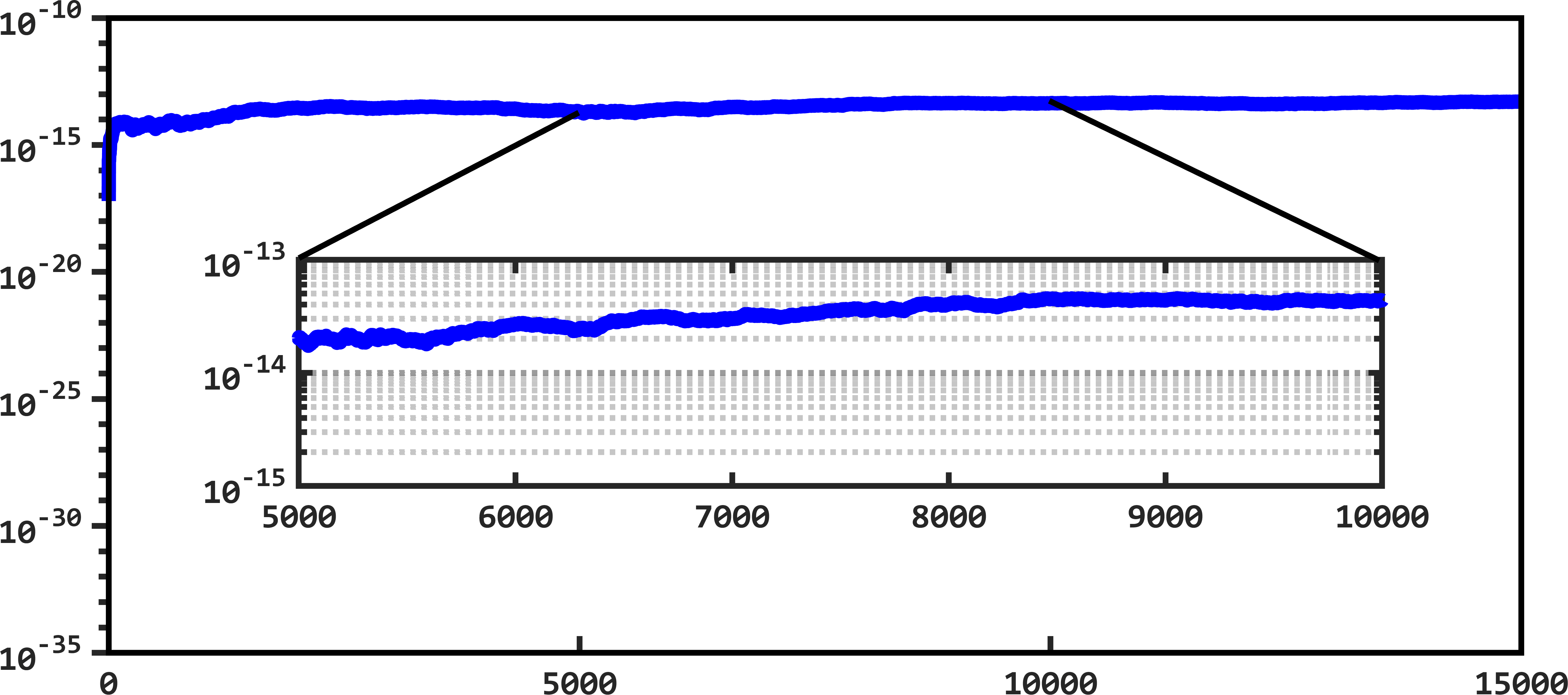}
    \end{subfigure}

    \vspace{0.8em}
    \begin{subfigure}[t]{0.55\textwidth}
    \centering
    \begin{minipage}[t]{0.05\textwidth}  
        \vspace{-3.6cm}
        \rotatebox{90}{\textbf{Sto.:} $\boldsymbol{\Theta} = 3\times 10^{-2}\cdot\boldsymbol{1}_{6\times 1}$}
    \end{minipage}
    \begin{minipage}[t]{0.92\textwidth}   
        \centering
        \includegraphics[width=\textwidth]{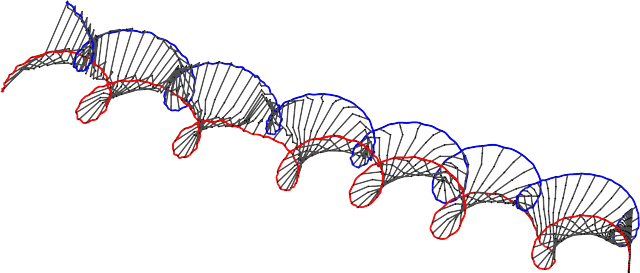}
    \end{minipage}
    \end{subfigure}
    \hfill
    \begin{subfigure}[t]{0.4\textwidth}
        \centering
        \includegraphics[width=\textwidth]{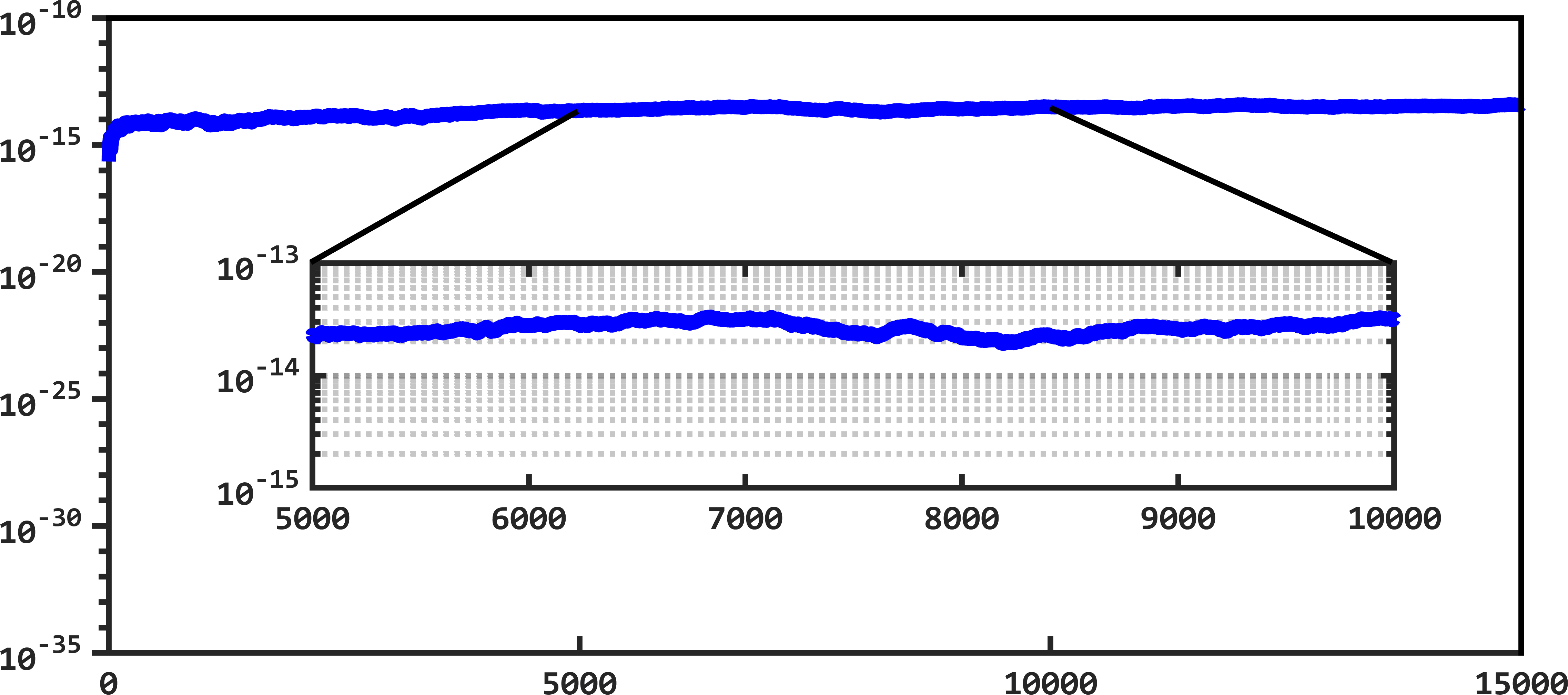}
    \end{subfigure}

    \vspace{0.8em}
    \begin{subfigure}[t]{0.55\textwidth}
    \centering
    \begin{minipage}[t]{0.05\textwidth}  
        \vspace{-3.5cm}
        \rotatebox{90}{\textbf{Sto.:} $\boldsymbol{\Theta} = 5\times 10^{-2}\cdot\boldsymbol{1}_{6\times 1}$}
    \end{minipage}
    \begin{minipage}[t]{0.92\textwidth}   
        \centering
        \includegraphics[width=\textwidth]{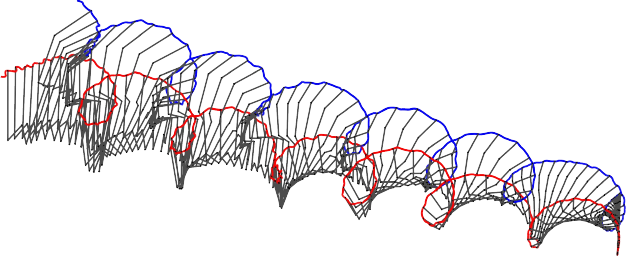}
    \end{minipage}
    \end{subfigure}
    \hfill
    \begin{subfigure}[t]{0.4\textwidth}
        \centering
        \includegraphics[width=\textwidth]{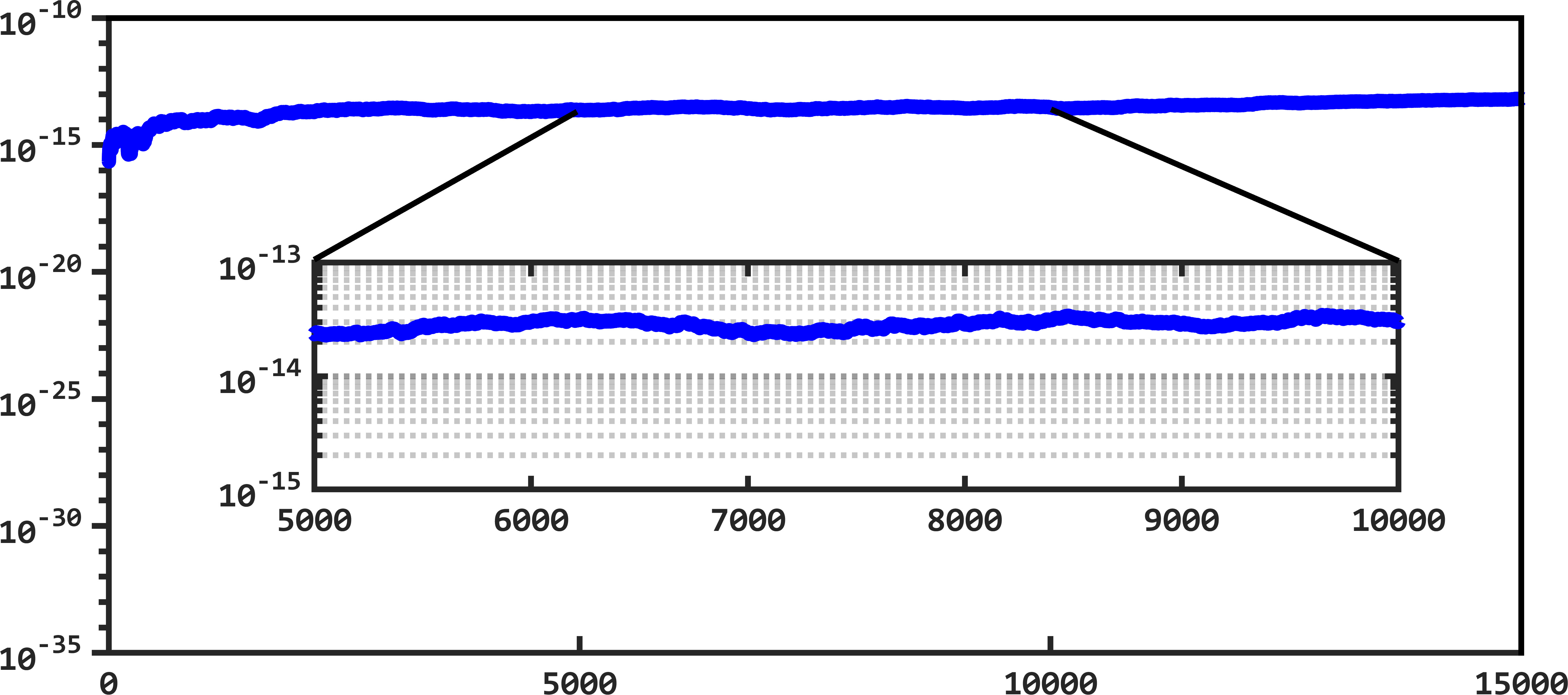}
    \end{subfigure}
    
    \caption{\textbf{Left}: Trajectories of the stochastic flying spaghetti. \textbf{Right}: Structure-preserving property of the proposed stochastic integrators.}
    \label{flying-spaghetti}
\end{figure*}

\section{Conclusions}
This work presents variational formulations for modeling stochastic geometrically exact beams, in both continuous- and discrete-time settings. The key ingredient of this approach is to first unify perturbations and system dynamics from a variational perspective, and then construct the associated stochastic geometric integrators by discretizing variational principles. By doing so, the obtained two types of stochastic integrators exhibit structure-preserving properties, such as nearly energy conservation, momentum preservation, and Lie group structure preservation. The effectiveness of the proposed approach is demonstrated with various examples. Furthermore, the proposed integrators are shown to be stochastic extensions of the well-known staggered leapfrog method and Yee's scheme in specific cases. However, there are limitations of the proposed approach. In their current form, the proposed integrators are applicable to stochastic unconstrained field-theoretic systems with a single spatial variable. Future research will focus on removing the above limitations, extending the framework to stochastic integrators for higher-dimensional field-theoretic systems, and incorporating constraints to account for material perturbations of constrained deformable bodies.

\printcredits

\appendix
\section{Proof of Variations \eqref{var_zeta_2} and \eqref{var_gamma_2}}\label{App_var}
\begin{proof}
It follows from \eqref{dis_kin_eq} that
\begin{align*}
\Delta t\zeta^{n+1/2}_{j} + \Theta^{n+1/2}_{j}
\left(W^{(M)}_{n+1,j} - W^{(M)}_{n,j}\right)
=
\tau^{-1}\left(\left(g^{n}_{j}\right)^{-1}g^{n+1}_{j}\right).
\end{align*}
Taking variations on both sides and using the definition of the right trivialized derivative yield
\begin{align*}
\Delta t\zeta^{n+1/2}_{j}
&=
\delta \left[
\tau^{-1}\left(\left(g^{n}_{j}\right)^{-1}g^{n+1}_{j}\right)
\right]  \\
&= D\tau^{-1}\left(\left(g^{n}_{j}\right)^{-1}g^{n+1}_{j}\right)\cdot
\delta\left(\left(g^{n}_{j}\right)^{-1}g^{n+1}_{j}\right)  \\
&= \mathrm{d}_{\tau^{-1}\left(\left(g^{n}_{j}\right)^{-1}g^{n+1}_{j}\right)}\tau^{-1}
\left(
\delta\left(\left(g^{n}_{j}\right)^{-1}g^{n+1}_{j}\right)\cdot
\left(g^{n+1}_{j}\right)^{-1}g^{n}_{j}
\right),
\end{align*}
with
\begin{align*}
\delta\left(\left(g^{n}_{j}\right)^{-1}g^{n+1}_{j}\right)\cdot
\left(g^{n+1}_{j}\right)^{-1}g^{n}_{j}
&= - \left(g^{n}_{j}\right)^{-1}\delta g^{n}_{j}
+ \mathrm{Ad}^{*}_{\left(g^{n}_{j}\right)^{-1}g^{n+1}_{j}}\left(g^{n+1}_{j}\right)^{-1}\delta g^{n+1}_{j}  \\
&= - \eta^{n}_{j}
+ \mathrm{Ad}^{*}_{\left(g^{n}_{j}\right)^{-1}g^{n+1}_{j}}\eta^{n+1}_{j},
\end{align*}
where the last equality follows from the definition of $\eta^{n}_{j}$. Discrete variation \eqref{var_gamma_2} can be proved in a similar way.
\end{proof}

\section{Proof of Theorem~\ref{theorem_exact_integrator}}\label{proof_exact_integrator}
\begin{proof}
Taking the variation of the discrete action sum \eqref{discrete_act_sum_1} and using Prop.~\ref{exact_variations} yield
\begin{align*}
\delta A_{d} &= \Delta t\Delta s\sum_{n=0}^{N-1}\sum_{j=0}^{J-1}
\left[
\left<\frac{\delta\ell}{\delta g}\left(g^{n}_{j},\zeta^{n+1/2}_{j},\gamma^{n}_{j+1/2}\right),
\delta g^{n}_{j}\right>
+ \left<\frac{\delta\ell}{\delta \zeta}\left(g^{n}_{j},\zeta^{n+1/2}_{j},\gamma^{n}_{j+1/2}\right), \delta \zeta^{n+1/2}_{j}\right>  \right.\\
&\qquad\qquad\qquad\qquad\left.
+ \left<\frac{\delta\ell}{\delta \gamma}\left(g^{n}_{j},\zeta^{n+1/2}_{j},\gamma^{n}_{j+1/2}\right), \delta \gamma^{n}_{j+1/2}\right>
\right]  \\
&= \sum_{n=0}^{N-1}\sum_{j=0}^{J-1}
\left[
\Delta s\Delta t\left<T_{e}^{\ast}L_{g^{n}_{j}}\frac{\delta\ell}{\delta g}\left(g^{n}_{j},\zeta^{n+1/2}_{j},\gamma^{n}_{j+1/2}\right),
\eta^{n}_{j}\right>  \right.\\
&\qquad\qquad\quad
+ \Delta s\left<\frac{\delta\ell}{\delta \zeta}\left(g^{n}_{j},\zeta^{n+1/2}_{j},\gamma^{n}_{j+1/2}\right), 
\mathrm{d}_{\tau^{-1}\left(\left(g^{n}_{j}\right)^{-1}g^{n+1}_{j}\right)}\tau^{-1}
\cdot
\left(
-\eta^{n}_{j} + \mathrm{Ad}_{\left(g^{n}_{j}\right)^{-1}g^{n+1}_{j}}\eta^{n+1}_{j}
\right)\right>  \\
&\qquad\qquad\quad\left.
+
\Delta t\left<\frac{\delta\ell}{\delta \gamma}\left(g^{n}_{j},\zeta^{n+1/2}_{j},\gamma^{n}_{j+1/2}\right), 
\mathrm{d}_{\tau^{-1}\left(\left(g^{n}_{j}\right)^{-1}g^{n}_{j+1}\right)}\tau^{-1}
\cdot
\left(
-\eta^{n}_{j} + \mathrm{Ad}_{\left(g^{n}_{j}\right)^{-1}g^{n}_{j+1}}\eta^{n}_{j+1}
\right)\right>
\right]  \\
&= \sum_{n=1}^{N-1}\sum_{j=1}^{J-1}
\left[
\Delta s\Delta t\left<
T_{e}^{*}L_{g^{n}_{j}}\frac{\delta\ell}{\delta g}\left(g^{n}_{j},\zeta^{n+1/2}_{j},\gamma^{n}_{j+1/2}\right),
\eta_{j}^{n}
\right>\right.  \\
&\qquad\qquad\quad
+ \Delta s\left<
-\left[\mathrm{d}_{\tau^{-1}\left(\left(g^{n}_{j}\right)^{-1}g^{n+1}_{j}\right)}\tau^{-1}\right]^{*}
\frac{\delta\ell}{\delta \zeta}\left(g^{n}_{j},\zeta^{n+1/2}_{j},\gamma^{n}_{j+1/2}\right),
\eta_{j}^{n}
\right>  \\
&\qquad\qquad\quad
+ \Delta s\left<
\mathrm{Ad}^{*}_{\left(g^{n-1}_{j}\right)^{-1}g^{n}_{j}}
\left[\mathrm{d}_{\tau^{-1}\left(\left(g^{n-1}_{j}\right)^{-1}g^{n}_{j}\right)}\tau^{-1}\right]^{*}
\frac{\delta\ell}{\delta \zeta}\left(g^{n-1}_{j},\zeta^{n-1/2}_{j},\gamma^{n-1}_{j+1/2}\right),
\eta_{j}^{n}
\right>  \\
&\qquad\qquad\quad
+ \Delta t\left<
-\left[\mathrm{d}_{\tau^{-1}\left(\left(g^{n}_{j}\right)^{-1}g^{n}_{j+1}\right)}\tau^{-1}\right]^{*}
\frac{\delta\ell}{\delta \gamma}\left(g^{n}_{j},\zeta^{n+1/2}_{j},\gamma^{n}_{j+1/2}\right),
\eta_{j}^{n}
\right>  \\
&\qquad\qquad\quad\left.
+ \Delta t\left<
\mathrm{Ad}^{*}_{\left(g^{n}_{j-1}\right)^{-1}g^{n}_{j}}
\left[\mathrm{d}_{\tau^{-1}\left(\left(g^{n}_{j-1}\right)^{-1}g^{n}_{j}\right)}\tau^{-1}\right]^{*}
\frac{\delta\ell}{\delta \zeta}\left(g^{n}_{j-1},\zeta^{n+1/2}_{j-1},\gamma^{n}_{j-1/2}\right),
\eta_{j}^{n}
\right>  
\right]  \\
&\quad
+\sum_{j=0}^{J-1}
\Delta s\left<
-\left[\mathrm{d}_{\tau^{-1}\left(\left(g^{0}_{j}\right)^{-1}g^{1}_{j}\right)}\tau^{-1}\right]^{*}
\frac{\delta\ell}{\delta \zeta}\left(g^{0}_{j},\zeta^{1/2}_{j},\gamma^{0}_{j+1/2}\right),
\eta_{j}^{0}
\right>  \\
&\quad
+ \sum_{j=0}^{J-1}
\Delta s\left<
\mathrm{Ad}^{*}_{\left(g^{N-1}_{j}\right)^{-1}g^{N}_{j}}
\left[\mathrm{d}_{\tau^{-1}\left(\left(g^{N-1}_{j}\right)^{-1}g^{N}_{j}\right)}\tau^{-1}\right]^{*}
\frac{\delta\ell}{\delta \zeta}\left(g^{N-1}_{j},\zeta^{N-1/2}_{j},\gamma^{N-1}_{j+1/2}\right),
\eta_{j}^{N}
\right>  \\
&\quad
+\sum_{n=1}^{N-1}
\Delta t\left<
-\left[\mathrm{d}_{\tau^{-1}\left(\left(g^{n}_{0}\right)^{-1}g^{n}_{1}\right)}\tau^{-1}\right]^{*}
\frac{\delta\ell}{\delta\gamma}\left(g^{n}_{0},\zeta^{n+1/2}_{0},\gamma^{n}_{1/2}\right),
\eta^{n}_{0}
\right>  \\
&\quad
+ \sum_{n=1}^{N-1}
\Delta t\left<
\mathrm{Ad}^{*}_{\left(g^{n}_{J-1}\right)^{-1}g^{n}_{J}}
\left[\mathrm{d}_{\tau^{-1}\left(\left(g^{n}_{J-1}\right)^{-1}g^{n}_{J}\right)}\tau^{-1}\right]^{*}
\frac{\delta\ell}{\delta\gamma}\left(g^{n}_{J-1},\zeta^{n+1/2}_{J-1},\gamma^{n}_{J-1/2}\right),
\eta_{J}^{n}
\right>.
\end{align*}
From the fixed point condition $\eta^{0}_{j} = \eta^{N}_{j} = 0$ for $j=1,\dots,J$, we obtain the discrete equations \eqref{without_truncation_exact}
and the discrete boundary conditions \eqref{Exact_bc}.
\end{proof}

\section{Proof of Proposition~\ref{app_var_formulae}}\label{app_variation}
\begin{proof}
Recall that the adjoint action map admits a series expansion \citep{hall2015lie}
\begin{equation*}
\mathrm{Ad}_{\tau(X)} = I + \mathrm{ad}_{X} + \frac{1}{2}\mathrm{ad}^{2}_{X} + \mathcal{O}(|X|^{3}),
\end{equation*}
where $\tau$ is the exponential map.
Thus, in the exact variation formula \eqref{var_zeta_2} the term
\begin{align*}
&-\eta^{n}_{j} + \mathrm{Ad}_{\left(g^{n}_{j}\right)^{-1}g^{n+1}_{j}}\eta^{n+1}_{j}  \\
&= -\eta^{n}_{j} + \mathrm{Ad}_{\tau\left(\Delta t\zeta^{n+1/2}_{j} + \Theta^{n+1/2}_{j}
\left(
W^{(M)}_{n+1,j} - W^{(M)}_{n,j}
\right)
\right)}\eta^{n+1}_{j}  \\
&= -\eta^{n}_{j} + \eta^{n+1}_{j} 
+ \mathrm{ad}_{\Delta t\zeta^{n+1/2}_{j} + \Theta^{n+1/2}_{j}
\left(
W^{(M)}_{n+1,j} - W^{(M)}_{n,j}
\right)}\eta^{n+1}_{j}
+ \frac{1}{2}\mathrm{ad}^{2}_{\Delta t\zeta^{n+1/2}_{j} + \Theta^{n+1/2}_{j}
\left(
W^{(M)}_{n+1,j} - W^{(M)}_{n,j}
\right)}\eta^{n+1}_{j}
+ \mathcal{O}\left(\Delta t^{3/2}\right),
\end{align*}
where the first equality uses \eqref{dis_kin_eq}. On the other hand, since the inverse right trivialized tangent admits an expansion \citep{Iserles2000lie,hairer2006,Bou_Rabee_Marsden_2009}
\begin{equation*}
\mathrm{d}_{X}\tau^{-1} = I - \frac{1}{2}\mathrm{ad}_{X} + \mathcal{O}\left(|X|^{2}\right),
\end{equation*} 
the exact variation formula \eqref{var_zeta_2} becomes
\begin{align*}
\delta \left[\tau^{-1}\left(\left(g^{n}_{j}\right)^{-1}g^{n+1}_{j}\right)\right]
&= \delta\zeta^{n+1/2}_{j}\Delta t 
= \mathrm{d}_{\tau^{-1}\left(\left(g^{n}_{j}\right)^{-1}g^{n}_{j+1}\right)}\tau^{-1}
\cdot
\left(
-\eta^{n}_{j} + \mathrm{Ad}_{\left(g^{n}_{j}\right)^{-1}g^{n+1}_{j}}\eta^{n+1}_{j}
\right)  \\
\nonumber &=: \mathrm{d}_{V}\tau^{-1}\cdot A  \\
\nonumber &= A - \frac{1}{2}\mathrm{ad}_{V}A + \mathcal{O}\left(\Delta t^{3/2}\right)  \\
\nonumber &= A - \frac{1}{2}\mathrm{ad}_{V}
\left(
- \eta^{n}_{j} + \eta^{n+1}_{j} + \mathrm{ad}_{V}\eta^{n+1}_{j}
\right)
+ \mathcal{O}(\Delta t^{3/2})  \\
&= \eta^{n+1}_{j} - \eta^{n}_{j} + \mathrm{ad}_{V}\eta^{n+1/2}_{j} + \mathcal{O}\left(\Delta t^{3/2}\right).
\label{d_tau_inv_2}
\end{align*}
Similarly, for the spatial part we have
\begin{align*}
\delta \left[\tau^{-1}\left(\left(g^{n}_{j}\right)^{-1}g^{n}_{j+1}\right)\right] 
&= 
\delta\gamma^{n}_{j+1/2}\Delta s
=
\mathrm{d}_{\tau^{-1}\left(\left(g^{n}_{j}\right)^{-1}g^{n}_{j+1}\right)}\tau^{-1}
\cdot
\left(
-\eta^{n}_{j} + \mathrm{Ad}_{\left(g^{n}_{j}\right)^{-1}g^{n}_{j+1}}\eta^{n}_{j+1}
\right)  \\
&= \eta^{n}_{j+1} - \eta^{n}_{j}
+ \mathrm{ad}_{\tau^{-1}\left(\left(g^{n}_{j}\right)^{-1}g^{n}_{j+1}\right)}\eta^{n}_{j+1/2} 
+ \mathcal{O}\left(\Delta s^{2}\right).
\end{align*}
The proof is completed.
\end{proof}

\section{Proof of Theorem~\ref{Theorem_App_Int}}\label{Proof_App_Int}
\begin{proof}
Taking the variation of the discrete action sum \eqref{discrete_act_sum_1} and using the approximate variation formulae \eqref{app_variation_formula_1}, we have
\begin{align*}
\delta A_{d} &= \sum_{n=0}^{N-1}\sum_{j=0}^{J-1}
\left[
\Delta t\Delta s\left<
\frac{\delta\ell}{\delta g}\left(g^{n}_{j},\zeta^{n+1/2}_{j},\gamma^{n}_{j+1/2}\right),
\delta g^{n}_{j}
\right>
\right. \\
&\qquad\qquad\quad
+ \Delta s\left<
\frac{\delta\ell}{\delta\zeta}\left(g^{n}_{j},\zeta^{n+1/2}_{j},\gamma^{n}_{j+1/2}\right),
\eta^{n+1}_{j} - \eta^{n}_{j}
+ \mathrm{ad}_{
\tau^{-1}\left(\left(g^{n}_{j}\right)^{-1}g^{n+1}_{j}\right)
}
\eta^{n+1/2}_{j}
\right>  \\
&\qquad\qquad\quad \left.
+ \Delta t\left<
\frac{\delta\ell}{\delta\gamma}\left(g^{n}_{j},\zeta^{n+1/2}_{j},\gamma^{n}_{j+1/2}\right),
\eta^{n}_{j+1} - \eta^{n}_{j}
+ \mathrm{ad}_{
\tau^{-1}\left(\left(g^{n}_{j}\right)^{-1}g^{n}_{j+1}\right)
}
\eta^{n}_{j+1/2}
\right>
\right]  \\
&= \sum_{n=1}^{N-1}\sum_{j=1}^{J-1}
\left[
\Delta t\Delta s\left<
T_{e}L_{g^{n}_{j}}^{\ast}
\frac{\delta\ell}{\delta g}\left(g^{n}_{j},\zeta^{n+1/2}_{j},\gamma^{n}_{j+1/2}\right),
\eta^{n}_{j}
\right>
\right.\\
&\qquad\qquad\quad
+ \Delta s\left<
\frac{\delta\ell}{\delta\zeta}\left(g^{n-1}_{j},\zeta^{n-1/2}_{j},\gamma^{n-1}_{j+1/2}\right)
- \frac{\delta\ell}{\delta\zeta}\left(g^{n}_{j},\zeta^{n+1/2}_{j},\gamma^{n}_{j+1/2}\right),
\eta^{n}_{j}
\right> \\
&\qquad\qquad\quad
+ \frac{\Delta s}{2}
\left<
\mathrm{ad}^{*}_{
\tau^{-1}\left(\left(g^{n}_{j}\right)^{-1}g^{n+1}_{j}\right)
} 
\frac{\delta\ell}{\delta\zeta}\left(g^{n}_{j},\zeta^{n+1/2}_{j},\gamma^{n}_{j+1/2}\right)
,\eta^{n}_{j}
\right> \\
&\qquad\qquad\quad
+ \frac{\Delta s}{2}
\left<
\mathrm{ad}^{*}_{
\tau^{-1}\left(\left(g^{n-1}_{j}\right)^{-1}g^{n}_{j}\right)
}
\frac{\delta\ell}{\delta\zeta}\left(g^{n-1}_{j},\zeta^{n-1/2}_{j},\gamma^{n-1}_{j+1/2}\right)
,\eta^{n}_{j}
\right> \\
&\qquad\qquad\quad
+
\Delta t
\left<
\frac{\delta\ell}{\delta\gamma}\left(g^{n}_{j-1},\zeta^{n+1/2}_{j-1},\gamma^{n}_{j-1/2}\right)
-
\frac{\delta\ell}{\delta\gamma}\left(g^{n}_{j},\zeta^{n+1/2}_{j},\gamma^{n}_{j+1/2}\right)
,\eta^{n}_{j}
\right> \\
&\qquad\qquad\quad
+
\frac{\Delta t}{2}
\left<
\mathrm{ad}^{*}_{
\tau^{-1}\left(\left(g^{n}_{j}\right)^{-1}g^{n}_{j+1}\right)
}
\frac{\delta\ell}{\delta\gamma}\left(g^{n}_{j},\zeta^{n+1/2}_{j},\gamma^{n}_{j+1/2}\right)
,\eta^{n}_{j}
\right> \\
&\qquad\qquad\quad \left. 
+
\frac{\Delta t}{2}
\left<
\mathrm{ad}^{*}_{
\tau^{-1}\left(\left(g^{n}_{j-1}\right)^{-1}g^{n}_{j}\right)
}
\frac{\delta\ell}{\delta\gamma}\left(g^{n}_{j-1},\zeta^{n+1/2}_{j-1},\gamma^{n}_{j-1/2}\right)
,\eta^{n}_{j}
\right>
\right]\\
&\quad
+ \sum_{j=0}^{J-1}\Delta s
\left(
\left<
\frac{\delta\ell}{\delta\zeta}\left(g^{N-1}_{j},\zeta^{N-1/2}_{j},\gamma^{N-1}_{j+1/2}\right),
\eta^{N}_{j}
\right>
-
\left<
\frac{\delta\ell}{\delta\zeta}\left(g^{0}_{j},\zeta^{1/2}_{j},\gamma^{0}_{j+1/2}\right),
\eta^{0}_{j}
\right>
\right)  \\
&\quad
+ \sum_{j=0}^{J-1}
\frac{\Delta s}{2}
\left<
\mathrm{ad}^{*}_{
\tau^{-1}\left(\left(g^{0}_{j}\right)^{-1}g^{1}_{j}\right)
}
\frac{\delta\ell}{\delta\zeta}\left(g^{0}_{j},\zeta^{1/2}_{j},\gamma^{0}_{j+1/2}\right)
,\eta^{0}_{j}
\right>  \\
&\quad
+ \sum_{j=0}^{J-1}
\frac{\Delta s}{2}
\left<
\mathrm{ad}^{*}_{
\tau^{-1}\left(\left(g^{N-1}_{j}\right)^{-1}g^{N}_{j}\right)
}
\frac{\delta\ell}{\delta\zeta}\left(g^{N-1}_{j},\zeta^{N-1/2}_{j},\gamma^{N-1}_{j+1/2}\right)
,\eta^{N}_{j}
\right>  \\
& \quad
+ \sum_{n=0}^{N-1}
\Delta t
\left(
\left<
\frac{\delta\ell}{\delta\gamma}\left(g^{n}_{J-1},\zeta^{n+1/2}_{J-1},\gamma^{n}_{J-1/2}\right),
\eta^{n}_{J}
\right>
-
\left<
\frac{\delta\ell}{\delta\gamma}\left(g^{n}_{0},\zeta^{n+1/2}_{0},\gamma^{n}_{1/2}\right),
\eta^{n}_{0}
\right>
\right)  \\
&\quad
+ \sum_{n=0}^{N-1}
\frac{\Delta t}{2}\left<
\mathrm{ad}^{*}_{
\tau^{-1}\left(\left(g^{n}_{0}\right)^{-1}g^{n}_{1}\right)
}
\frac{\delta\ell}{\delta\gamma}\left(g^{n}_{0},\zeta^{n+1/2}_{0},\gamma^{n}_{1/2}\right)
,\eta^{n}_{0}
\right>  \\
&\quad
+ \sum_{n=0}^{N-1}
\frac{\Delta t}{2}
\left<
\mathrm{ad}^{*}_{
\tau^{-1}\left(\left(g^{n}_{J-1}\right)^{-1}g^{n}_{J}\right)
}
\frac{\delta\ell}{\delta\gamma}\left(g^{n}_{J-1},\zeta^{n+1/2}_{J-1},\gamma^{n}_{J-1/2}\right)
,\eta^{n}_{J}
\right>,
\end{align*}
where we have rearranged the indices $n$ and $j$ in the second equality. By the fixed point condition $\eta^{0}_{j} = \eta^{N}_{j} = 0$ for $j=1,\dots,J$, we obtain the desired stochastic integrator \eqref{App_Integrator_1} and the discrete boundary condition \eqref{App_boun_con_1}.
\end{proof}

\section{Proof of Proposition~\ref{ene_evo}}\label{app_proof}
In order to prove Proposition~\ref{ene_evo}, we first recall a lemma on Lie-Poisson bracket.
\begin{lemma}[\cite{ChenShi2025}]\label{ChenShi2025}
Given a field-theoretic mechanical system with a $G$-invariant Lagrangian $\ell\left(\zeta,\gamma\right) = L(g,g\zeta,g\gamma)$, the Lie-Poisson bracket is given by
\begin{equation*}
\left\{f,g\right\}(\gamma,\mu)
=
\left<
\frac{\delta f}{\delta\gamma},
\frac{\partial}{\partial s}\frac{\delta g}{\delta\mu} 
+ \mathrm{ad}_{\gamma}\frac{\delta g}{\delta\mu}
\right>
+
\left<
\mathrm{ad}^{*}_{\frac{\delta g}{\delta\mu}}\mu
+ \frac{\partial}{\partial s}\frac{\delta g}{\delta\gamma}
- \mathrm{ad}^{*}_{\gamma}\frac{\delta g}{\delta\gamma},
\frac{\delta f}{\delta\mu}
\right>.
\end{equation*}
\end{lemma}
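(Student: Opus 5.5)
The plan is to obtain the bracket in Lemma~\ref{ChenShi2025} by affine Lie--Poisson reduction of the canonical field-theoretic Poisson bracket on the cotangent bundle of the space of $G$-valued curves $g:[0,S]\to G$. The reduction map is $(g,p)\mapsto(\gamma,\mu)=\bigl(g^{-1}\partial_s g,\;T_e^{*}L_g\,p\bigr)$. The $G$-invariance of $\ell$, and hence of $h$, means that $h$ depends on $(g,p)$ only through $(\gamma,\mu)$. For functionals $f(\gamma,\mu)$, $k(\gamma,\mu)$ (writing $k$ for the second argument to avoid a clash with the group variable), I consider their pullbacks $F=f\circ\pi$ and $K=k\circ\pi$. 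I then compute
\[
\{F,K\}_{\mathrm{can}}=\int_0^S\Bigl(\Bigl\langle\frac{\delta F}{\delta g},\frac{\delta K}{\delta p}\Bigr\rangle-\Bigl\langle\frac{\delta K}{\delta g},\frac{\delta F}{\delta p}\Bigr\rangle\Bigr)ds
\]
and express the result in terms of $(\gamma,\mu)$.

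\emph{Step 1 (fibre derivative).} Since $\mu$ is linear in $p$, I get $\delta F/\delta p=T_eL_g\,\delta f/\delta\mu$. Pairing with a left-trivialized variation $\eta=g^{-1}\delta g$ therefore turns every $\delta/\delta p$ into $\delta f/\delta\mu\in\mathfrak g$.

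\emph{Step 2 (base derivative).} Varying $g$ with $p$ fixed changes both reduced variables. The strain changes by $\delta\gamma=\partial_s\eta+\mathrm{ad}_\gamma\eta$; this is exactly \eqref{delta_gamma}. The momentum changes by $\delta\mu=-\mathrm{ad}^{*}_{\eta}\mu$, up to the sign fixed by the left-trivialization convention. Integrating the $\partial_s\eta$ term by parts then gives
\[
T_e^{*}L_g\frac{\delta F}{\delta g}=-\partial_s\frac{\delta f}{\delta\gamma}+\mathrm{ad}^{*}_{\gamma}\frac{\delta f}{\delta\gamma}+\mathrm{ad}^{*}_{\delta f/\delta\mu}\mu\;(\text{sign to be checked}),
\]
plus boundary terms at $s=0,S$. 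These boundary terms vanish under the free-end boundary conditions already used for \eqref{sto_Hamel}.

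\emph{Step 3 (assembly).} I substitute Steps 1--2 into the canonical bracket. I then move all derivatives off $f$ and onto $k$: the $\partial_s$ terms are transferred by a second integration by parts, and the $\mathrm{ad}_\gamma$ terms by the identity $\langle\mathrm{ad}^{*}_\gamma\alpha,\xi\rangle=\langle\alpha,\mathrm{ad}_\gamma\xi\rangle$. The result should group into the two pairings stated in the lemma. One pairing is $\langle\delta f/\delta\gamma,\ \partial_s\delta k/\delta\mu+\mathrm{ad}_\gamma\delta k/\delta\mu\rangle$. The other pairs $\delta f/\delta\mu$ with $\mathrm{ad}^{*}_{\delta k/\delta\mu}\mu+\partial_s\delta k/\delta\gamma-\mathrm{ad}^{*}_\gamma\delta k/\delta\gamma$. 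The $\mu$-$\mu$ cross terms should combine into the single term $\langle\mathrm{ad}^{*}_{\delta k/\delta\mu}\mu,\delta f/\delta\mu\rangle$, using antisymmetry of $\mathrm{ad}$.

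\emph{Step 4 (consistency check).} Taking $f=\gamma$ and $f=\mu$ pointwise, with $k=h$ and $\Phi=0$, should reproduce the drift parts of \eqref{sto_Hamel_Ham}--\eqref{sto_com_Ham}. These drifts are the Hamiltonian form of the deterministic equations \eqref{sto_Hamel} and \eqref{sto_com}, so this check confirms the signs.

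The main obstacle I expect is the sign and convention bookkeeping in Step 2: the sign of $\delta\mu$ under left trivialization, together with the boundary terms arising from the two integrations by parts. If the signs do not close directly, the fallback is to reverse-engineer them from Step 4. That is, I would fix the bracket on the coordinate functionals $\gamma$ and $\mu$ so that it reproduces the known affine Lie--Poisson equations of the beam \citep{EllisGayBalmazHolmPutkaradzeRatiu2010}, and then extend to general $f$ by the Leibniz rule.
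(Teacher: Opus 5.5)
There is a genuine gap in Steps 2--3: the $\mu$--$\mu$ term comes out wrong by a factor of two, and no sign choice fixes it.

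The two-term bracket $\int(\langle\delta F/\delta g,\delta K/\delta p\rangle-\langle\delta K/\delta g,\delta F/\delta p\rangle)\,ds$ is valid only in canonical coordinates $(q^i,p_i)$. On a nonabelian group, ``vary $g$ with $p$ fixed'' has no intrinsic meaning. Holding $p_i$ fixed gives a chart-dependent $\delta\mu$, and the Maurer--Cartan structure equations force its antisymmetrized pairing to be exactly one commutator. So it is never $\delta\mu=\mp\mathrm{ad}^{*}_{\eta}\mu$. To see what your assumption does, suppose $\delta\mu=c\,\mathrm{ad}^{*}_{\eta}\mu$. Then the $\mu$-part of $\langle T_e^{*}L_g\,\delta F/\delta g,\eta\rangle$ is $c\langle\mu,[\eta,\delta f/\delta\mu]\rangle$, and the two cross terms of the canonical bracket give
\[
c\Bigl\langle\mu,\Bigl[\frac{\delta k}{\delta\mu},\frac{\delta f}{\delta\mu}\Bigr]\Bigr\rangle-c\Bigl\langle\mu,\Bigl[\frac{\delta f}{\delta\mu},\frac{\delta k}{\delta\mu}\Bigr]\Bigr\rangle=2c\Bigl\langle\mathrm{ad}^{*}_{\delta k/\delta\mu}\mu,\frac{\delta f}{\delta\mu}\Bigr\rangle .
\]
Antisymmetry of $\mathrm{ad}$ makes the two terms add, not merge. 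The lemma needs $c=\tfrac12$. Your $c=-1$ gives $-2\langle\mathrm{ad}^{*}_{\delta k/\delta\mu}\mu,\delta f/\delta\mu\rangle$, and $c=+1$ gives twice the correct term. Step~4 would then yield $d_t\mu=\mp 2\,\mathrm{ad}^{*}_{\delta h/\delta\mu}\mu\,dt+\cdots$, contradicting \eqref{sto_Hamel_Ham}. Since the error is a factor and not a sign, your fallback cannot repair it. Also, fixing the bracket on coordinate functionals so that it reproduces the known affine Lie--Poisson equations for all Hamiltonians imports the result rather than deriving it.

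The fix is to use the left-trivialized form of the canonical bracket on $T^{*}\mathcal{C}\cong\mathcal{C}\times\mathcal{F}([0,S],\mathfrak{g}^{*})$. There the Maurer--Cartan equation supplies a third term:
\[
\{F,K\}=\int_0^S\Bigl(\Bigl\langle T_e^{*}L_g\frac{\delta F}{\delta g},\frac{\delta K}{\delta\mu}\Bigr\rangle-\Bigl\langle T_e^{*}L_g\frac{\delta K}{\delta g},\frac{\delta F}{\delta\mu}\Bigr\rangle-\Bigl\langle\mu,\Bigl[\frac{\delta F}{\delta\mu},\frac{\delta K}{\delta\mu}\Bigr]\Bigr\rangle\Bigr)\,ds,
\]
with the $g$-derivatives taken at fixed $\mu$. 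A variation $\delta g=g\eta$ then changes only $\gamma$, so by \eqref{delta_gamma}
\[
\langle T_e^{*}L_g\,\delta F/\delta g,\eta\rangle=\int_0^S\langle\delta f/\delta\gamma,\partial_s\eta+\mathrm{ad}_\gamma\eta\rangle\,ds .
\]
Substituting this gives the lemma term by term:
\begin{itemize}
\item the first pairing comes out exactly, with no integration by parts;
\item the last term equals $\langle\mathrm{ad}^{*}_{\delta k/\delta\mu}\mu,\delta f/\delta\mu\rangle$;
\item one integration by parts in $-\langle\delta k/\delta\gamma,\partial_s\delta f/\delta\mu+\mathrm{ad}_\gamma\delta f/\delta\mu\rangle$ gives the remaining two terms.
\end{itemize}
That integration by parts leaves the boundary term $-\langle\delta k/\delta\gamma,\delta f/\delta\mu\rangle\big|_{s=0}^{s=S}$. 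It vanishes for $k=h$ under the free-end condition, but not for arbitrary functionals, so the lemma should be read modulo it.

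This also shows where your intuition came from. Holding the spatial momentum $\nu=T_e^{*}R_g p$ fixed does give $\delta\mu=+\mathrm{ad}^{*}_{\eta}\mu$. But in the variables $(g,\nu)$ the canonical bracket carries the extra term $\langle\nu,[\delta F/\delta\nu,\delta K/\delta\nu]\rangle=\langle\mu,[\delta f/\delta\mu,\delta k/\delta\mu]\rangle$, which cancels exactly half of the doubled contribution.
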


\paragraph{\textbf{Proof of Proposition~\ref{ene_evo}.}} Using the Stratonovich chain rule and \eqref{sto_Hamel_Ham}-\eqref{sto_com_Ham}, we have
\begin{align*}
d_{t}f(\gamma,\mu) &= \left<\frac{\delta f}{\delta\gamma},d_{t}\gamma\right> + \left<d_{t}\mu, \frac{\delta f}{\delta\mu}\right> \\
&=
\left<
\frac{\delta f}{\delta\gamma},
\left(
\frac{\partial}{\partial s}\frac{\delta h}{\delta\mu}
+ \mathrm{ad}_{\gamma}\frac{\delta h}{\delta\mu}
\right)dt
+ \left(
\mathrm{ad}_{\gamma}\frac{\delta\Phi}{\delta\mu}
+ \frac{\partial}{\partial s}\frac{\delta\Phi}{\delta\mu}
\right)\circ dW
\right>  \\
&\quad
+
\left<
\left(
\mathrm{ad}^{*}_{\frac{\delta h}{\delta\mu}}\mu
+ \frac{\partial}{\partial s}\frac{\delta h}{\delta\gamma}
- \mathrm{ad}^{*}_{\gamma}\frac{\delta h}{\delta\gamma}
\right)dt
+ \mathrm{ad}^{*}_{\frac{\delta\Phi}{\delta\mu}}\mu\circ dW,
\frac{\delta f}{\delta\mu}
\right>  \\
&=
\left(
\left<
\frac{\delta f}{\delta\gamma},
\frac{\partial}{\partial s}\frac{\delta h}{\delta\mu} 
+ \mathrm{ad}_{\gamma}\frac{\delta h}{\delta\mu}
\right>
+
\left<
\mathrm{ad}^{*}_{\frac{\delta h}{\delta\mu}}\mu
+ \frac{\partial}{\partial s}\frac{\delta h}{\delta\gamma}
- \mathrm{ad}^{*}_{\gamma}\frac{\delta h}{\delta\gamma},
\frac{\delta f}{\delta\mu}
\right>
\right)dt  \\
&\quad
+
\left(
\left<
\frac{\delta f}{\delta\gamma},
\mathrm{ad}_{\gamma}\frac{\delta\Phi}{\delta\mu}
+ \frac{\partial}{\partial s}\frac{\delta\Phi}{\delta\mu}
\right>
+
\left<
\mathrm{ad}^{*}_{\frac{\delta\Phi}{\delta\mu}}\mu,
\frac{\delta f}{\delta\mu}
\right>
\right)\circ dW  \\
&=
\left\{f,h\right\}dt + \left\{f,\Phi\right\}\circ dW,
\end{align*}
where the last equality follows from Lemma~\ref{ChenShi2025}.

To prove the converse, we take $f=\gamma$ and $f=\mu$ in \eqref{SLP}, respectively. Then, it is straightforward to obtain the stochastic field-theoretic equation \eqref{sto_Hamel_Ham} and the stochastic compatibility condition \eqref{sto_com_Ham} in Hamiltonian forms.  \hfill $\Box$

\subsubsection*{Acknowledgments}
We are grateful to Dmitry Zenkov (NCSU) for fruitful discussions during this work. M. Du and D. Shi were partially supported by NSFC 12272037 and 12232009. F. Gay-Balmaz was supported by a startup grant from Nanyang Technological University.

\bibliographystyle{cas-model2-names}
\bibliography{cas-refs}






\end{document}